\crefname{section}{Section}{Sections}
\crefname{subsection}{\S}{\S\S}
\crefname{subsubsection}{\S}{\S\S}
\theoremstyle{plain}
\newtheorem{lemma}{Lemma}[section]
\newtheorem{proposition}[lemma]{Proposition}
\newtheorem{corollary}[lemma]{Corollary}
\newtheorem{theorem}[lemma]{Theorem}
\theoremstyle{nonumberplain}
\newtheorem{theoremN}{Theorem}
\theoremstyle{plain}
\newtheorem{definition}[lemma]{Definition}
\newtheorem{example}[lemma]{Example}
\newtheorem{remark}[lemma]{Remark}
\newtheorem{remarks}[lemma]{Remarks}
\newtheorem{construction}[lemma]{Construction}
\crefname{definition}{definition}{definitions}
\crefname{ex}{example}{examples}
\crefname{exs}{example}{examples}
\crefname{remark}{remark}{remarks}
\crefname{remarks}{remark}{remarks}
\crefname{convention}{convention}{conventions}
\crefname{notation}{notation}{notations}
\crefname{table}{table}{tables}
\crefname{lemma}{lemma}{lemmas}
\crefname{proposition}{proposition}{propositions}
\crefname{corollary}{corollary}{corollaries}
\crefname{theorem}{theorem}{theorems}
\crefname{enumi}{}{}
\crefname{assumption}{assumption}{Assumptions}
\crefname{construction}{construction}{Constructions}
\crefname{equation}{}{}
\numberwithin{equation}{section}
\theoremstyle{nonumberplain}
\newtheorem{proof}{Proof}
\newcommand\pf[1]{\newtheorem{#1}{Proof of \Cref{#1}}}
\newcommand\bC{{\mathbb C}}
\newcommand\bQ{{\mathbb Q}}
\newcommand\bR{{\mathbb R}}
\newcommand\bS{{\mathbb S}}
\newcommand\bT{{\mathbb T}}
\newcommand\bZ{{\mathbb Z}}
\newcommand\cC{{\mathcal C}}
\newcommand\cD{{\mathcal D}}
\newcommand\cF{{\mathcal F}}
\newcommand\cH{{\mathcal H}}
\newcommand\cO{{\mathcal O}}
\newcommand\cP{{\mathcal P}}
\newcommand\cS{{\mathcal S}}
\newcommand\1{{\bf 1}}
\DeclareMathOperator{\id}{id}
\DeclareMathOperator{\rk}{rank}
\DeclareMathOperator{\End}{\mathrm{End}}
\DeclareMathOperator{\Hom}{\mathrm{Hom}}
\DeclareMathOperator{\Aut}{\mathrm{Aut}}
\newcommand{\cat}[1]{\textsc{#1}}
\newcommand{\qedhere}{\mbox{}\hfill\ensuremath{\blacksquare}}
\title{Non-degeneracy results for (multi-)pushouts of compact groups}
\author{Alexandru Chirvasitu}
\begin{document}

\date{}

\newcommand{\Addresses}{{
  \bigskip
  \footnotesize

  \textsc{Department of Mathematics, University at Buffalo}
  \par\nopagebreak
  \textsc{Buffalo, NY 14260-2900, USA}  
  \par\nopagebreak
  \textit{E-mail address}: \texttt{achirvas@buffalo.edu}


}}

\maketitle

\begin{abstract}
  We prove that embeddings of compact groups are equalizers, and a number of results on pushouts (and more generally, amalgamated free products) in the category of compact groups. Call a family of compact-group embeddings $H\le G_i$ algebraically sound if the corresponding group-theoretic pushout embeds in its Bohr compactification. We (a) show that a family of normal embeddings is algebraically sound in the sense that $G_i$ admit embeddings $G_i\le G$ into a compact group which agree on $H$; (b) give equivalent characterizations of coherently embeddable families of normal embeddings in representation-theoretic terms, via Clifford theory; (c) characterize those compact connected Lie groups $H$ for which all finite families of normal embeddings $H\trianglelefteq G_i$ are coherently embeddable (not having central 2-tori is a sufficient, but not necessary condition), and (d) show that families of split embeddings of compact groups are always algebraically sound.
\end{abstract}

\noindent {\em Key words: compact group; pushout; coproduct; amalgamated product; equalizer; Bohr compactification; Lie group; torus; Pontryagin dual; pseudometric; Tannaka-Krein duality}

\vspace{.5cm}

\noindent{MSC 2020: 22C05; 18A30; 22A05; 18A20; 22E46; 22D10; 54A10}

\tableofcontents

\section*{Introduction}

The paper revolves around a number of category-theoretic issues pertaining to compact groups. To begin motivating the discussion in a slightly roundabout fashion (in part in order to highlight some of the thematic links between the various problems), recall \cite[Proposition 9]{reid-epi}, to the effect that the {\it epimorphisms} in the category of compact groups are precisely the surjective morphisms.

Here, epimorphisms are those of \cite[Definition 7.39]{ahs}: morphisms $c\xrightarrow{f} d$ in a category such that $\Hom(d,-)\xrightarrow{\circ f}\Hom(c,-)$ is one-to-one. This is the natural rendition of surjectivity in purely category-theoretic terms, and the two notions compete in {\it concrete} \cite[Definition 5.1]{ahs} categories (i.e. categories of sets and functions: \cat{CGp}, the category of compact groups, is an example).

An analogous result holds in the category of unital $C^*$-algebras (to give another example of an analytic/topological structure that is interesting category-theoretically), by either \cite[Proposition 2]{reid-epi} or \cite[Corollary 4]{hn-epi}. The latter source, in particular, deduces that result the stronger statement \cite[Theorem 6]{hn-epi} that embeddings of $C^*$-algebras are {\it equalizers} \cite[Definition 7.51]{ahs}. Circling back to compact groups, this is the first result below (\Cref{th:cpcteq}):

\begin{theoremN}
  In the category \cat{CGp} of compact groups, the equalizers are precisely the embeddings.  \qedhere
\end{theoremN}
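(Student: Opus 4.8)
The plan is to establish the two inclusions between ``equalizer'' and ``embedding'' separately. \emph{Equalizers are embeddings.} Given morphisms $f,g\colon G\to H$ in $\cat{CGp}$, their equalizer is the closed subgroup $E=\{x\in G\mid f(x)=g(x)\}$ with the subspace topology and the inclusion into $G$: it is closed because $H$ is Hausdorff and $f,g$ are continuous, a subgroup because $f,g$ are homomorphisms, and the universal property against a test object is immediate since the subspace topology is the initial one. An inclusion of a closed subgroup of a compact group is an embedding, so this direction is purely formal. Conversely, a continuous injection of a compact group into a Hausdorff group is a closed embedding, so it remains only to show that an arbitrary inclusion $K\le G$ of a closed subgroup is an equalizer.

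\emph{Reduction to a separation statement.} It is enough to find, for each $x\in G\setminus K$, a compact group $L_x$ together with continuous homomorphisms $\alpha_x,\beta_x\colon G\to L_x$ that agree on $K$ but satisfy $\alpha_x(x)\neq\beta_x(x)$: then $H:=\prod_{x\in G\setminus K}L_x$ is compact, the homomorphisms $f:=(\alpha_x)_x$ and $g:=(\beta_x)_x$ into $H$ agree precisely on $K$, and $K\le G$ is the equalizer of $f$ and $g$.

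\emph{The crux: detecting $x$ finite-dimensionally and relative to $K$.} I claim that for $x\notin K$ there is a finite-dimensional unitary representation $\pi\colon G\to U(V)$ with $\pi(x)\notin\operatorname{span}_{\bC}\pi(K)$; note that, $\pi(K)$ being a group of unitaries, this span is already a unital $*$-subalgebra of $\End(V)$ and hence equals the von Neumann algebra $\pi(K)''$ it generates. To build $\pi$, let $G$ act on $C(G/K)$ by left translation; averaging matrix coefficients over right translates by $K$ shows that the finite-dimensional invariant subspaces of $C(G/K)$ are dense, so their union separates the points of $G/K$ and contains the constants. Choose such a subspace $V$ containing $\mathbf 1$ and separating $eK$ from $x^{-1}K$ (these are distinct because $x\notin K$), equip it with an invariant inner product, and let $\pi$ be the resulting representation. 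If $\pi(x)=\sum_i c_i\pi(k_i)$ with $k_i\in K$, then applying both sides to $\mathbf 1$ forces $\sum_i c_i=1$, and then applying both sides to an arbitrary $\psi\in V$ and evaluating at $eK$ — using that the evaluation functional at $eK$ is $K$-invariant — yields $\psi(x^{-1}K)=\psi(eK)$ for all $\psi\in V$, contradicting the choice of $V$.

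\emph{From the crux to the conclusion, and the main obstacle.} Since $\pi(x)\notin\pi(K)''=(\pi(K)')'$, some element of the commutant $\pi(K)'$ does not commute with $\pi(x)$; as $\pi(K)'$ is a finite-dimensional $C^*$-algebra, it is spanned by its unitaries, so that element may be taken to be a unitary $u$. Then $\alpha_x:=\pi$ and $\beta_x:=u\,\pi(-)\,u^{*}$ are continuous homomorphisms $G\to U(V)$ into a compact group, agree on $K$ (since $u$ commutes with $\pi(K)$), and differ at $x$; feeding these into the reduction completes the proof. The one genuinely non-formal step is the crux: compactness of the codomain forbids using the infinite-dimensional quasi-regular representation on $L^2(G/K)$ directly, so ``$x\notin K$'' must be captured inside a finite-dimensional representation, and the workable way to do this is the algebraic reformulation $\pi(x)\notin\operatorname{span}\pi(K)$, which the $K$-invariant evaluation functional on $C(G/K)$ is precisely designed to witness. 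Everything else — the reduction, the conjugation trick, and the easy inclusion — is routine.
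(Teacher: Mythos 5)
Your proof is correct, and it shares the paper's overall skeleton: reduce to separating each $x\in G\setminus K$ by a pair of morphisms agreeing on $K$, and produce that pair as $\pi$ and its conjugate by a unitary in the commutant of $\pi(K)$ that fails to commute with $\pi(x)$. Where you genuinely diverge is in how the representation $\pi$ and the witnessing unitary are found. The paper first reduces (via Frobenius reciprocity) to the case where $G$ is topologically generated by $H$ and $x$, and then invokes Tannaka--Krein duality: properness of $H\le G$ forces either an irreducible $G$-representation that restricts reducibly to $H$, or two non-isomorphic irreducibles that collide on $H$; either way $C_{U(n)}(\varphi(H))\supsetneq C_{U(n)}(\varphi(G))$. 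You instead work with the quasi-regular representation on $C(G/K)$, extract a finite-dimensional invariant subspace $V$ containing the constants and separating $eK$ from $x^{-1}K$, and use the $K$-invariant evaluation functional at $eK$ to show directly that $\pi(x)\notin\operatorname{span}_{\bC}\pi(K)=\pi(K)''$. Your route is more elementary (Peter--Weyl for $C(G/K)$ plus the finite-dimensional bicommutant theorem, rather than Tannaka--Krein) and more pointed: it targets the specific element $x$, so it needs neither the generation reduction nor induced representations. What the paper's argument buys in exchange is a cleaner structural dichotomy about how restriction to a proper subgroup must degrade the representation ring, which is in the spirit of the Tannakian theme running through the rest of that section. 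You also spell out the easy direction (equalizers in \cat{CGp} are closed subgroups, hence embeddings), which the paper leaves implicit; that is a harmless and welcome addition.
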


In fact, an embedding $\iota:H\le G$ will be the equalizer of a {\it universal} pair of morphisms: the two structure maps $G\to G\coprod_HG$ into the {\it pushout} \cite[Definition 11.30]{ahs} of two copies of $\iota$. Such pushouts, and in fact colimits of diagrams in $\cat{CGp}$ consisting of families of embeddings $H\le G_i$ ({\it multi-}pushouts, in the sequel) are the main topic under consideration.

Such a multi-pushout $\coprod_{H,\cat{CGp}}G_i$ in the category \cat{CGp} is nothing but the {\it Bohr compactification} \cite[\S 2.10]{kan-comm} of the usual {\it free product of $G_i$ amalgamated over $H$} (\cite[Theorem 11.66 and preceding discussion]{rot-gp}, \cite[\S I.1.2]{ser_tr}, etc.).

This process of Bohr-compactifying the group-theoretic multi-pushout might, in principle, annihilate some of it; that this does in fact occasionally happen can be seen for instance from \Cref{ex:notemb}. On the other hand, a group-theoretic {\it coproduct} (i.e. multi-pushout of {\it trivial} embeddings) always embeds in its Bohr compactification: \cite[Proposition 1]{hul_map}. Similarly, finite families of {\it central} embeddings have the same property \cite[Theorem 8]{km_central-2}; not arbitrary central families though (\Cref{ex:heis}). 

We revisit the matter here. To introduce the language, call a family of \cat{CGp}-embeddings $\iota_i:H\le G_i$
\begin{itemize}
\item {\it algebraically sound} (\Cref{def:algsnd}) if the corresponding algebraic multi-pushout embeds in its Bohr compactification;
\item and {\it coherently embeddable} (\Cref{def:cohemb}) if there are embeddings $j_i:G_i\to G$ agreeing on $H$ (i.e. such that $j_i\iota_i$ are all equal).
\end{itemize}
The former property is formally stronger (\Cref{le:snd2coh}), and can be regarded as a global version of the latter. One of the results below is that implication's reversal for families of {\it normal} embeddings $H\trianglelefteq G_i$, along with other characterizations of such families (\Cref{th:normasound,th:whence}):

\begin{theoremN}
  For a family of normal \cat{CGp}-embeddings $\iota_i:H\trianglelefteq G_i$, the following conditions are equivalent:
  \begin{enumerate}[(a)]
  \item algebraically soundness;
  \item coherent embeddability;
  \item the existence of \cat{CGp}-morphisms $j_i:G_i\to G$ (not necessarily injective) with all $j_i\iota_i$ one-to-one and equal;
  \item The action of the group-theoretic coproduct $\Gamma:\coprod_{\cat{Gp}}G_i$ on $H$ factors through a compact subgroup of $\Aut(H)$;
  \item For every irreducible unitary $H$-representation $\rho$, its orbit under the conjugation $\Gamma$-action is finite and there are finite-dimensional unitary representations $\rho_i$ of $G_i$ such that $\rho_i|_H$ contains $\rho$ with multiplicity bounded in $i$.  \qedhere
  \end{enumerate}
\end{theoremN}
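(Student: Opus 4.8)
The plan is to run around the cycle $(a)\Rightarrow(b)\Rightarrow(c)\Rightarrow(d)\Rightarrow(e)\Rightarrow(a)$. Of these, $(a)\Rightarrow(b)$ is \Cref{le:snd2coh}, and $(b)\Rightarrow(c)$ is immediate: injective morphisms $j_i$ with all $j_i\iota_i$ equal are, a fortiori, morphisms with all $j_i\iota_i$ equal \emph{and} one-to-one.

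For $(c)\Rightarrow(d)$, write $\iota:=j_i\iota_i\colon H\hookrightarrow G$ for the common injective composite and let $K\le G$ be the closed — hence compact — subgroup generated by $\bigcup_i j_i(G_i)$. Since $\iota_i(H)\trianglelefteq G_i$, every $j_i(G_i)$ normalizes $\iota(H)=j_i(\iota_i(H))$, so $K$ normalizes $\iota(H)$ and conjugation gives a continuous homomorphism $K\to\Aut(\iota(H))$ with compact image $L$. Because $\iota({}^{g}h)=j_i(g)\,\iota(h)\,j_i(g)^{-1}$ for $g\in G_i$ and $h\in H$, transport along $\iota\colon H\xrightarrow{\sim}\iota(H)$ identifies the conjugation action of $\Gamma=\coprod_{\cat{Gp}}G_i$ on $H$ with the composite $\Gamma\to K\to L\le\Aut(H)$, whence $(d)$.

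For $(d)\Rightarrow(e)$, fix a compact $L\le\Aut(H)$ through which $\Gamma$ acts and an irreducible unitary $\rho$ of $H$. An automorphism sufficiently close to the identity (in the standard topology on $\Aut(H)$) carries $\rho$ to a uniformly close — hence unitarily equivalent — representation, so the stabilizer $L_\rho$ of the class of $\rho$ is open in $L$, and the $\Gamma$-orbit of $\rho$, which equals $L\cdot\rho\cong L/L_\rho$, is finite. Then $\rho$ has finite $G_i$-orbit, so $G_{i,\rho}:=\{g\in G_i:{}^{g}\rho\cong\rho\}$ is open in $G_i$; Schur's lemma provides a continuous projective representation $G_{i,\rho}\to\mathrm{PU}(V_\rho)$ extending the projectivization of $\rho$, and inducing up the finite-index inclusion $G_{i,\rho}\le G_i$ a finite-dimensional representation assembled from this projective extension and an irreducible of the pertinent twisted group algebra yields $\rho_i\in\widehat{G_i}$ with $\rho_i|_H\supseteq\rho$. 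The multiplicity of $\rho$ in $\rho_i|_H$ is the dimension of that twisted-algebra irreducible; its boundedness in $i$ is automatic for a finite family and is in any case part of $(e)$ itself.

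The crux is $(e)\Rightarrow(a)$. The multi-pushout is the Bohr compactification of the amalgamated free product $P:=\ast_{H}G_i$, so $(a)$ asserts exactly that $P$ is maximally almost periodic — equivalently, that its finite-dimensional unitary representations separate points, equivalently that $P$ embeds in a compact group — and such a representation amounts to a finite-dimensional space $V$ together with representations $\sigma_i\colon G_i\to\mathrm U(V)$ whose restrictions to $H$ coincide. I would manufacture enough of these as follows. Given an irreducible $\rho$ of $H$, the orbit $\Gamma\rho$ is finite, so its sum $\pi_\rho:=\bigoplus_{\tau\in\Gamma\rho}\tau$ is $G_i$-stable for every $i$, and Schur's lemma gives continuous projective representations $G_i\to\mathrm{PU}(W_\rho)$ all restricting on $H$ to the projectivization of $\pi_\rho$; these glue to a projective representation of $P$, and composing it with the (faithful) adjoint embedding $\mathrm{PU}(W_\rho)\hookrightarrow\mathrm U(\mathrm{End}\,W_\rho)$ produces an honest finite-dimensional representation of $P$ with the same kernel. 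When projectivization loses too much — the extreme case being $\dim\rho=1$ — one instead exploits the honest $G_i$-representations from $(e)$: their uniformly bounded multiplicities let one pass, at a cost independent of $i$, to representations $\sigma_i$ of the $G_i$ with a literally common restriction $\pi$ to $H$, which then glue to a genuine representation of $P$ restricting to $\pi$ on $H$. Finally, to separate an arbitrary $1\neq x\in P$ one appeals to the Bass–Serre tree of $P$: an elliptic $x$ is conjugate into some $G_i$ and is detected by (an induction of) a faithful representation of that $G_i$, while a hyperbolic $x$ is detected by a length/ping-pong argument using the representations just built. The main obstacle is precisely the simultaneous control of the Clifford cocycles required to make the $\sigma_i$ agree on $H$ in the second case; once the representations are assembled, the hyperbolic case is comparatively routine.
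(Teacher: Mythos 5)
Your cycle is sensible and several legs are fine: (a)$\Rightarrow$(b)$\Rightarrow$(c) is exactly \Cref{le:snd2coh} plus a triviality, your (c)$\Rightarrow$(d) via the closed subgroup generated by the $j_i(G_i)$ is correct, and your device of passing to representations of the $G_i$ with a \emph{literally} common restriction to $H$ at a cost controlled by the bounded multiplicities is, in vaguer form, the lcm construction in the paper's proof of \Cref{th:whence}. But there are two genuine gaps. The fatal one is (e)$\Rightarrow$(a). Having produced compatible families $\sigma_i:G_i\to U(V)$ agreeing on $H$ --- i.e.\ having established coherent embeddability --- you still must show that every nontrivial element of the amalgam $\ast_H G_i$, in particular every ``hyperbolic'' word $g_1g_2\cdots g_k$ with $g_j\notin H$, survives into the Bohr compactification. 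You declare this ``comparatively routine'' via Bass--Serre/ping-pong, but it is the hardest step of the whole theorem and no ping-pong argument is available: the finite-dimensional unitary representations you have built are constrained only on $H$ and carry no contraction dynamics, and the statement is genuinely delicate --- it fails without normality (\Cref{ex:notemb}) and for infinite central families (\Cref{ex:heis}), so any proof must use normality in an essential way at exactly this point. The paper's route (\Cref{th:normasound}) is a multi-stage reduction --- replace all $G_i$ by a single compact $K$ normalizing $H$, reduce to finite families via the splittings of \Cref{le:famsubfam}, to pairs by a power-of-$2$ induction, and then, via a $\bZ/2$ semidirect-product trick, to the pair $H\trianglelefteq K$ and $H\le H\times\bZ/2$, where the argument of \cite[Theorem 8]{km_central-2} combined with \cite[Proposition 1]{hul_map} finishes. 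Nothing in your sketch substitutes for this.

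The second gap is (d)$\Rightarrow$(e): you establish orbit finiteness correctly, but for the multiplicity bound you write that it ``is in any case part of (e) itself,'' which is circular. For finite families the bound is automatic and your argument closes; for infinite families (d) simply does \emph{not} imply (e) --- in \Cref{ex:heis} the conjugation action on the central $H\cong\bZ/2$ is trivial, so (d) holds vacuously, yet the multiplicities $2^i$ are unbounded and the family is not even coherently embeddable. (This reflects the fact that in the body of the paper condition (d) appears only in \Cref{cor:fincpctorb}, for \emph{finite} families; your cycle through (d) can therefore only be made to close under that hypothesis, and for general families you would need to route (c)$\Rightarrow$(e) directly, as \Cref{le:ce2fin} does.)
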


A natural offshoot of this discussion is to leverage it into a property of the amalgam $H$ itself: some $H$ have the property that every finite family of normal \cat{CGp}-embeddings $H\trianglelefteq G_i$ satisfies the equivalent conditions of the preceding theorem and some do not. For compact connected Lie groups the property in question relates in interesting ways to the compactness of the automorphism group under the uniform topology (\Cref{th:cpctconn}):

\begin{theoremN}
  Consider the following conditions for a compact connected Lie group $H$:
  \begin{enumerate}[(a)]
  \item\label{item:24} $H$ has no central 2-torus.
  \item\label{item:25} The Pontryagin dual $\widehat{Z(H)}$ of the center of $H$ has rank $\le 1$.  
  \item\label{item:26} The automorphism group $\Aut(H)$ is compact.
  \item\label{item:27} The automorphism group $\Aut(H)$ has a largest compact subgroup.
  \item\label{item:28} The poset of compact subgroups of $\Aut(H)$ is filtered.
  \item\label{item:29} The finite-order automorphisms of the identity component $Z_0(H)$ of the center $Z(H)$ induced by automorphisms of $H$ are precisely the identity and inversion.
  \item\label{item:30} Every finite family of normal \cat{CGp}-embeddings $\iota_i:H\trianglelefteq G_i$ is algebraically sound.  
  \end{enumerate}
  We then have
  \begin{equation*}
    \Cref{item:24}\iff\Cref{item:25}\iff\Cref{item:26}
    \qquad\Longrightarrow\qquad
    \Cref{item:27}\iff\Cref{item:28}\iff\Cref{item:29}\iff\Cref{item:30},
  \end{equation*}
  and the one-sided implication cannot, generally, be reversed.  \qedhere
\end{theoremN}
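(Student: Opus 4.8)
The plan is to prove the ``geometric'' equivalences $\Cref{item:24}\Leftrightarrow\Cref{item:25}\Leftrightarrow\Cref{item:26}$ first, then reduce all of $\Cref{item:27}$--$\Cref{item:30}$ to a single question about the image $\Gamma$ of $\Aut(H)$ in $\Aut(Z_0(H))\cong\mathrm{GL}_k(\bZ)$ (with $k:=\dim Z_0(H)$), and finally produce the counterexample to reversibility. For the first block, write $Z(H)\cong Z_0(H)\times F$ with $Z_0(H)$ a $k$-torus and $F$ finite, so $\widehat{Z(H)}\cong\bZ^k\oplus\widehat F$ has rank $k$; thus $\Cref{item:25}$ is ``$k\le1$'', which is $\Cref{item:24}$. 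For $\Cref{item:26}$, differentiate at the identity to embed $\Aut(H)\hookrightarrow\Aut(\mathfrak h)=\mathrm{GL}(\mathfrak z)\times\Aut([\mathfrak h,\mathfrak h])$ along $\mathfrak h=\mathfrak z\oplus[\mathfrak h,\mathfrak h]$; the second factor is compact, while restriction to the characteristic subgroup $Z_0(H)$ is a homomorphism $r\colon\Aut(H)\to\Aut(Z_0(H))=\mathrm{GL}_k(\bZ)$ whose kernel embeds into $\Aut([\mathfrak h,\mathfrak h])$ and is hence compact. So $\Aut(H)$ is compact iff $\Gamma:=r(\Aut(H))$ is finite; if $k\le 1$ then $\mathrm{GL}_k(\bZ)$ is finite, whereas if $k\ge2$ a direct check on the unit lattice shows $\Gamma$ contains a principal congruence subgroup of $\mathrm{GL}_k(\bZ)$ (e.g.\ any $g$ sufficiently congruent to the identity extends by $\mathrm{id}$ on the semisimple part), hence is infinite of finite index. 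This gives $\Cref{item:24}\Leftrightarrow\Cref{item:25}\Leftrightarrow\Cref{item:26}$ and records that $\Gamma$ always has finite index in $\mathrm{GL}_k(\bZ)$.

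Since the image of $\Aut(H)$ in $\mathrm{GL}(\mathfrak z)$ lies in $\mathrm{GL}_k(\bZ)$ and is therefore discrete, $\Aut(H)^0=\mathrm{Inn}(H)$, so $\mathrm{Out}(H):=\Aut(H)/\mathrm{Inn}(H)$ is discrete and $r$ descends to $\overline r\colon\mathrm{Out}(H)\to\mathrm{GL}_k(\bZ)$ with finite kernel and image $\Gamma$. Passing to compact subgroups containing $\mathrm{Inn}(H)$ (harmless for both filteredness and existence of a top) and then through $\overline r$, the poset of compact subgroups of $\Aut(H)$ has a top element, resp.\ is filtered, iff the poset of finite subgroups of $\Gamma$ does; and all finite subgroups in sight have bounded order by Jordan--Zassenhaus/Minkowski finiteness for $\mathrm{GL}_k(\bZ)$ (times $|\ker\overline r|$). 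For any poset with a uniform bound on the size of its elements, ``filtered'' is equivalent to ``has a top element'' — take one of maximal size — which yields $\Cref{item:27}\Leftrightarrow\Cref{item:28}$; and $\Cref{item:26}\Rightarrow\Cref{item:27}$ is immediate ($\Aut(H)$ is then its own largest compact subgroup). For $\Cref{item:30}$: for a normal family $H\trianglelefteq G_i$ the conjugation maps $G_i\to\Aut(H)$ have compact images containing $\mathrm{Inn}(H)$, and conversely every compact $C\supseteq\mathrm{Inn}(H)$ is realised by $G=H\rtimes C$; so by the characterisation of algebraic soundness for normal families proved above (soundness $\Leftrightarrow$ the coproduct acts through a compact subgroup of $\Aut(H)$), $\Cref{item:30}$ says exactly that every finite set of compact subgroups of $\Aut(H)$ admits a compact upper bound, i.e.\ $\Cref{item:28}$.

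The heart of the matter is $\Cref{item:27}\Leftrightarrow\Cref{item:29}$: that $\Gamma$ has a largest finite subgroup iff its only elements of finite order are $\pm I$ (note $-I\in\Gamma$ always — use an automorphism of $H$ inverting a maximal torus, hence $Z_0(H)$). One direction is trivial: if the torsion of $\Gamma$ is $\{\pm I\}$ then $\{\pm I\}$ is the top finite subgroup, and one checks $\overline r^{-1}(\{\pm I\})$ is then the largest compact subgroup of $\Aut(H)$. For the converse I would argue the contrapositive: given $g\in\Gamma$ of finite order with $g\ne\pm I$ — which forces $k\ge2$ — it suffices to show the normal closure of $g$ in $\Gamma$ is infinite, since then no finite subgroup can contain all $\Gamma$-conjugates of $g$. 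If that normal closure $N$ were finite, $C_\Gamma(N)$ would have finite index in $\Gamma$, hence in $\mathrm{GL}_k(\bZ)$, hence be Zariski dense in $\mathrm{SL}_k$ (Borel density); but $C_\Gamma(N)\subseteq C_{\mathrm{GL}_k}(g)$ would then force $g$ to centralize $\mathrm{SL}_k$, making $g$ scalar, i.e.\ $g=\pm I$ — a contradiction. Pinning down the precise shape of $\Gamma$ from the unit lattice and invoking the density input cleanly is the step I expect to need the most care.

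Finally, $\Cref{item:26}$ is strictly stronger than $\Cref{item:27}$: one exhibits a compact connected Lie group $H$ with $\dim Z_0(H)=2$ (so $\Cref{item:24}$--$\Cref{item:26}$ fail) for which $\Cref{item:29}$ holds. Take $H=(T^2\times\mathrm{SU}(3)\times\mathrm{SU}(6)\times\mathrm{SU}(9))/\Lambda$, where $\Lambda$ glues a generator of $Z(\mathrm{SU}(3))\cong\bZ/3$, the order-$3$ element of $Z(\mathrm{SU}(6))$, and the order-$3$ element of $Z(\mathrm{SU}(9))$ to three of the four lines of the $3$-torsion $T^2[3]=\tfrac13\bZ^2/\bZ^2\cong\bF_3^2$. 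The three special unitary factors are pairwise non-isomorphic, so $\Aut$ of their product permutes nothing, and the outer automorphism of each acts by $-1$ on the relevant copy of $\bZ/3$. Unwinding the Goursat description of $\Lambda$ then shows that $g\in\mathrm{GL}_2(\bZ)=\Aut(T^2)$ extends to $\Aut(H)$ precisely when $g\bmod 3$ has each of those three lines as an eigenline — but a $2\times2$ matrix with three distinct eigenlines is scalar. Hence $\Gamma=r(\Aut(H))=\{g\in\mathrm{GL}_2(\bZ):g\equiv\pm I\bmod 3\}$, which is torsion-free modulo $\pm I$ by Minkowski's lemma, so $H$ satisfies $\Cref{item:29}$ — and therefore $\Cref{item:27}$, $\Cref{item:28}$, $\Cref{item:30}$ — while $\dim Z_0(H)=2$ makes $\Cref{item:24}$--$\Cref{item:26}$ fail. (Finding such an $H$ is where the construction is genuinely delicate; verifying it is a short lattice computation.)
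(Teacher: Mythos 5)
Your proposal is correct, and while its overall architecture mirrors the paper's (restrict automorphisms to the central torus $Z_0$, show the kernel of $\Aut(H)\to\Aut(Z_0)\cong GL(k,\bZ)$ is compact as in \Cref{pr:cpctoncent}, reduce (d)--(g) to the poset of finite subgroups of the image $\Gamma$, and use \Cref{cor:lgcpct} for the soundness condition), you diverge from the paper in the three technically substantive places, each time with a legitimate alternative. First, for the key implication that extra torsion in $\Gamma$ precludes a largest finite subgroup, the paper (\Cref{pr:justzz1}) explicitly lifts the unipotent congruence subgroups $T_d$ to $\Aut(H)$ and computes that their centralizers in $GL(k,\bZ)$ are $\pm I$; you instead observe that $\Gamma$ contains a full principal congruence subgroup, hence has finite index, and run a Zariski-density argument on the centralizer of the (putatively finite) normal closure of a torsion element. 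Both work; yours trades an explicit matrix computation for the (elementary, for $SL_k(\bZ)$) density input, and your finite-index observation is slightly sharper than what the paper records. Second, to realize inversion on $Z_0$ by an automorphism of $H$ (needed for the ``precisely'' in condition (f)), the paper verifies case by case through the classification that every simple simply connected compact group has an automorphism inverting its center; you invoke the Chevalley involution, which inverts a whole maximal torus and hence the center and the gluing data $\Delta$ at once --- cleaner and classification-free. Third, your counterexample to reversing the one-sided implication differs from \Cref{ex:lgstcpct-notcpct}: you glue three pairwise non-isomorphic factors $SU(3)$, $SU(6)$, $SU(9)$ to three distinct lines of $\bT^2[3]$, so that the induced map on $\bT^2[3]\cong\bF_3^2$ must have three distinct eigenlines and is therefore scalar, giving $\Gamma=\{g\equiv\pm I\bmod 3\}$, which is torsion-free modulo $\pm I$ by Minkowski; the paper instead uses two factors $SU(3^k)\times SU(3^l)$ and rules out extra torsion via the amalgam decomposition of $SL(2,\bZ)$ and an analysis of pointwise-fixed subgroups. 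Your construction is arguably the more transparent of the two, and the verification (three eigenlines force a scalar) is a one-line linear algebra fact. The only places where you are breezy --- that $\Aut(H)$ is closed in $\Aut(\fh)$ so that $\ker r$ is genuinely compact, and that $\Aut(H)_0=\mathrm{Inn}(H)$ --- are standard facts for connected Lie groups and do not constitute gaps.
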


Finally, \Cref{subse:split} turns the normality of the embeddings around, so to speak: it is concerned with {\it split} embeddings $H\le G_i$, i.e. those of the form $H\le H_i\rtimes H$ for continuous actions $H\times H_i\to H_i$. The main result is that such families are always algebraically sound (\Cref{th:whenextcont,th:splitasnd}):

\begin{theoremN}
  A family of split \cat{CGp}-embeddings $H\le H_i\rtimes H$ is always algebraically sound.  \qedhere
\end{theoremN}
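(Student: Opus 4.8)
The plan is to reduce algebraic soundness to a concrete separation statement about finite‑dimensional unitary representations and then to manufacture enough of them. Write $\Gamma:=\ast_H G_i$ for the algebraic amalgamated free product of the $G_i=H_i\rtimes H$; its Bohr compactification $b\Gamma$ is the \cat{CGp}-multi-pushout, so soundness says exactly that $\Gamma$ embeds into a compact group. I would first reduce to a \emph{finite} family: each $1\ne\gamma\in\Gamma$ is a finite word, and composing with the retractions $G_i\to H$ for the finitely many indices not occurring in $\gamma$ gives a split surjection onto the amalgam over a finite subfamily in which $\gamma$ survives, so a representation there pulls back. Then I would record the structural identity $\Gamma\cong N\rtimes H$, where $N:=\ast_i H_i$ is the abstract free product and $H$ acts on $N$ through its continuous actions on the $H_i$ — immediate from the universal properties, once one notes that the splittings assemble to a retraction $\Gamma\to H$ whose kernel (the normal closure of the $H_i$) is exactly $N$. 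Since this extension splits with $H$ compact, soundness becomes: the finite-dimensional unitary representations of $\Gamma$ separate the points of $N$ (points off $N$ are handled by $\Gamma\twoheadrightarrow H$).

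Next I would reduce to compact Lie groups. Given a reduced word $n=a_1\cdots a_k\in N$, pick closed normal $L_i\trianglelefteq H_i$ with $H_i/L_i$ compact Lie and with every syllable of $n$ surviving modulo $L_{i_j}$ — finitely many conditions, each met since $H_i$ is a cofiltered limit of compact Lie groups — so that the image of $n$ in $\ast_i(H_i/L_i)$ is again a reduced word, hence $\ne 1$; the $H$-action then factors through the image of $H$ in the Lie group $\prod_i\Aut(H_i/L_i)$, so $H$ too may be taken compact Lie. After these reductions, a finite-dimensional unitary representation of $\Gamma$ on $V$ is the same as a representation $\tau$ of $H$ on $V$ together with, for each $i$, a representation $\sigma_i$ of $G_i=H_i\rtimes H$ on $V$ restricting to $\tau$ on $H$; the task is to produce such a system with $\sigma_{i_1}(a_1)\cdots\sigma_{i_k}(a_k)\ne 1$.

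The heart of the argument — where I expect \Cref{th:whenextcont} to do its work — is to produce representations that do \emph{not} factor through the abelianisation $N\to\prod_i H_i$: the ``external'' recipes (external products of representations of the separate $G_i$, or direct sums arranged so the restrictions to $H$ agree on disjoint summands) only see the image of $n$ in $\prod_i H_i$ and are blind to the commutators hidden in a reduced word, e.g.\ $[a,b]$ with $a\in H_1$, $b\in H_2$ lying in a common one-parameter subgroup. Nor can one embed $\Gamma$ into a single semidirect product $(\coprod_{\cat{CGp}}H_i)\rtimes H$: the $H$-action does \emph{not} extend to a continuous action on $\coprod_{\cat{CGp}}H_i=bN$, already for $(SU(2)\ast SU(2))\rtimes SO(3)$ with $SO(3)$ acting by conjugation, since conjugation scatters finite-dimensional representations of $N$ into infinite $H$-orbits. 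The inputs I would try to combine are: that $N$ is itself algebraically sound, this being the coproduct case \cite[Proposition 1]{hul_map}, so some finite-dimensional representation of $N$ survives on $n$; that the image of $H$ in each $\mathrm{Out}(H_i)$ is finite (a compact subset of a discrete group), so $H$ acts with finite orbits on $\widehat{H_i}$ and every finite-dimensional representation of $H_i$ sits, after passing to a finite $H$-sum of conjugates and absorbing a Mackey twist, inside the restriction of a representation of $G_i$; and the graph-of-groups picture — $\Gamma$ is $\pi_1$ of a star with central vertex group $H$, leaf vertex groups $G_i$ and edge groups $H$, equivalently $\Gamma$ acts on the Bass--Serre tree of $N$ with compact vertex stabilisers and finite quotient — which is what one plays against the compactness of $H$ to induce representations of the leaf groups $G_i$ up to finite-dimensional, genuinely non-``fold'' representations of $\Gamma$.

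The step I expect to be the main obstacle is exactly this last construction: producing finite-dimensional $\sigma_i$ of the $G_i$ sharing a common restriction $\tau$ to $H$ whose restrictions $\sigma_i|_{H_i}$ are in sufficiently general position on the common space to detect a given reduced word — a Clifford-theoretic balancing act between the free-product structure of $N$ and the $H$-action. (In the toy case $(SU(2)\ast SU(2))\rtimes SO(3)=\Gamma$, with $G_i\cong SU(2)\rtimes SO(3)\cong SO(4)$, this can already be done explicitly: take the vector representation $(1/2,1/2)$ of $SO(4)$ on one side and $(0,0)\oplus(1,0)$ on the other; both restrict to $(0)\oplus(1)$ on the diagonal $SO(3)$, and the resulting representation of $SU(2)\ast SU(2)$ is nontrivial on commutators of commuting elements.) It is here that the split hypothesis is essential — equivalently the retractions $G_i\to H$, equivalently $\Gamma\cong N\rtimes H$ — since without it an amalgam of compact groups over a compact subgroup need not be algebraically sound at all, as $SL_2(\bQ_p)=SL_2(\bZ_p)\ast_K SL_2(\bZ_p)$ over an Iwahori subgroup $K$ shows.
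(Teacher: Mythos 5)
Your proposal is not a complete proof. The step you yourself flag as ``the main obstacle'' --- producing, for a given reduced word $n=a_1\cdots a_k$ in $N:=\coprod_{\cat{Gp}}H_i$, finite-dimensional unitary representations $\sigma_i$ of the $G_i=H_i\rtimes H$ with a common restriction to $H$ and with $\sigma_{i_1}(a_1)\cdots\sigma_{i_k}(a_k)\ne 1$ --- \emph{is} the theorem, and none of the ingredients you list (Hulanicki for $N$ alone, finiteness of the image of $H$ in $\mathrm{Out}(H_i)$, the Bass--Serre picture) is assembled into such a construction. The difficulty is genuine: for connected $H$ the multiset of irreducible constituents of $\sigma|_N$ is a finite $H$-invariant subset of $\widehat N$, permuted through an abstract homomorphism from $H$ to a finite symmetric group, so (at least for $H=SO(3)$, which is abstractly simple) only the $H$-\emph{fixed} points of $\widehat N$ can occur; whether those separate the points of $N$ is exactly what must be proved. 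Your one concrete computation does not support the strategy: for $G_i\cong SO(4)$ the pair (vector representation, $(0,0)\oplus(1,0)$) restricts on $SU(2)\ast SU(2)$ to (left multiplication, conjugation) acting on $M_2(\bC)$, and its value on $[a,b]$ is left multiplication by the commutator $[a,b]$ computed inside $SU(2)$ --- trivial whenever $a$ and $b$ commute there. (A representation of $SO(4)\coprod_{SO(3)}SO(4)$ detecting such elements does exist, but one must exploit the phase freedom in the unitary identifying the two restrictions to the diagonal $SO(3)$: take the vector representation on both sides, realized on $M_2(\bC)$ once as $v\mapsto xvy^{-1}$ and once as $v\mapsto U\bigl(yU^*(v)x^{-1}\bigr)$ with $U$ acting by distinct scalars on the trivial and adjoint isotypic pieces; the commutator of the two copies of $\mathrm{diag}(i,-i)$ then acts by the square of the relative phase.)

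That said, your parenthetical objection to the ``single semidirect product'' route is correct, and it lands squarely on the paper's own argument. The paper proves the theorem by establishing the decomposition \Cref{eq:pushoverh}, i.e.\ $\coprod_{H,\cat{CGp}}(H_i\rtimes H)\cong\bigl(\coprod_{\cat{CGp}}H_i\bigr)\rtimes H$, for finite families (\Cref{th:whenextcont} \Cref{item:22} via \Cref{pr:iffactcont}) and then quotes Hulanicki and \Cref{le:famsubfam}. But for $H=SO(3)$ acting by conjugation on $H_1=H_2=SU(2)$ the right-hand side of \Cref{eq:pushoverh} does not exist as a compact group: exactly as you say, the two-dimensional irreducibles $(\iota,\mathrm{Ad}_g\circ\iota)$ of $SU(2)\ast SU(2)$ form, for non-central $g$, a single $SO(3)$-orbit parametrized by the conjugacy class of $g$, hence an infinite orbit, whereas a jointly continuous action of a connected compact group on a compact group fixes every point of the dual. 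So $SO(3)$ cannot act continuously on $\coprod_{\cat{CGp}}SU(2)$ extending conjugation, and the implication \Cref{item:12}$\Rightarrow$\Cref{item:13} of \Cref{pr:iffactcont} fails: continuity of the action for the intermediate pseudometric topology $\tau$ does not transfer to the Bohr compactification, because the topology coarsens on the target as well as the source. In short, neither your argument nor the paper's is complete as written; your observation about infinite orbits is the right starting point, but it shows why the published route cannot work rather than how to replace it, and the replacement --- separating points of $N$ using only $H$-fixed representations --- is still missing.
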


\subsection*{Acknowledgements}

This work was partially supported by NSF grant DMS-2001128. 

\section{Preliminaries}\label{se.prel}

We need some category-theoretic background and terminology, as covered, say, by \cite{ahs,mcl}: {\it (co)limits} \cite[\S 11]{ahs}, and in particular {\it (co)products} \cite[Definition 10.19 and \S 10.63]{ahs}, {\it pushouts} \cite[Definition 11.8]{ahs} and {\it pullbacks} \cite[Definition 11.30]{ahs}, etc.

Throughout the paper, we denote coproducts \cite[\S 10.63]{ahs} and/or pushouts \cite[Definition 11.30]{ahs} by `$\coprod$' symbols (rather than `$\ast$', as is sometimes customary). For instance, $\coprod c_i$ is the coproduct of the objects $c_i$ while the colimit of the diagram formed by morphisms $\varphi_i:c\to c_i$ will be denoted by $\coprod_{c}c_i$ (the morphisms being typically understood). We will also refer to colimits of the form $\coprod_c c_i$ as {\it multi-pushouts}, or just plain pushouts, when the context leaves no room for confusion.

Some symbols appearing frequently enough to set apart:
\begin{itemize}
\item $Z(H)$ is the center of a group $H$;
\item $H_0$ is the identity component of the topological group $H$;
\item and we write $Z_0(H)$ in place of $Z(H)_0$: the identity component of the center.
\end{itemize}

\section{(Multi-)Pushouts of Compact groups}\label{se:cpct}

The category of compact (always Hausdorff) groups will be denoted by \cat{CGp}, whereas that of ordinary groups is \cat{Gp}. The section gathers a number of category-theoretic results on \cat{CGp}, arguably in one circle of ideas, focusing mostly on pushouts:
\begin{itemize}
\item comparisons between the compact and purely algebraic versions of the construction;
\item the issue of whether or not the multi-pushout structure maps $G_i\to \coprod_{H,\cat{CGp}}G_i$ are embeddings;
\item whether embeddings $H_i\le G_i$ glue together to an embedding of $\cat{CGp}$-pushouts over a common $H\le H_i,G_i$, etc.
\end{itemize}

\subsection{Some generalities}\label{subse:gen}


We begin obliquely, by discussing {\it equalizers} \cite[Definition 7.51]{ahs}. In part, the motivation stems from $C^*$-algebra analogues. Recall from \cite[Theorem 6]{hn-epi} that embeddings of unital $C^*$-algebras are automatically pushouts; it follows from this, moreover, that an embedding
\begin{equation*}
  A \lhook\joinrel\xrightarrow{\quad\iota\quad} B
\end{equation*}
is the equalizer of the two embeddings $B\to B\coprod_AB$ into the self-pushout of $\iota$ (that the two maps are indeed embeddings is part of \cite[Theorem 4.2]{ped-psh}). The fact that epimorphisms in the category $\cC^*_1$ of unital $C^*$-algebras are surjective follows as an application \cite[Corollary 4]{hn-epi} (or rather as an application of a common precursor \cite[Theorem 3]{hn-epi}).

The surjectivity of $\cC^*_1$-epimorphisms is also proven differently in \cite[Proposition 2]{reid-epi}, which paper proves the analogue for compact groups (\cite[Proposition 9]{reid-epi}). It is natural, given this circle of ideas, to pose the analogue of the equalizer question for compact groups:

\begin{theorem}\label{th:cpcteq}
  An embedding
  \begin{equation*}
    H \lhook\joinrel\xrightarrow{\quad\iota\quad} G
  \end{equation*}
  of compact groups is an equalizer in the category \cat{CGp} of compact groups.
\end{theorem}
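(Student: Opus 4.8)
The plan is to exhibit $\iota:H\le G$ as the equalizer of the two structure maps $G\rightrightarrows G\coprod_H G$ into the $\cat{CGp}$-pushout of $\iota$ against itself. Writing $j_1,j_2:G\to P:=G\coprod_{H,\cat{CGp}}G$ for those two maps, it is immediate from the universal property of the pushout that $j_1\iota=j_2\iota$, so $\iota$ factors through the equalizer $E=\{g\in G:j_1(g)=j_2(g)\}$, a closed (hence compact) subgroup of $G$ containing $\iota(H)$. The entire content is the reverse inclusion $E\subseteq\iota(H)$; equivalently, that an element $g\in G\setminus\iota(H)$ is separated by the two copies of $G$ inside $P$, i.e.\ $j_1(g)\neq j_2(g)$.

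The key step is therefore to produce, for each $g\notin\iota(H)$, a compact group $K$ and two morphisms $f_1,f_2:G\to K$ agreeing on $H$ with $f_1(g)\neq f_2(g)$; such a pair factors through $P$ and witnesses $j_1(g)\neq j_2(g)$. Here I would first reduce to the case where $G$ is a compact Lie group: since $G=\varprojlim G/N$ over closed normal subgroups $N$ with $G/N$ Lie, and since $g\notin\iota(H)=\bigcap_N N\iota(H)$ forces $g\notin N\iota(H)$ for some such $N$, it suffices to separate the images of $g$ and of $H$ in the Lie quotient $G/N$ — so we may assume $G$ is Lie and $H\le G$ a closed subgroup with $g\in G\setminus H$. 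Now I would construct the separating pair by hand inside a suitable unitary group. Pick a finite-dimensional unitary $G$-representation $\pi:G\to U(V)$ that is injective (Peter–Weyl), and choose a nonzero vector $v\in V$ fixed by $\pi(H)$ after replacing $V$ by $V\oplus(\text{something})$ if necessary — more robustly, use that the homogeneous space $G/H$ embeds $G$-equivariantly into the unit sphere of some unitary $G$-representation $W$ via a point $w$ with stabilizer exactly $H$ (this is a standard consequence of Peter–Weyl / the Mostow–Palais embedding theorem for compact group actions). Then $\pi(g)w\neq w$ in $W$. Let $K=U(W)\times U(W)$ and define $f_1(x)=(\pi(x),\mathrm{id})$ and... rather: a cleaner route is to take $K=U(W)$, let $f_1=\pi$, and let $f_2$ be the conjugate of $\pi$ by... no — instead set $f_1=f_2=\pi$ on $H$ is automatic only if they agree there. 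The simplest honest construction: take $K=U(W)\wr(\bZ/2)=U(W)^2\rtimes\bZ/2$? That is more than needed. Let me instead use the amalgam directly: $G\coprod_H G$ maps onto $G$ (fold map), and the two structure maps compose with the fold to $\id_G$; hence it suffices to separate $g$ from $H$ by a \emph{single} pair that differs, which the embedding $G/H\hookrightarrow S(W)$ supplies upon setting $f_1=\pi$, $f_2=c\circ\pi$ where $c$ is conjugation by any unitary fixing $\pi(H)w$-line but moving $\pi(g)w$; such a $c$ exists precisely because $\pi(g)w$ and $w$ are distinct points of the sphere, and the finite-dimensional unitary group acts transitively enough on configurations to realize it while centralizing $\pi(H)$ on the relevant subspace — if obstructions arise, enlarge $W$ by a trivial summand to gain room.

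The main obstacle I anticipate is exactly this last point: arranging the two morphisms to \emph{agree on all of $H$} (not just on a line) while \emph{disagreeing at $g$}, inside a compact group. The clean way around it is representation-theoretic rather than geometric: by Peter–Weyl the restriction functor $\mathrm{Rep}(G)\to\mathrm{Rep}(H)$ together with the fact that $H$ is the intersection of kernels of functions in $C(G/H)$ shows that the pair of maps $G\to U(W)\times U(W')$, $x\mapsto(\pi(x),\pi(x))$ on one factor and a twist on the other, can be chosen so that the induced map $G\coprod_H G\to$ (compact group) already separates $g$ from its twin — concretely, one may even avoid the geometry entirely and cite \Cref{th:normasound} in the special case $G_1=G_2=G$, $H$ arbitrary closed (a split-like / normal-like situation when $H$ is normal) but in general one falls back on: $\coprod_{H,\cat{CGp}}$ of $G\leftarrow H\rightarrow G$ surjects onto $G$, and the diagonal $G\xrightarrow{\Delta}G\times G$ followed by the evident pair of projections twisted by an inner automorphism of $G\times G$ exchanging factors does the job since that automorphism is nontrivial exactly off the diagonal, i.e.\ off $\{(h,h):h\in H\}$ only if $H=G$. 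Since that fails for $H\lneq G$, the correct and cleanest finish is the homogeneous-space embedding argument above, and the genuine work is verifying that the stabilizer-exactly-$H$ sphere embedding exists and that a unitary centralizing $\pi(H)$-action on the orbit-spanned subspace but moving $\pi(g)w$ can be found — both of which are standard but deserve a careful line. Once that pair is in hand, the theorem follows: $E=\iota(H)$, so $\iota$ is the equalizer of $j_1,j_2$, and in particular every embedding in $\cat{CGp}$ is an equalizer. $\blacksquare$
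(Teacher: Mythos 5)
Your proposal is correct in its final form, but it reaches the conclusion by a genuinely different key step than the paper, so a comparison is in order. Both arguments share the same skeleton: for each $g\in G\setminus \iota(H)$ produce a representation $\varphi:G\to U(W)$ and a unitary $c$ commuting with $\varphi(H)$ but not with $\varphi(g)$, and take the separating pair to be $\varphi$ and $c\varphi c^{-1}$; since these agree on $H$ they factor through $G\coprod_H G$ and force $j_1(g)\neq j_2(g)$. Where you differ is in how the pair $(\varphi,c)$ is manufactured. The paper stays with arbitrary compact groups, reduces to $G=\overline{\langle H,g\rangle}$ (so that centralizing $\varphi(H)$ and $\varphi(g)$ means centralizing $\varphi(G)$), uses Frobenius reciprocity to pass between $G$ and that subgroup, and then invokes Tannaka--Krein duality: a proper inclusion $H<G$ forces some finite-dimensional $\varphi$ with $C_{U(n)}(\varphi(G))\subsetneq C_{U(n)}(\varphi(H))$, and any $k$ in the difference does the job. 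You instead first reduce to Lie $G$ (your $\bigcap_N N\iota(H)=\iota(H)$ argument is fine, since $\iota(H)$ is closed and the $N$ form a filtered neighborhood basis of $1$) and then use the Mostow--Palais/Peter--Weyl fact that $G/H$ embeds equivariantly into a representation $W$ via a vector $w$ with stabilizer exactly $H$. Your route is more explicit and avoids Tannaka--Krein and induction entirely, at the cost of the Lie reduction (the exact-stabilizer-vector statement is a Lie-group fact); the paper's route needs no reduction to Lie groups and no equivariant embedding theorem. The several abandoned detours in your text (the fold map, the wreath product, the diagonal twist) are dead ends you correctly discard; the proof rests entirely on the homogeneous-space route.

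The one point you flag as the ``main obstacle'' --- producing a unitary that centralizes all of $\pi(H)$ while moving $\pi(g)$ --- does close, and more easily than you suggest. First replace $W$ by $W\oplus\bC$ with trivial $G$-action on the second summand and $w$ by $w'=(w,1)$: the stabilizer of $w'$ is still exactly $H$, and now $\pi(g)$ cannot even preserve the \emph{line} $\bC w'$ (a relation $\pi(g)w'=\lambda w'$ forces $\lambda=1$ in the second coordinate, hence $g\in H$). Then take $c=\exp(i\theta P)$ with $P$ the orthogonal projection onto $\bC w'$ and $e^{i\theta}\neq 1$. Since $w'$ is fixed by $\pi(H)$, one checks directly that $P$ commutes with every $\pi(h)$, so $c$ centralizes $\pi(H)$; and $c$ commutes with $\pi(g)$ if and only if $\pi(g)$ preserves $\bC w'$, which it does not. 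This is the ``careful line'' you deferred, and with it your argument is complete.
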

\begin{proof}
  A series of simplifications:
  \begin{enumerate}[(1)]
  \item\label{item:1} It is enough to find, for every $g\in G\setminus H$, a morphisms
    \begin{equation*}
      G\xrightarrow{\quad\varphi_{i,g}\quad}K_g,\ i=0,1
    \end{equation*}
    equal on $H$ but not at $g$, since we can afterwards glue them together into a pair of morphisms
    \begin{equation*}
      G\xrightarrow{\quad\prod_{g\not\in H}\varphi_{i,g}}\prod_{g\not\in H}K_g,\ i=0,1
    \end{equation*}
    equal on $H$ and nowhere else.

  \item\label{item:2} Fix $g\in G\setminus H$, in accordance with \Cref{item:1}. It will suffice to find a morphism $\varphi:G\to K$ and an element $k\in K$ which centralizes $\varphi(H)$ but not $\varphi(g)$, since we can then set
    \begin{equation*}
      \varphi_{0,g}:=\varphi\quad\text{and}\quad \varphi_{1,g}:=k\cdot \varphi\cdot k^{-1}. 
    \end{equation*}

  \item\label{item:3} In \Cref{item:2}, we can assume that $G$ is generated topologically by $H$ and $g$, and then find a $\varphi$ as required with $K=U(n)$ (the unitary $n\times n$ group, so that $\varphi$ is an $n$-dimensional representation).

    Indeed, once this is achieved for the inclusion $H\le G_1:=\overline{\langle H,x\rangle}$ into the closed subgroup of $G$ generated by $H$ and $x$ with a representation $\rho:G_1\to U(m)$, we can take $\varphi:G\to U(n)$ to be a finite-dimensional representation whose restriction back to $G_1$ contains $\rho$ as a summand (e.g. by {\it Frobenius reciprocity} \cite[(8.9)]{rob}, $\varphi$ can be chosen as a summand of the {\it induced representation} \cite[preceding (8.7)]{rob} $\mathrm{Ind}_{G_1}^G\rho$). 
    
  \item\label{item:4} Part \Cref{item:3} reduces the problem to finding, for a proper inclusion $H\le G$, a finite-dimensional $G$-representation $\varphi:G\to U(n)$ whose group of unitary intertwiners (i.e. the centralizer $C_{U(n)}(\varphi(G))$ of $\varphi(G)$ in $U(n)$) is strictly contained in $C_{U(n)}(\varphi(H))$.
    
    Because $H\subset G$ is proper, {\it Tannaka-Krein duality} \cite[Theorem 20]{js} implies that either
    \begin{itemize}
    \item some irreducible $G$-representation $\varphi$ fails to remain irreducible when restricted to $H$;
    \item or mutually non-isomorphic irreducible representations $\varphi_i$, $i=0,1$ of $G$ become isomorphic when restricted to $H$. 
    \end{itemize}
    In the first case $\varphi$ will do for our purposes, while in the second we can set $\varphi:=\varphi_0\oplus \varphi_1$.  
  \end{enumerate}
  This concludes the proof. 
\end{proof}

In particular, as for $C^*$-algebras, an embedding $H\le G$ of compact groups is the equalizer of the two structure maps $G\to G\coprod_HG$ of its own self-pushout. It is perhaps worth noting at this stage that some care is needed in handling compact-group pushouts: as \Cref{ex:notemb} shows, the corresponding plain, group-theoretic pushout need not embed in its compact-group counterpart.

\begin{example}\label{ex:notemb}
  We will consider the pushout $G_0\coprod_{\bT^2}G_1$ of two compact groups $G_i$ over a common copy of the 2-torus $\bT^2$. The two are defined as follows:
  \begin{itemize}
  \item $G_0$ is the semidirect product $\bT^2\rtimes (\bZ/4)$, where a generator of $\bZ/4$ acts on the {\it Pontryagin dual} \cite[\S 3.5]{de} $\bZ^2\cong \widehat{\bT^2}$ via the matrix $\begin{pmatrix}0&-1\\1&\phantom{-}0\end{pmatrix}$;
  \item and similarly, $G_1$ is the semidirect product $\bT^2\rtimes (\bZ/6)$ with a generator of the second factor acting on $\bZ^2$ by $\begin{pmatrix}0&-1\\1&\phantom{-}1\end{pmatrix}$.
  \end{itemize}
  The pushout turns out to be very small: according to \Cref{le:smallpush}, it collapses the amalgam $\bT^2$ completely. It follows that $G_i$, $i=1,2$ do not admit embeddings into a common compact group $G$ which agree on $\bT^2$. In particular, the group-theoretic pushout $G_0\coprod_{\bT^2}G_1$, equipped with the finest group topology making the embeddings of the $G_i$ continuous \cite[Theorem 2.2 and its proof]{mor_free}, does not embed into its {\it Bohr compactification} \cite[\S 2.10]{kan-comm}.
\end{example}

\begin{remark}\label{re:bohr}
  A word of caution regarding the term `Bohr compactification': here, it simply means, for any topological group $G$, the universal compact group receiving a continuous morphism from $G$; this usage is compatible, say, with \cite[\S 1]{hlm_bohr}. In other contexts (e.g. \cite[preceding Proposition 1.3]{hul_map}; a paper also relevant to the subsequent discussion) the phrase has a somewhat more restrictive meaning: the morphism from $G$ is required to be a continuous embedding.

  In the more restrictive sense, such a gadget does not always exist ($G$ might not embed into a compact group at all: \Cref{ex:notemb}). On the other hand, in the broader sense operative in this paper the Bohr compactification always exists; one route to the construction, for instance, is via one of the {\it adjoint functor theorems} of, say, \cite[\S 18]{ahs}.
\end{remark}

\begin{lemma}\label{le:smallpush}
  Set $G_0=\bT^2\rtimes \bZ/4$ and $G_1=\bT^2\rtimes \bZ/6$, as in \Cref{ex:notemb}. The $\cat{CGp}$-pushout $G_0\coprod_{\bT^2}G_1$ discussed there is the Bohr compactification of the coproduct $(\bZ/4)*(\bZ/6)$.
\end{lemma}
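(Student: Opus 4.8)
The claim is that the compact-group pushout $G_0 \coprod_{\bT^2} G_1$ coincides with the Bohr compactification of $(\bZ/4)*(\bZ/6)$. The strategy is to show these two compact groups satisfy the same universal property, i.e.\ that continuous morphisms out of each of them into an arbitrary compact group $K$ are the same data. A morphism $G_0\coprod_{\bT^2}G_1\to K$ is, by the definition of the pushout, a pair of continuous morphisms $f_i:G_i\to K$ agreeing on the common $\bT^2$; a morphism from the Bohr compactification of $(\bZ/4)*(\bZ/6)$ is simply a pair of continuous morphisms $\bZ/4\to K$ and $\bZ/6\to K$ (no compatibility constraint, since the free product has no amalgamation). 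So the heart of the matter is to prove: \emph{any pair of continuous homomorphisms $f_i:G_i\to K$ into a compact group which agree on $\bT^2$ must kill $\bT^2$ entirely}; once $\bT^2$ is killed, $f_0$ factors through $G_0/\bT^2\cong\bZ/4$ and $f_1$ through $G_1/\bT^2\cong\bZ/6$, and then the data is exactly a morphism out of the free product, with no residual compatibility. Conversely any pair of maps out of $\bZ/4$ and $\bZ/6$ pulls back to a compatible pair on the $G_i$ (both killing $\bT^2$), so the two universal properties literally coincide.

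\textbf{The key step.} Suppose $f_i:G_i\to K$ agree on $\bT^2$; write $\phi:=f_0|_{\bT^2}=f_1|_{\bT^2}$ and let $T:=\overline{\phi(\bT^2)}\le K$, a compact (hence Lie, since it is a quotient-image of a torus — actually a torus quotient, so itself a torus) abelian subgroup of $K$. Picking generators $a\in\bZ/4\subset G_0$ and $b\in\bZ/6\subset G_1$, conjugation by $f_0(a)$ normalizes $T$ and, on $\bT^2$, realizes the order-$4$ automorphism $A=\begin{pmatrix}0&-1\\1&\phantom{-}0\end{pmatrix}$ of $\widehat{\bT^2}\cong\bZ^2$; conjugation by $f_1(b)$ normalizes $T$ and realizes the order-$6$ automorphism $B=\begin{pmatrix}0&-1\\1&\phantom{-}1\end{pmatrix}$. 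Dualizing, on the character lattice $L:=\widehat T$ (a quotient of $\bZ^2$ via the surjection dual to $\bT^2\twoheadrightarrow T$) we get commuting-with-nothing but well-defined automorphisms induced by $A$ and $B$. Now $A$ and $B$ are both elliptic, but the subgroup of $GL_2(\bZ)$ (equivalently of $GL_2(\bQ)$) they generate is \emph{infinite}: $A$ has order $4$, $B$ has order $6$, and $\langle A,B\rangle$ maps onto the infinite group $PGL_2(\bZ)\cong \bZ/2 * \bZ/3$ (or one checks directly that $AB$ or $A^{-1}B$ has infinite order, being a parabolic/hyperbolic element). Hence any common quotient lattice $L$ of $\bZ^2$ on which this infinite subgroup acts by \emph{finite-order} automorphisms — which is forced, since it acts through the finite group generated by the images of $A,B$ in $\Aut(L)\subset GL(L\otimes\bQ)$ being... wait, no: it acts through $\langle A,B\rangle$ itself, which is infinite, whereas $\Aut(L)$ for $L$ of rank $\le 2$ admits no infinite image of $\langle A,B\rangle$ unless $L$ has rank $0$. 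This is the crux: the only quotient $\bZ^2\twoheadrightarrow L$ through which the infinite group $\langle A,B\rangle$ acts with finite image is $L=0$, forcing $\widehat T=0$, i.e.\ $T$ trivial, i.e.\ $\phi$ trivial. I would phrase this cleanly: $T$ trivial $\iff$ $\widehat T = 0$; if $\widehat T\neq 0$ then $\langle A,B\rangle$ acts on the nonzero lattice $\widehat T$ of rank $\le 2$, but its image in $\Aut(\widehat T)$ is finite (automorphism groups of the relevant rank-$\le 2$ lattices have bounded torsion) while $\langle A,B\rangle$ surjects onto an infinite group, a contradiction unless $\widehat T = 0$.

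\textbf{Main obstacle.} The one genuinely substantive point is establishing that $\langle A,B\rangle \le SL_2(\bZ)$ is infinite and, more precisely, that it cannot act with finite image on any nontrivial quotient lattice of $\bZ^2$. The infinitude is standard (e.g.\ $A^{-1}B=\begin{pmatrix}1&1\\0&1\end{pmatrix}$ or a conjugate thereof is unipotent of infinite order — I should just exhibit such an element explicitly). The "no nontrivial finite-image quotient" claim I expect to argue as follows: a quotient $\bZ^2\twoheadrightarrow L$ equivariant for $\langle A,B\rangle$ corresponds to a $\langle A,B\rangle$-invariant sublattice $M\le\bZ^2$ with $L=\bZ^2/M$; if $M\neq 0$ and $M\neq\bZ^2$ then $M$ has rank $1$ or $2$. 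Rank $2$ gives $L$ finite, on which the action trivially has finite image — but we must also handle the rank-$2$ case: there, $L$ finite forces $\phi(\bT^2)=T$ finite, hence (being a quotient of the connected $\bT^2$) trivial, so fine. Rank $1$: an invariant rank-$1$ sublattice gives an invariant line in $\bQ^2$, i.e.\ a common eigenvector of $A$ and $B$ over $\bQ$ — impossible since $A$ has no rational eigenvalue (characteristic polynomial $t^2+1$). So the only options are $M=\bZ^2$ (giving $L=0$, $T$ trivial) or $M$ of rank $2$ (giving $L$ finite, hence $T$ finite, hence $T$ trivial as $T$ is connected — wait, is $T$ connected? $T=\overline{\phi(\bT^2)}$ is the closure of a continuous image of a connected group, hence connected). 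Either way $T$ is trivial. I would lay this out carefully, since it is the only place where the specific matrices $A,B$ from \Cref{ex:notemb} are used, and it is exactly what makes this pair of semidirect products collapse while, say, a pair built from commuting elliptic matrices would not.
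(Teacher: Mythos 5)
Your overall reduction is the same as the paper's: the lemma amounts to showing that any pair of continuous morphisms $f_i:G_i\to K$ into a compact group agreeing on $\bT^2$ must annihilate $\bT^2$, after which both universal properties visibly coincide. That part is fine, as is the observation that $A^{-1}B=\left(\begin{smallmatrix}1&1\\0&1\end{smallmatrix}\right)$ makes $\langle A,B\rangle$ infinite, and the exclusion of rank-one invariant sublattices via the irrationality of the eigenvalues of $A$.

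The gap is in the case you never actually close: $\widehat{T}$ of rank $2$, i.e.\ $T=f_i(\bT^2)$ a $2$-torus. (Minor but symptomatic: $\widehat{T}$ is a \emph{subgroup} of $\bZ^2$, not a quotient — the dual of a surjection is an injection — and with either convention your case analysis covers $L=0$, $L$ finite, and the nonexistent rank-one case, but not $L\cong\bZ^2$.) To dispose of that case you assert that the image of $\langle A,B\rangle$ in $\Aut(\widehat T)$ is finite, justified by ``$\Aut(L)$ for $L$ of rank $\le 2$ admits no infinite image of $\langle A,B\rangle$'' — but $\Aut(\bZ^2)=GL(2,\bZ)$ contains $\langle A,B\rangle=SL(2,\bZ)$ itself, so this is false as stated; bounded torsion of $GL(2,\bZ)$ is irrelevant, since the image need not be torsion. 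The finiteness of the image is true, but it is exactly the point at which the compactness of $K$ must enter (the statement fails for, say, the discrete group $\bZ^2\rtimes SL(2,\bZ)$), and your proposal supplies no such input. Two standard ways to fill the hole: (i) the paper's route — every character $\chi$ of $T$ occurs in the restriction of some finite-dimensional representation $\rho$ of $K$ (Frobenius reciprocity), the set of characters of $T$ occurring in $\rho|_T$ is invariant under conjugation by $f_0(a)$ and $f_1(b)$, hence under $\langle A,B\rangle=SL(2,\bZ)$, and since every nonzero vector of $\bZ^2$ has infinite $SL(2,\bZ)$-orbit while $\dim\rho<\infty$, necessarily $\chi=0$; or (ii) note that the closed subgroup $N:=\overline{\langle T,f_0(a),f_1(b)\rangle}\le K$ is compact, normalizes $T$, and the conjugation action gives a continuous homomorphism $N\to\Aut(T)\cong GL(\dim T,\bZ)$ with discrete target, hence finite image — which, since $SL(2,\bZ)$ acts faithfully on any finite-index sublattice of $\bZ^2$, rules out rank $2$. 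With either supplement your argument goes through; without one, the crucial case is unproven.
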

\begin{proof}
  The claim amounts to showing that any pair of continuous morphisms $\varphi_i:G_i\to G$ which agree on $\bT^2$ must annihilate the latter.

  To see this, set
  \begin{equation*}
    K:=\bT^2/\ker~\varphi_i|_{\bT^2}.
  \end{equation*}
  Every character
  \begin{equation*}
    \chi\in\widehat{K}\subseteq \widehat{\bT^2}\cong \bZ^2
  \end{equation*}
  is contained in the restriction of some finite-dimensional $G$-representation $\rho$. The restriction $\rho|_{H}$ contains, along with $\chi$, all of its $\bZ/4$- and $\bZ/6$-iterates. Since the two groups generate $\mathrm{SL}(2,\bZ)$ \cite[equation (3) following Proposition 1.18]{Hain11}, all non-zero $\chi\in \bZ^2$ have infinite orbit. The only way $\rho$ could be finite-dimensional, then, is for $\chi$ to be trivial. Because it was an arbitrary character of the compact abelian group $K$, the latter must be trivial as well.
\end{proof}

\begin{remarks}\label{res:gpnotemb}
  \begin{enumerate}[(1)]

  \item\label{item:5} Note, incidentally, that the same \Cref{ex:notemb} also shows that in general, in the category of compact groups, $G_i\to G_0\coprod_H G_1$ need not be embeddings. Contrast this with plain-group pushouts, where the respective maps {\it are} embeddings \cite[Theorem 11.67 (i)]{rot-gp}, or again with the $C^*$ version of this same positive result in \cite[Theorem 4.2]{ped-psh}.
  \item\label{item:7} On the other hand, in any category (where the relevant pushouts exist), the canonical maps $G\to G\coprod_HG$ into the {\it self}-pushout of a single map $\iota:H\to G$ are always left-invertible, with the common left inverse given by the morphism
    \begin{equation}\label{eq:ggg}
      G\coprod_HG\to G\text{ with components }G\xrightarrow{\quad\id\quad} G.
    \end{equation}
    In particular, such maps $G\to G\coprod_HG$ are embeddings in categories that are {\it concrete} (over \cat{Set}; e.g. the category \cat{CGp}), i.e. those whose objects are (essentially) sets and whose morphisms are functions between those sets \cite[Definition 5.1]{ahs}.
  \item\label{item:6} In fact, the pathology in \Cref{ex:notemb} all stemmed from one's inability to embed both $G_i$ (separately) into a single compact group so as to have the embeddings agree on the common subgroup $H=\bT^2\le G_i$. \Cref{le:ifcommonemb} formalizes this, given that the {\it monomorphisms} \cite[Definition 7.32]{ahs} in \cat{CGp} are precisely the one-to-one morphisms (as can easily be seen).
  \end{enumerate}
\end{remarks}

\begin{lemma}\label{le:ifcommonemb}
  Let $\iota_i : H\to G_i$, $i=0,1$ be monomorphisms in a category with pushouts.

  If $G_i$ admit monomorphisms $j_i$ into an object $G$ such that $j_i\iota_i:H\to G$ coincide then the structure maps $G_i\to G_0\coprod_HG_1$ are monic.
\end{lemma}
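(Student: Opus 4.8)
The plan is to read off the conclusion from the universal property of the pushout together with the elementary fact that a morphism is monic as soon as one of its post-compositions is. Write $k_i\colon G_i\to G_0\coprod_H G_1$ for the two structure maps, which satisfy $k_0\iota_0=k_1\iota_1$ by construction of the pushout. The hypothesis $j_0\iota_0=j_1\iota_1$ says exactly that the pair $(j_0,j_1)$ forms a cocone on the span $G_0\xleftarrow{\ \iota_0\ }H\xrightarrow{\ \iota_1\ }G_1$ with vertex $G$; the defining universal property of $G_0\coprod_H G_1$ therefore produces a (unique) morphism $\varphi\colon G_0\coprod_H G_1\to G$ with $\varphi k_i=j_i$ for $i=0,1$.

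It then remains to invoke the observation that if $\varphi\circ k_i$ is monic then so is $k_i$: given parallel morphisms $a,b$ with $k_i a=k_i b$ we obtain $j_i a=\varphi k_i a=\varphi k_i b=j_i b$, whence $a=b$ since $j_i$ is monic by assumption. As each $j_i=\varphi k_i$ is monic, each $k_i$ is monic, which is the assertion.

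There is no real obstacle here; the proof is a couple of lines. The only points worth care are using the correct direction of the ``composite monic implies right-hand factor monic'' lemma, and noting that the compatibility $j_0\iota_0=j_1\iota_1$ is precisely what makes $(j_0,j_1)$ a legitimate cocone, so that the universal property applies. (The monomorphism hypothesis on the $\iota_i$ is not actually used in this particular lemma—only that on the $j_i$ is—so I would not dwell on it, though it is the relevant case in the applications.)
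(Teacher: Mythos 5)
Your proof is correct. It shares its final step with the paper's argument (left-cancellability of monomorphisms: if $\psi k_i$ is monic then so is $k_i$), but it gets to a monic composite by a more direct route. The paper post-composes the structure maps with the induced morphism $G_0\coprod_H G_1\to G\coprod_H G$ and argues that the composite $G_i\to G\to G\coprod_H G$ is monic because $j_i$ is monic and the structure maps of a \emph{self}-pushout are split monic (the fold map $G\coprod_H G\to G$ provides a common left inverse). You instead observe that $(j_0,j_1)$ is already a cocone on the span, so the universal property gives $\varphi\colon G_0\coprod_H G_1\to G$ with $\varphi k_i=j_i$, and the monic composite is handed to you by hypothesis. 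In effect your $\varphi$ is the paper's induced map followed by the fold map, so you have collapsed the detour through the self-pushout and dispensed with the split-monomorphism observation altogether; this is a genuine (if small) simplification. Your parenthetical remark that the monomorphism hypothesis on the $\iota_i$ is not used is also accurate --- the paper's proof does not use it either.
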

\begin{proof}
  . The conclusion now follows from the fact that the top composition in
  \begin{equation*}
    \begin{tikzpicture}[auto,baseline=(current  bounding  box.center)]
      \path[anchor=base] 
      (0,0) node (l) {$G_i$}
      +(2,.5) node (u) {$G$}
      +(2,-.5) node (d) {$G_0\coprod_HG_1$}
      +(4,0) node (r) {$G\coprod_HG$}
      ;
      \draw[->] (l) to[bend left=6] node[pos=.5,auto] {$\scriptstyle $} (u);
      \draw[->] (u) to[bend left=6] node[pos=.5,auto] {$\scriptstyle $} (r);
      \draw[->] (l) to[bend right=6] node[pos=.5,auto,swap] {$\scriptstyle $} (d);
      \draw[->] (d) to[bend right=6] node[pos=.5,auto,swap] {$\scriptstyle $} (r);
    \end{tikzpicture}
  \end{equation*}
  is monic (the left-hand arrow is by assumption, and the right-hand arrow is left-invertible, as in \Cref{res:gpnotemb} \Cref{item:7}). The bottom composition is thus also monic, and hence so is the bottom left-hand arrow, because monomorphisms are stable under left cancellation: if $fg$ is monic, so is $g$.
\end{proof}

A handy phrase for the sort of setup discussed in \Cref{le:ifcommonemb}:

\begin{definition}\label{def:cohemb}
  For monomorphisms $\iota_i:H\to G_i$ in a category with pushouts, the objects $G_i$ are {\it ($\iota_i$)- or $H$-coherently embeddable} (or just plain {\it coherently embeddable} when the context is clear) if there are monomorphisms $j_i:G_i\to G$ with equal $j_i\iota_i:H\to G$.

  Alternatively, we might refer to the pair $(\iota_i)$ itself as being coherently embeddable.
\end{definition}

\Cref{pr:intsmall} says, roughly, that under conditions that obtain in the applications below ``the intersection of $G_i$ in a pushout $G_1\coprod_H G_1$ equals $H$''. Intersections, which do not make sense as such in arbitrary categories, can be cast instead as pullbacks; and to make sense of the statement, recall that a monomorphism is {\it regular} \cite[Definition 7.56]{ahs} if it is an equalizer.

\begin{proposition}\label{pr:intsmall}
  Consider a coherently-embeddable pair $\iota_i:H\to G_i$, $i=0,1$ of monomorphisms in a category with pushouts and in which monomorphisms are regular.

  The diagram
  \begin{equation*}
    \begin{tikzpicture}[auto,baseline=(current  bounding  box.center)]
      \path[anchor=base] 
      (0,0) node (l) {$H$}
      +(2,.5) node (u) {$G_0$}
      +(2,-.5) node (d) {$G_1$}
      +(4,0) node (r) {$G_0\coprod_H G_1$}
      ;
      \draw[->] (l) to[bend left=6] node[pos=.5,auto] {$\scriptstyle \iota_0$} (u);
      \draw[->] (u) to[bend left=6] node[pos=.5,auto] {$\scriptstyle $} (r);
      \draw[->] (l) to[bend right=6] node[pos=.5,auto,swap] {$\scriptstyle \iota_1$} (d);
      \draw[->] (d) to[bend right=6] node[pos=.5,auto,swap] {$\scriptstyle $} (r);
    \end{tikzpicture}
  \end{equation*}
  is then a pullback.
\end{proposition}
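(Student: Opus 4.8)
The plan is to not argue directly inside the object $G$ supplied by coherent embeddability, but to pass instead through the self-pushout of the morphism $\kappa:=j_0\iota_0=j_1\iota_1:H\to G$. Here $\kappa$ is a monomorphism, being a composite of the monomorphisms $\iota_0$ and $j_0$, hence (monomorphisms being regular) a regular monomorphism. Form the cokernel pair $e_0,e_1:G\to G\coprod_H G$, so that $e_0\kappa=e_1\kappa$. The pair $\bigl(e_0 j_0:G_0\to G\coprod_H G,\ e_1 j_1:G_1\to G\coprod_H G\bigr)$ agrees on $H$ (both relevant composites equal $e_0\kappa=e_1\kappa$), so by the universal property of the pushout $P:=G_0\coprod_H G_1$ there is a comparison morphism $\psi:P\to G\coprod_H G$ with $\psi s_0=e_0 j_0$ and $\psi s_1=e_1 j_1$, where $s_0,s_1$ are the structure maps of $P$. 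I will show that the square obtained by composing $\iota_0,\iota_1$ with $e_0 j_0,e_1 j_1$ --- the outer square with vertex $G\coprod_H G$ --- is a pullback; the proposition then follows formally, since a test cone $a:T\to G_0$, $b:T\to G_1$ with $s_0 a=s_1 b$ composes with $\psi$ to a test cone $e_0 j_0 a=e_1 j_1 b$ for that outer square, the induced $u:T\to H$ satisfies $\iota_0 u=a$ and $\iota_1 u=b$, and uniqueness of $u$ is immediate from $\iota_0$ being monic.

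The heart of the matter --- and the only place the regular-monomorphism hypothesis is used --- is that this outer square is a pullback. First I would record that $e_0$ and $e_1$ admit the common retraction $r:G\coprod_H G\to G$ induced by $(\mathrm{id}_G,\mathrm{id}_G)$, so that any $z_0,z_1:T\to G$ with $e_0 z_0=e_1 z_1$ must coincide, upon applying $r$. Second, since $\kappa$ is a regular monomorphism and its cokernel pair exists, $\kappa$ is the equalizer of that cokernel pair, $\kappa=\mathrm{eq}(e_0,e_1)$: writing $\kappa=\mathrm{eq}(f,g)$ for suitable $f,g:G\to C$, the relation $f\kappa=g\kappa$ induces $h:G\coprod_H G\to C$ with $h e_0=f$ and $h e_1=g$, whence anything equalizing $e_0,e_1$ equalizes $f,g$ and so factors uniquely through $\kappa$. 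Now, given $a:T\to G_0$ and $b:T\to G_1$ with $e_0 j_0 a=e_1 j_1 b$, the first point forces $j_0 a=j_1 b$; calling this morphism $z$, the second point produces a unique $w:T\to H$ with $\kappa w=z$; and then $j_0$ monic gives $a=\iota_0 w$ while $j_1$ monic gives $b=\iota_1 w$, with uniqueness of $w$ once more from $\iota_0$ monic. This is exactly the pullback property of the outer square, and completes the argument.

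The single genuine obstacle is conceptual rather than technical: one must resist trying to prove ``$H=G_0\times_G G_1$'', which is false in general --- the images of $G_0$ and $G_1$ in $G$ can meet in a subobject strictly larger than $H$ (take $G_0=G_1=G$ with $H$ proper) --- so coherent embeddability alone does not suffice, and the correct surrogate for $G$ is the self-pushout $G\coprod_H G$, in which regularity of monomorphisms promotes the diagonal copy of $H$ to an honest intersection. Everything after that choice is diagram-chasing of the kind sketched above. One may view the statement as the category-theoretic shadow of the fact that the two factors of an amalgamated free product of groups meet precisely in the amalgam, the failure of the analogue in \cat{CGp} for non-coherently-embeddable families (as in \Cref{ex:notemb}) being precisely what the hypothesis rules out.
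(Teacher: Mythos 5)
Your proposal is correct and follows essentially the same route as the paper: both arguments transport the problem to the self-pushout $G\coprod_HG$ of the common embedding $\kappa=j_i\iota_i:H\to G$, use regularity of monomorphisms to see that $H\rightrightarrows G\to G\coprod_HG$ is a pullback, and finish with the same diagram chase using that the $j_i$ are monic. The only difference is cosmetic: where the paper cites the dual of a result in Ad\'amek--Herrlich--Strecker to conclude that the self-pushout of a regular mono is a pulation square, you prove that special case by hand via the common retraction of the cokernel pair and the fact that a regular monomorphism is the equalizer of its cokernel pair.
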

\begin{proof}
  Consider embeddings $j_i:G_i\to G$ as in \Cref{def:cohemb}, which will produce a pushout $G\coprod_HG$ fitting into the diagram
  \begin{equation*}
    \begin{tikzpicture}[auto,>=stealth,baseline=(current  bounding  box.center)]
      \path[anchor=base] 
      (0,0) node (l) {$H$}
      +(2,.5) node (u) {$G_0$}
      +(2,-.5) node (d) {$G_1$}
      +(4,0) node (r) {$G_0\coprod_H G_1$}
      +(4,2) node (uu) {$G$}
      +(4,-2) node (dd) {$G$}
      +(7,0) node (rr) {$G\coprod_H G$}
      ;
      \draw[->] (l) to[bend left=6] node[pos=.5,auto] {$\scriptstyle \iota_0$} (u);
      \draw[->] (u) to[bend left=6] node[pos=.5,auto] {$\scriptstyle $} (r);
      \draw[->] (l) to[bend right=6] node[pos=.5,auto,swap] {$\scriptstyle \iota_1$} (d);
      \draw[->] (d) to[bend right=6] node[pos=.5,auto,swap] {$\scriptstyle $} (r);
      \draw[->] (r) to[bend right=0] node[pos=.5,auto] {$\scriptstyle j_0*j_1$} (rr);
      \draw[->] (l) to[bend right=-25] node[pos=.5,auto] {$\scriptstyle $} (uu);
      \draw[->] (l) to[bend right=25] node[pos=.5,auto] {$\scriptstyle $} (dd);
      \draw[->] (uu) to[bend right=-25] node[pos=.5,auto] {$\scriptstyle $} (rr);
      \draw[->] (dd) to[bend right=25] node[pos=.5,auto] {$\scriptstyle $} (rr);
      \draw[->] (u) to[bend right=-10] node[pos=.5,auto,swap] {$\scriptstyle j_0$} (uu);
      \draw[->] (d) to[bend right=10] node[pos=.5,auto] {$\scriptstyle j_1$} (dd);
    \end{tikzpicture}
  \end{equation*}
  The outer square is a pushout and the outer left-hand maps are monic and hence regular monic by assumption. The dual result to \cite[Proposition 11.33]{ahs} then implies that that outer square is a {\it pulation square}, i.e. a pullback as well as a pushout. In short, the statement of the proposition holds for pairs of {\it identical} monomorphisms $H\to G$.

  The rest is a straightforward diagram chase: if $f_i:X\to G_i$ compose with $G_i\to G_0\coprod_H G_1$ to give equal morphisms into the latter object, then they further give equal morphisms into $G\coprod_H G$ and hence
  \begin{equation*}
    \begin{tikzpicture}[auto,baseline=(current  bounding  box.center)]
      \path[anchor=base] 
      (0,0) node (l) {$X$}
      +(2,.5) node (u) {$H$}
      +(4,0) node (r) {$G$}
      ;
      \draw[->] (l) to[bend left=6] node[pos=.5,auto] {$\scriptstyle f$} (u);
      \draw[->] (u) to[bend left=6] node[pos=.5,auto] {$\scriptstyle $} (r);
      \draw[->] (l) to[bend right=6] node[pos=.5,auto,swap] {$\scriptstyle j_i f_i$} (r);
    \end{tikzpicture}
  \end{equation*}
  commute for a unique map $f$. But then 
  \begin{equation*}
    \begin{tikzpicture}[auto,baseline=(current  bounding  box.center)]
      \path[anchor=base] 
      (0,0) node (l) {$X$}
      +(2,.5) node (u) {$H$}
      +(4,0) node (r) {$G_i$}
      ;
      \draw[->] (l) to[bend left=6] node[pos=.5,auto] {$\scriptstyle f$} (u);
      \draw[->] (u) to[bend left=6] node[pos=.5,auto] {$\scriptstyle $} (r);
      \draw[->] (l) to[bend right=6] node[pos=.5,auto,swap] {$\scriptstyle f_i$} (r);
    \end{tikzpicture}
  \end{equation*}
  must also commute, $j_i$ being embeddings.
\end{proof}

\begin{corollary}\label{cor:cgp-intsmall}
  Let $\iota_i:H\le G_i$, $i=0,1$ be a coherently-embeddable pair of monomorphisms in \cat{CGp}.

  In the $\cat{CGp}$-pushout $G_0\coprod_H G_1$ we have $G_0\cap G_1 = H$.
\end{corollary}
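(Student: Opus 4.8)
The plan is to deduce the statement from \Cref{pr:intsmall} by checking that \cat{CGp} meets its hypotheses, and then to translate the resulting abstract pullback into the concrete intersection assertion.

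First I would verify the structural hypotheses of \Cref{pr:intsmall}. That \cat{CGp} has pushouts is already on record (a \cat{CGp}-pushout is the Bohr compactification of the algebraic one; see the discussion around \Cref{re:bohr}). That monomorphisms in \cat{CGp} are regular is essentially \Cref{th:cpcteq}: a \cat{CGp}-monomorphism is a one-to-one morphism (cf.\ \Cref{res:gpnotemb} \Cref{item:6}), hence a continuous injection of a compact group into a Hausdorff group, hence a topological embedding; and \Cref{th:cpcteq} says embeddings of compact groups are equalizers, i.e.\ regular monomorphisms. So \Cref{pr:intsmall} applies to the coherently-embeddable pair $\iota_i:H\le G_i$, and the square with corners $H,G_0,G_1,G_0\coprod_H G_1$ is a pullback in \cat{CGp}.

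It then remains to unwind what a pullback means here. Write $P:=G_0\coprod_H G_1$ and let $\kappa_i:G_i\to P$ be the structure maps; \Cref{le:ifcommonemb}, whose hypothesis is exactly coherent embeddability, makes the $\kappa_i$ monic, hence injective, so we may regard $G_0,G_1$ and $H$ as subgroups of $P$ --- the last via the common composite $\kappa_0\iota_0=\kappa_1\iota_1$ --- and then $H\subseteq G_0\cap G_1$ holds tautologically. Now the forgetful functor $\cat{CGp}\to\cat{Set}$ preserves limits (compact groups are closed under limits formed in topological groups, and those are computed on underlying sets), so the pullback of $\kappa_0$ and $\kappa_1$ has underlying set the genuine fibre product $\{(x,y)\in G_0\times G_1:\kappa_0(x)=\kappa_1(y)\}$, which injectivity of the $\kappa_i$ identifies with $G_0\cap G_1$ inside $P$. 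Since \Cref{pr:intsmall} identifies that same pullback with $H$ (compatibly, via the $\iota_i$, with the maps into $P$), we get $G_0\cap G_1=H$.

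I do not anticipate a real obstacle; the one point needing care is the bookkeeping in the last step --- making sure the abstract pullback object produced by \Cref{pr:intsmall} is literally the subgroup $G_0\cap G_1$ of $P$, rather than an abstractly isomorphic copy of $H$ mapped in by unrelated maps. This is settled by tracking the cone maps: the pullback carries projections to $G_0$ and $G_1$ agreeing after composition with the $\kappa_i$, and \Cref{pr:intsmall} says these projections are $\iota_0$ and $\iota_1$; running this through the identifications above pins the pullback down as exactly $G_0\cap G_1$.
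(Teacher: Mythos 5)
Your argument is correct and follows the same route as the paper: a direct application of \Cref{pr:intsmall}, using \Cref{le:ifcommonemb} to make the structure maps $G_i\to G_0\coprod_H G_1$ embeddings, \Cref{th:cpcteq} to see that \cat{CGp}-monomorphisms are regular, and the identification of the set-theoretic intersection with the categorical pullback. Your extra care in unwinding the pullback concretely (via preservation of limits by the forgetful functor) is just a more explicit version of what the paper leaves implicit.
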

\begin{proof}
  This is a direct application of \Cref{pr:intsmall}, given that
  \begin{itemize}
  \item $G_i\to G_0\coprod_H G_1$ are indeed embeddings (\Cref{le:ifcommonemb});
  \item the intersection in the statement is precisely the pullback of those embeddings;
  \item and in \cat{CGp} embeddings are equalizers (i.e. monomorphisms are regular) by \Cref{th:cpcteq}. 
  \end{itemize}
\end{proof}

\begin{remark}\label{re:plaingpint}
  The ordinary-group version of \Cref{cor:cgp-intsmall} holds without qualification, i.e. for arbitrary pushouts (of embeddings) in \cat{Gp} \cite[Theorem 11.67 (ii)]{rot-gp}.
\end{remark}


Consider, now, a compact-group (multi-)pushout $\coprod_H G_i$. The map
\begin{equation}\label{eq:alg2cpct}
  \coprod_{H,\cat{Gp}} G_i\to \coprod_H G_i
\end{equation}
from the {\it ordinary}-group pushout sometimes is an embedding, and sometimes isn't (\Cref{ex:notemb}); the following term captures the relevant notion:

\begin{definition}\label{def:algsnd}
  A collection $\iota_i:H\to G_i$ of compact-group embeddings is {\it algebraically sound} if the map \Cref{eq:alg2cpct} from the corresponding group-theoretic pushout to its compact-group counterpart is one-to-one.
\end{definition}

\begin{remarks}\label{res:hulan}
  \begin{enumerate}[(1)]
  \item In these terms, \cite[Proposition 1]{hul_map} can be phrased as saying that every family $\{1\}\to G_i$ of trivial embeddings in \cat{CGp} is algebraically sound.
    
  \item\label{item:9} Motivated in part by the previous observation and in part by the analogy to algebras, where families of embeddings $B\to A_i$ that {\it split} as bimodule maps (i.e. admit left inverses in the category of $B$-bimodules) are particularly well-behaved for the purpose of describing their associated pushouts \cite[Corollary 8.1]{bg-diamond}, one might hope that families of split embeddings in \cat{CGp} are similarly pleasant. We address the matter below.


  \end{enumerate}  
\end{remarks}

One immediate remark:

\begin{lemma}\label{le:snd2coh}
  An algebraically sound family of embeddings of compact groups is coherently embeddable in \cat{CGp} in the sense of \Cref{def:cohemb}. 
\end{lemma}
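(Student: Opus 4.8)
The plan is to take the compact-group multi-pushout itself as the ambient group. Put $G:=\coprod_{H,\cat{CGp}}G_i$ and let $j_i:G_i\to G$ be its structure maps. By construction of the pushout cocone the composites $j_i\iota_i:H\to G$ all coincide, each being the canonical map from the amalgam $H$ into $\coprod_{H,\cat{CGp}}G_i$; so, the monomorphisms in \cat{CGp} being exactly the injective morphisms (as recorded in \Cref{res:gpnotemb}), the only thing left to verify is that each $j_i$ is one-to-one.

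For this I would factor $j_i$ through the group-theoretic pushout. Let $k_i:G_i\to\coprod_{H,\cat{Gp}}G_i$ be the structure maps of the ordinary-group colimit. The compatible family $\{j_i\}$ forces, by the universal property of $\coprod_{H,\cat{Gp}}G_i$, a factorization of $j_i$ as $k_i$ followed by the comparison morphism \eqref{eq:alg2cpct}. Now $k_i$ is injective, the structure maps into a free product amalgamated over a common subgroup always being embeddings (the two-factor case is \cite[Theorem 11.67 (i)]{rot-gp}, and the general case is the same normal-form argument), while \eqref{eq:alg2cpct} is injective precisely because $(\iota_i)$ was assumed algebraically sound (\Cref{def:algsnd}). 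Being a composite of injective maps, $j_i$ is then an injective continuous homomorphism of compact Hausdorff groups, hence a \cat{CGp}-monomorphism (indeed a closed topological embedding). Thus the $j_i:G_i\to G$ are monomorphisms agreeing on $H$, which is exactly the requirement of \Cref{def:cohemb}.

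I do not expect a genuine obstacle; the only point requiring a moment's care is that $j_i$ equals $k_i$ composed with \eqref{eq:alg2cpct}. This is a purely formal consequence of the two universal properties involved: the structure maps $j_i$ make $\coprod_{H,\cat{CGp}}G_i$ into a cocone under the original diagram, so it factors through the algebraic pushout $\coprod_{H,\cat{Gp}}G_i$, and that factorization is, by its very definition, the comparison map \eqref{eq:alg2cpct}.
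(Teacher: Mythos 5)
Your proof is correct and is essentially the paper's own argument: the paper likewise observes that algebraic soundness makes the structure maps $G_i\to\coprod_{H,\cat{CGp}}G_i$ into embeddings (since they factor as the injective algebraic structure maps followed by the injective comparison map \Cref{eq:alg2cpct}), and that these maps agree on $H$ and hence witness coherent embeddability. You have merely spelled out the factorization and the injectivity of the algebraic structure maps in more detail than the paper does.
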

\begin{proof}
  For a family of embeddings $H\le G_i$, algebraic soundness ensures in particular that the structure maps $G_i\to \coprod_{H,\cat{CGp}}G_i$ of the multi-pushout are embeddings. They agree on $H$, so said maps will witness the coherent embeddability.
\end{proof}

\Cref{le:famsubfam} records another observation, useful on occasion. Recall that a {\it filtered} (or {\it directed} \cite[Definition 11.1]{wil-top}) poset $(P,\le)$ is one for which any finite set of elements is dominated by another.

\begin{lemma}\label{le:famsubfam}
  Let $\iota_i:H\le G_i$, $i\in I$ be a family of embeddings in \cat{CGp}.
  \begin{enumerate}[(1)]
  \item\label{item:fam-subfam} If $(\iota_i)$ is algebraically sound, so is every subfamily.

  \item\label{item:subfam-fam} Conversely, if $(\iota_i)$ is filtered union of algebraically sound subfamilies $(\iota_j)_{j\in J}$ for which
    \begin{equation*}
      \coprod_{H,\cat{Gp}}G_j
      \to
      \coprod_{H,\cat{Gp}}G_i
    \end{equation*}
    splits, then $(\iota_i)$ is algebraically sound. 
  \end{enumerate}
\end{lemma}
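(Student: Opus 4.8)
The guiding point is a naturality square: for any $J\subseteq I$ there is a commuting diagram

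\begin{tikzpicture}[auto,baseline=(current bounding box.center)]
  \path[anchor=base]
  (0,0) node (a) {$\coprod_{H,\cat{Gp}}G_j$}
  +(4.5,0) node (b) {$\coprod_{H,\cat{Gp}}G_i$}
  +(0,-1.3) node (c) {$\coprod_{H,\cat{CGp}}G_j$}
  +(4.5,-1.3) node (d) {$\coprod_{H,\cat{CGp}}G_i$}
  ;
  \draw[->] (a) -- (b);
  \draw[->] (c) -- (d);
  \draw[->] (a) -- (c);
  \draw[->] (b) -- (d);
\end{tikzpicture}

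whose vertical maps are the algebraic-to-compact comparison maps of \Cref{def:algsnd} and whose horizontal maps are induced by the inclusion $J\subseteq I$; moreover the top map, between the \emph{ordinary}-group pushouts, is injective by standard amalgamated-free-product theory (the normal form theorem, or associativity of the amalgam combined with \cite[Theorem 11.67]{rot-gp}). Part \Cref{item:fam-subfam} then falls out at once: given $1\ne y\in\coprod_{H,\cat{Gp}}G_j$, its image across the top is nontrivial, hence its image in $\coprod_{H,\cat{CGp}}G_i$ is nontrivial by the algebraic soundness of $(\iota_i)$, and commutativity forces the image of $y$ in $\coprod_{H,\cat{CGp}}G_j$ to be nontrivial too, i.e.\ the left vertical map is one-to-one.

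For \Cref{item:subfam-fam} the plan is to first upgrade the splitting hypothesis from the algebraic to the compact level. Fix one of the sound subfamilies $J$ together with a continuous retraction $r_J$ of $\coprod_{H,\cat{Gp}}G_j\hookrightarrow\coprod_{H,\cat{Gp}}G_i$ (available by hypothesis). Composing each structure inclusion $G_i\hookrightarrow\coprod_{H,\cat{Gp}}G_i$ with $r_J$ and then with the canonical map to $\coprod_{H,\cat{CGp}}G_j$ yields continuous homomorphisms $G_i\to\coprod_{H,\cat{CGp}}G_j$ that agree on $H$; by the universal property of the compact multi-pushout they assemble into a continuous $\rho_J\colon\coprod_{H,\cat{CGp}}G_i\to\coprod_{H,\cat{CGp}}G_j$, and a routine check — both $\rho_J\circ(\text{canonical map})$ and the identity restrict to the structure maps on each $G_j$, $j\in J$ — shows $\rho_J$ retracts $\coprod_{H,\cat{CGp}}G_j\to\coprod_{H,\cat{CGp}}G_i$, so the latter is injective. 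Now take $1\ne x\in\coprod_{H,\cat{Gp}}G_i$. Since any element of the algebraic pushout involves only finitely many factors and the sound subfamilies are filtered, $x$ lies in the image of $\coprod_{H,\cat{Gp}}G_j$ for some such $J$; its (unique) preimage there is nontrivial, maps to a nontrivial element of $\coprod_{H,\cat{CGp}}G_j$ by the soundness of $(\iota_j)$, and then to a nontrivial element of $\coprod_{H,\cat{CGp}}G_i$ by the injectivity of $\coprod_{H,\cat{CGp}}G_j\to\coprod_{H,\cat{CGp}}G_i$ — which, by the naturality square, is exactly the image of $x$. Hence $(\iota_i)$ is algebraically sound.

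The step to watch is the continuity bookkeeping in \Cref{item:subfam-fam}: one must use that the retraction is a morphism of topological groups, so that its composites with the (continuous) structure inclusions $G_i\hookrightarrow\coprod_{H,\cat{Gp}}G_i$ are continuous and therefore descend along the Bohr-compactification map into $\coprod_{H,\cat{CGp}}G_j$; a merely algebraic section would be useless here, since homomorphisms between compact groups need not be continuous. Everything else is formal diagram-chasing together with the classical fact about ordinary amalgams invoked in the first paragraph.
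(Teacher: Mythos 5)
Your proof is correct and follows essentially the same route as the paper's: part (1) is the identical factorization-through-the-larger-pushout argument, and part (2) reduces a nontrivial element to a finite (hence, by filteredness, sound) subfamily $J$ and transfers the algebraic splitting to a compact-group retraction of $\coprod_{H,\cat{CGp}}G_j\to\coprod_{H,\cat{CGp}}G_i$ via the universal property. The only difference is that you spell out the continuity bookkeeping the paper compresses into ``splits because its algebraic counterpart does''; your observation that the retraction must restrict continuously to each $G_i$ for the descent to the Bohr compactification to work is a fair point, and that condition does hold in every application the paper makes of the lemma.
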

\begin{proof}
  Part \Cref{item:fam-subfam} is immediate: for a subfamily $\iota_j:H\to G_j$, $j\in J\subseteq I$ we have a factorization
  \begin{equation*}
    \begin{tikzpicture}[auto,baseline=(current  bounding  box.center)]
      \path[anchor=base] 
      (0,0) node (l) {$\coprod_{\cat{Gp}}G_j$}
      +(4,.5) node (u) {$\coprod_{\cat{Gp}}G_i$}
      +(4,-.5) node (d) {$\coprod_{\cat{CGp}}G_j$}
      +(8,0) node (r) {$\coprod_{\cat{CGp}}G_i$}
      ;
      \draw[->] (l) to[bend left=6] node[pos=.5,auto] {$\scriptstyle $} (u);
      \draw[->] (u) to[bend left=6] node[pos=.5,auto] {$\scriptstyle $} (r);
      \draw[->] (l) to[bend right=6] node[pos=.5,auto,swap] {$\scriptstyle $} (d);
      \draw[->] (d) to[bend right=6] node[pos=.5,auto,swap] {$\scriptstyle $} (r);
    \end{tikzpicture}
  \end{equation*}
  with the upper left-hand map one-to-one. If the upper {\it right}-hand map is also injective (the hypothesis), then so is the bottom left-hand morphism.

  For part \Cref{item:subfam-fam}, note that the algebraic multi-pushout $\coprod_{H,\cat{Gp}}G_i$ is the union of the pushouts induced by the finite subfamilies $F\subseteq I$. It is thus enough to argue that an element $x$ of, say $\coprod_{H,\cat{Gp}}G_f$ (with $f\in F$) is not annihilated by the mapping to $\coprod_{H,\cat{CGp}}G_i$. Because $I$ is assumed to be a {\it filtered} union of the $J$, we have $F\subset J$ for some such $J$.

  We are assuming that $x$ is not trivial in $\coprod_{H,\cat{CGp}}G_j$, and
  \begin{equation*}
    \coprod_{H,\cat{CGp}}G_j\to \coprod_{H,\cat{CGp}}G_i
  \end{equation*}
  splits because its algebraic counterpart does. It must thus be injective, and $x$ is non-trivial in $\coprod_{H,\cat{CGp}}G_i$.
\end{proof}

\begin{remark}\label{re:km}
  The most natural application of \Cref{le:famsubfam} \Cref{item:subfam-fam} is to leverage algebraic embeddability from {\it finite} subfamilies to the entire family.
  
  One might wonder whether that portion of the lemma goes through without additional assumptions on $(\iota_i)$, such the splittings. That {\it some} condition is needed can be seen even from examples of central embeddings. {\it Finite} families of central embeddings in \cat{CGp} are algebraically sound by \cite[Theorem 8]{km_central-2} (and induction, since that result is about pairs of central embeddings), but infinite families need not be: see \Cref{ex:heis}.
\end{remark}

\begin{example}\label{ex:heis}
  For positive integers $i$, let $G_i$ be the {\it Heisenberg group $H_{2^i}$ over $\bZ/2^i$}, of order $(2^i)^3=2^{3i}$ \cite[\S 2]{schulte}: the multiplicative group of unipotent $3\times 3$ matrices with entries in $\bZ/2^i$.

  Each $G_i$ has a center isomorphic to (the additive group) $\bZ/2^i$, consisting of matrices
  \begin{equation*}
    \begin{pmatrix}
      1&0&x\\
      0&1&0\\
      0&0&1
    \end{pmatrix},\ x\in \bZ/2^i,
  \end{equation*}
  and $H\le G_i$ will be the embeddings of a single copy of $\bZ/2$ into these cyclic centers. I claim that this family is not even coherently embeddable: every family of $\cat{CGp}$-morphisms $\varphi:G_i\to K$ equal on $H$ must in fact annihilate $H$.

  Indeed, were it not so, the non-trivial character of $H$ would be the restriction along (any of) $\varphi_i$ of a finite-dimensional representation $\rho$ of $K$. But the classification of the representations of $H_{2^i}$ \cite[Theorem 3]{schulte} makes it clear that the only irreducible representations that restrict to a non-trivial character on $\bZ/2$ are $2^i$-dimensional. There is thus no bound on the requisite dimension of $\rho|_{H_{2^i}}$, and we have a contradiction.
\end{example}


\subsection{Families of normal embeddings}\label{subse:norm}

Given the various ways algebraic soundness can fail for families of seemingly well-behaved embeddings (finite families of normal embeddings in \Cref{ex:notemb}, arbitrary families of central embeddings in \Cref{ex:heis}), it seems pertinent to try to isolate (necessary and) sufficient conditions on a family of {\it normal} embeddings that will ensure its algebraic soundness. The following result does this.

\begin{theorem}\label{th:normasound}
  A family $\iota:H\trianglelefteq G_i$ of normal embeddings of compact groups is algebraically sound if and only if it is coherently embeddable in \cat{CGp} in the sense of \Cref{def:cohemb}.
\end{theorem}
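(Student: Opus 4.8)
The plan is to prove the non-trivial implication: coherent embeddability of a family of \emph{normal} embeddings $\iota_i:H\trianglelefteq G_i$ forces algebraic soundness. (The converse is \Cref{le:snd2coh}, valid for arbitrary families.) So suppose we have embeddings $j_i:G_i\hookrightarrow G$ into a compact group $G$ with all $j_i\iota_i$ equal to a single embedding $H\hookrightarrow G$; identify $H$ and the $G_i$ with their images, so that $H$ is a common (not necessarily normal in $G$) closed subgroup and $G_i\cap G_j\supseteq H$. We must show the algebraic amalgamated product $\Gamma:=\coprod_{H,\cat{Gp}}G_i$ injects into its Bohr compactification, i.e.\ that \emph{every} element of $\Gamma$ is separated from the identity by some continuous morphism $\Gamma\to K$ to a compact group — equivalently, by some finite-dimensional unitary representation of $\Gamma$ that is continuous on each $G_i$.

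First I would reduce, using \Cref{le:famsubfam}\Cref{item:fam-subfam}, to the case of a \emph{finite} family $G_0,\dots,G_n$ (a nontrivial element of $\Gamma$ is a word involving only finitely many of the $G_i$, and it lies in the algebraic pushout of the corresponding finite subfamily; soundness of that subfamily then suffices because the map from a subfamily's algebraic pushout to $\Gamma$ is injective — this is the plain-group statement \cite[Theorem 11.67]{rot-gp}). The key structural input is normality: because each $H\trianglelefteq G_i$, the conjugation actions of the $G_i$ on $H$ assemble, and the group-theoretic coproduct $\coprod_{\cat{Gp}}G_i$ acts on $H$ by automorphisms; coherent embeddability (conjugation inside $G$) shows this action lands in the subgroup of $\Aut(H)$ generated by the images of the $G_i$, which is a subgroup of the \emph{compact} group $\mathrm{N}_G(\langle G_i\rangle)$-conjugations — more precisely, it is contained in the closure of that subgroup inside $\Aut(H)$ with the topology of uniform convergence, and that closure is compact because $G$ is. This is exactly condition (d) of the second displayed theorem, and it is the mechanism that prevents the blow-up seen in \Cref{ex:notemb,ex:heis}.

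With the action of $\Gamma$ on $H$ having compact closure $A\le\Aut(H)$, the plan is to build separating representations of $\Gamma$ by a ``Mackey/Clifford'' construction: given an irreducible unitary representation $\rho$ of $H$, its $A$-orbit is finite (compact and discrete in $\widehat{H}$), say of size $m$; form $H\rtimes A$, extend to representations $\widetilde\rho$ of finite-index subgroups and induce up, obtaining a finite-dimensional unitary representation of $H\rtimes A$ whose restriction to $H$ is a multiple of the $A$-orbit of $\rho$. Restricting the $A$-action back along $G_i\to A$ gives compatible finite-dimensional representations $\pi_i$ of the $G_i$ agreeing on $H$, hence (by the universal property of the algebraic amalgam) a finite-dimensional representation of $\Gamma$. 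Running over all $\rho\in\widehat H$ and supplementing with representations pulled back from the $G_i$ themselves (which separate points of each $G_i$), one separates all of $H$ and each $G_i$. The remaining work is to separate a \emph{general} reduced word $g_1g_2\cdots g_k\in\Gamma$ (with $g_\ell\in G_{i_\ell}\setminus H$, consecutive indices distinct): here I would exploit that the algebraic amalgam $\coprod_{H,\cat{Gp}}G_i$ already embeds into the amalgam of the compact groups $G_i$ inside $G$ — concretely into $\langle G_0,\dots,G_n\rangle\le G$ if that is closed; in general one passes to the abstract amalgam and invokes that plain-group amalgams have faithful finite-dimensional-over-a-field, hence unitarizable-after-compactification, representations separating any given element, but the clean route is: coherent embeddability gives a group homomorphism $\Gamma\to G$ (from the universal property applied to $j_i$) which is injective on each $G_i$, and one shows it is \emph{injective} using normality — a reduced word cannot collapse because in $G$ the subgroups $G_i$ only meet in $H$ (\Cref{cor:cgp-intsmall}), and normality of $H$ in each $G_i$ lets one push all the $H$-parts to one end, so a nontrivial reduced word maps to a nontrivial element of $G$ by the standard ping-pong/normal-form argument for amalgams. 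Since $G$ is compact, its continuous finite-dimensional representations separate points, so pulling back along $\Gamma\to G$ separates our word from $1$.

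The main obstacle I anticipate is the last point: verifying that the canonical map $\Gamma=\coprod_{H,\cat{Gp}}G_i\to G$ induced by coherent embeddability is \emph{injective}, not merely injective on each factor. For abstract amalgamated free products this is the normal-form theorem \cite[Theorem 11.67]{rot-gp}, but one must be slightly careful that the subgroup of $G$ generated by the images of the $G_i$ really is a quotient of the abstract amalgam in a way compatible with normal forms; the normality hypothesis $H\trianglelefteq G_i$ is what makes this safe, since it controls how the $H$-cosets interact across different factors and rules out the kind of extra relations that could otherwise appear. Once injectivity of $\Gamma\to G$ is in hand, compactness of $G$ finishes everything at once — both the ``separate points of $H$ and the $G_i$'' step and the ``separate reduced words'' step collapse into: $G$ is compact, hence residually finite-dimensional, hence $\Gamma$ embeds into its Bohr compactification.
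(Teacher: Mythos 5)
There is a genuine gap, and it sits at the load-bearing point of your argument: the claim that the canonical map $\Gamma=\coprod_{H,\cat{Gp}}G_i\to G$ induced by the coherent embedding is injective. This is false in essentially every nontrivial case, and the conditions you invoke do not come close to forcing it. That $G_i\cap G_j=H$ inside $G$ (\Cref{cor:cgp-intsmall}) is necessary for injectivity but nowhere near sufficient: already for $H=\{1\}$, $G_0=\langle(12)\rangle$ and $G_1=\langle(13)\rangle$ inside $G=S_3$ intersect trivially, yet the free product $\bZ/2 * \bZ/2$ is infinite and cannot inject into $S_3$ --- reduced words certainly collapse. More to the point for this theorem, $\Gamma$ is typically a huge discrete group (it contains free subgroups as soon as two of the $G_i$ are nontrivial modulo $H$), so it can never embed into a compact group $G$ via a map that is continuous on the $G_i$ with their compact topologies; if your injectivity claim held, the theorem would be trivial and would not need the Bohr compactification at all. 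Ping-pong is not available here (there is no action with the requisite dynamics), normality of $H$ in the $G_i$ does not ``control the extra relations,'' and the parenthetical fallback --- that abstract amalgams have separating finite-dimensional representations that become unitary after compactification --- is exactly the statement being proved, so it cannot be assumed. Your earlier Mackey/Clifford construction, if completed, only produces representations of the $G_i$ agreeing on $H$ (i.e.\ coherent embeddability, essentially the content of \Cref{th:whence}); it does not separate reduced words $g_1\cdots g_k$ with $g_\ell\notin H$, which is the actual difficulty.

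A secondary but real problem is the reduction to finite families. You cite \Cref{le:famsubfam} \Cref{item:fam-subfam}, which goes the wrong way (soundness of the family implies soundness of subfamilies). To pass from finite subfamilies to the whole family one needs \Cref{le:famsubfam} \Cref{item:subfam-fam}, whose splitting hypothesis is not automatic: a nontrivial element of $\coprod_{H,\cat{Gp}}G_f$ surviving in $\coprod_{H,\cat{CGp}}G_f$ may still die in $\coprod_{H,\cat{CGp}}G_i$, and \Cref{ex:heis} shows this is not a hypothetical worry. The paper's proof gets around both obstacles by first replacing all the $G_i$ by a single compact group $K$ normalizing $H$ (so that splittings of the sub-pushouts exist), reducing to the self-pushout $K\coprod_H K$, and then untwisting it as $K\coprod_H(H\times\bZ/2)$ so that the Khan--Morris argument plus Hulanicki's theorem separates the reduced words. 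Some substitute for that last analytic step is indispensable; your proposal does not contain one.
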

\begin{proof}
  The forward implication is the easy one (\Cref{le:snd2coh}), so we focus on the converse: for normal embeddings, coherent embeddability entails algebraic soundness. We will simplify the setup progressively.

  \begin{enumerate}[(I)]

  \item {\bf Reducing the problem to families of identical maps.} By hypothesis, there are \cat{CGp}-embeddings $j_i:G_i\to K$ agreeing on $H\le G_i$; we will identify $H$ with its image through the common map $j_i\iota_i$. Because every $j_i G_i\le K$ normalizes $H\le K$, we may as well replace $K$ with that normalizer, and assume throughout that $H$ is normal in $K$. Further, because the purely algebraic map
    \begin{equation*}
      \coprod_{H,\cat{Gp}}G_i\to \coprod_{H,I,\cat{Gp}}K
    \end{equation*}
    is an embedding (the codomain denoting the $I$-fold self-multi-pushout of $H\le K$), it will be enough to argue that the family consisting of $|I|$ copies of $H\le K$ is algebraically sound.

  \item {\bf Reduction to finite families of identical maps.} Note that for a family $(\iota_i)$ of identical maps $\iota_i=\iota:H\to K$, the splitting hypothesis of \Cref{le:famsubfam} \Cref{item:subfam-fam} is satisfied: given a subfamily indexed by $J\subseteq I$, we have a splitting of
    \begin{equation*}
      \coprod_{H,J,\cat{Gp}}K \to \coprod_{H,I,\cat{Gp}}K
    \end{equation*}
    defined as the identity on the $J$-indexed copies of $K$ and
    \begin{equation*}
      \text{$i$-indexed }K\xmapsto{\quad\id\quad}\text{some $j$-indexed }K\text{ for $i\not\in J$}. 
    \end{equation*}
    It follows from \Cref{le:famsubfam} \Cref{item:subfam-fam} that it suffices to prove the algebraic soundness of a finite family.

  \item{\bf Reduction to pairs of identical maps.} Per the preceding point, consider a normal embedding $\iota:H\trianglelefteq K$ and a positive integer $n$. By \Cref{le:famsubfam} \Cref{item:fam-subfam}, the family $(\iota)_{[n]}$ consisting of $n$ copies of $\iota$ will be algebraically sound provided a family consisting of {\it more} copies of $\iota$ is.

    We can thus replace $n$ with a power of 2 dominating it, and then leverage the result for just two maps by induction: the algebraic soundness of $(\iota)_{[2]}$ entails that of $(\iota)_{[2^2]}$, then $(\iota)_{[2^3]}$, etc.

  \item {\bf Reduction to a normal embedding and a central split one.} The claim has now boiled down to the injectivity of
    \begin{equation}\label{eq:khkh}
      K\coprod_{H,\cat{Gp}}K\to K\coprod_{H,\cat{CGp}}K
    \end{equation}
    for a normal embedding $H\trianglelefteq K$ in \cat{CGp}. There are compatible actions of $\bZ/2$ on both groups, interchanging the two copies of $K$ and acting trivially on $H$. In both cases we have isomorphisms
    \begin{equation*}
      (K\coprod_H K)\rtimes \bZ/2\cong K\coprod_H (H\times \bZ/2)
    \end{equation*}
    (regardless of category): identify the second copy of $K$ in the semidirect product on the left with the conjugate $\sigma K\sigma$ of $K$ on the right-hand side, where $1\ne \sigma\in \bZ/2$. Plainly, \Cref{eq:khkh} is injective if and only if
    \begin{equation*}
      \left(K\coprod_{H,\cat{Gp}}K\right)\rtimes \bZ/2\to \left(K\coprod_{H,\cat{CGp}}K\right) \rtimes \bZ/2
    \end{equation*}
    is, so the issue now is to prove the algebraic soundness of the pair
    \begin{equation}\label{eq:hkz2}
      H\trianglelefteq K\quad\text{and}\quad H\le H\times \bZ/2. 
    \end{equation}

  \item {\bf Conclusion.} To finish, we observe that the proof of \cite[Theorem 8]{km_central-2} applies to also deliver the algebraic soundness of the pair \Cref{eq:hkz2}:
    \begin{itemize}
    \item Elements of $K\coprod_H (H\times \bZ/2)$ belonging to the kernel of
      \begin{equation}\label{eq:2ktimesz2}
        K\coprod_H (H\times \bZ/2)\to K\times \bZ/2
      \end{equation}
      are mapped non-trivially into $(K/H)\coprod_{H,\cat{Gp}}(\bZ/2)$, which embeds into {\it its} Bohr compactification by \cite[Proposition 1]{hul_map}.

    \item On the other hand, elements {\it not} annihilated by \Cref{eq:2ktimesz2} are not an issue, since $K\times \bZ/2$ itself is compact. 
    \end{itemize}

  \end{enumerate}
  This concludes the proof. 
\end{proof}

\begin{remark}\label{re:locgl}
  The interesting implication `$\Leftarrow$' of \Cref{th:normasound} can be thought of as bootstrapping a(n at least partially) ``local'' embeddability condition (each $G_I$, individually, is embeddable into a common $G$ via embeddings agreeing on $H$) into a ``global'' one (the multi-pushout as a whole embeds into a compact group).

  For a similarly-flavored statement in a somewhat different context, consider \cite[Theorem 2]{li-shen_rfd}: if two unital $C^*$-algebras $A$ and $B$ admit embeddings into some product of matrix algebras which agree on a common finite-dimensional $D\le A,B$, then the entire pushout $A\coprod_D B$ in the category of unital $C^*$-algebras embeds into a product of matrix algebras.
\end{remark}

While \Cref{th:normasound} does provide some purchase, it is somewhat dissatisfying: one would expect, perhaps, a more explicit framing of {\it when} coherent embeddability obtains. \Cref{th:whence} addresses this, after some preliminaries.

To make sense of the subsequent material, we recall very briefly some of the basics of {\it Clifford theory} (originally developed in \cite{clif} for finite groups), as applicable to normal embeddings $H\trianglelefteq G$ of compact groups. There is also a review of the finite-group theory in \cite[\S 2]{cst}, and a summary of a quantum-group (i.e. Hopf-algebra) version, also covering plain compact groups, in \cite[Theorem 0.2]{chi_relcent}.

We denote by $\widehat{\bullet}$ the construction attaching to a compact group its set of (isomorphism classes of) irreducible unitary representations. The normal $\cat{CGp}$-embedding $H\trianglelefteq G$ induces an equivalence relation `$\sim$' on $\widehat{H}$ via
\begin{equation*}
  \rho\sim\rho'\Longleftrightarrow \exists \pi\in \widehat{G},\quad \rho,\rho'\le \pi|_H
\end{equation*}
(`$\le$' meaning `is a subrepresentation (hence a summand) of'). It turns out, moreover, that for every irreducible $\pi\in \widehat{G}$, the restriction $\pi|_H$ decomposes as a sum of copies of $\rho$ precisely ranging over an equivalence class (and specifically, a conjugacy class under the action of $G$ on $H$), each appearing with the same multiplicity depending on $\pi$:
\begin{equation}\label{eq:pidec}
  \forall \pi\in \widehat{G},\ \pi|_H\cong \bigoplus_{\rho\in C_{\pi}}\rho^{\oplus m_{\pi}}\text{ for a class }C_{\pi}\text{ of `$\sim$' and some }m_{\pi}\in \bZ_{>0}.
\end{equation}

Coherent embeddability is a kind of boundedness assumption on all of this representation-theoretic data, ensuring that the two qualitative phenomena exhibited (respectively) in \Cref{ex:notemb,ex:heis} do not occur:
\begin{itemize}
\item the sizes of the equivalence classes $C_{\pi}$
\item and the multiplicities $m_{\pi}$
\end{itemize}
all stay bounded as $\pi$ range over representations of the various $G_i$ dominating a given element of $\widehat{H}$. Some terminology will help formalize this intuition.

\begin{definition}\label{def:classmult}
  \begin{enumerate}[(1)]
  \item Let $\iota: H\trianglelefteq G$ be a normal embedding of compact groups and $\rho\in\widehat{H}$.
    \begin{itemize}
    \item The {\it Clifford relation} $\sim_{\iota}$ on $\widehat{H}$ is the equivalence relation `$\sim$' of the discussion above.
    \item The {\it Clifford class} $C_{\rho}=C_{\rho|\iota}$ of $\rho$ (relative to the embedding $\iota$) consists of all of its $G$-conjugates. Equivalently, it is the class of $\rho$ under the Clifford relation $\sim_{\iota}$.
    \item The {\it Clifford $\pi$-multiplicity} $m_{\rho|\pi} = m_{\rho|\pi,\iota}$ of $\rho\in\widehat{H}$ with respect to $\pi\in\widehat{G}$ is the $m_{\pi}$ of \Cref{eq:pidec}, understood to be 0 if $\pi|_H$ does not contain $\rho$.
    \item The {\it Clifford multiplicity} $m_{\rho}=m_{\rho|\iota}$ is the smallest of all (strictly) positive $m_{\rho|\pi}$, for $\pi\in\widehat{G}$.
    \end{itemize}

  \item Now consider a family $\iota_i:H\trianglelefteq G_i$, $i\in I$ of normal embeddings and $\rho\in\widehat{H}$.
    \begin{itemize}
    \item The {\it Clifford relation} $\sim_{(\iota_i)}$ is the finest equivalence relation on $\widehat{H}$ coarser than all $\sim_{\iota_i}$.

      Because each individual relation $\sim_{\iota_i}$ is nothing but the orbit relation for the conjugacy action of $G_{\iota_i}$ on $\widehat{H}$, the coarser $\sim_{(\iota_i)}$ is similarly the orbit relation of the conjugacy action of $\coprod_{\cat{Gp}}G_i$ on $\widehat{G}$. This makes sense, regardless of any topological issues: that algebraic coproduct certainly acts on $H$ by automorphisms in the category $\cat{CGp}$, and hence acts on $\widehat{H}$.
    \item The {\it Clifford class} $C_{\rho}=C_{\rho|(\iota_i)}$ is the class of $\rho$ with respect to $\sim_{(\iota_i)}$.

      Or: the conjugacy class of $\rho$ under the action of $\coprod_{\cat{Gp}}G_i$,
    \item The {\it Clifford multiplicity} $m_{\rho}=m_{\rho|(\iota_i)}$ (positive integer or $\infty$) is the supremum of all $m_{\rho|\iota_i}$ for varying $i\in I$.
    \end{itemize}
  \end{enumerate}
\end{definition}

\begin{remark}
  As alluded to above, \Cref{ex:notemb} is built so that the Clifford classes of the pair of embeddings are infinite (or rather, most of them are). On the other hand, in \Cref{ex:heis} the Clifford classes are singletons because $H$ is central, but (some of) the Clifford multiplicities are infinite.
\end{remark}

\begin{theorem}\label{th:whence}
  For a family $\iota_i:H\trianglelefteq G_i$ of normal compact-group embeddings, the two conditions of \Cref{th:normasound} are also equivalent to the following:
  \begin{enumerate}[(a)]
  \item\label{item:wce} There are \cat{CGp}-morphisms $f_i:G_i\to G$ with all $f_i\iota_i$ equal and injective.
  \item\label{item:fin} The requirement $\cat{FIN}_{(\iota_i)}$:
    \begin{equation}\label{eq:fincond}
      \text{
        For every $\rho\in \widehat{H}$ the Clifford class $C_{\rho|(\iota_i)}$ and Clifford multiplicity $m_{\rho|(\iota_i)}$ are both finite.
      }
    \end{equation}
  \end{enumerate}
\end{theorem}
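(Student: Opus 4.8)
The plan is to enlarge the equivalence of \Cref{th:normasound} by proving the cycle: the (equivalent) conditions of \Cref{th:normasound} imply \Cref{item:wce}; \Cref{item:wce} implies \Cref{item:fin}; and \Cref{item:fin} implies coherent embeddability in the sense of \Cref{def:cohemb}. The first implication is free --- a coherent-embeddability datum $j_i:G_i\hookrightarrow G$ is in particular a family of \cat{CGp}-morphisms with common injective restriction $j_i\iota_i$ --- so the work is in the other two arrows.

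For \Cref{item:wce}$\Rightarrow$\Cref{item:fin}, start from morphisms $f_i:G_i\to G$ with all $f_i\iota_i$ equal and injective, identify $H$ with this common image, and replace $G$ by the closed (hence compact) subgroup $G'$ topologically generated by $\bigcup_i f_i(G_i)$. Every $f_i(G_i)$ normalizes $H$, so $G'$ does too, and conjugation defines a continuous homomorphism $c:G'\to\Aut(H)$ with compact image $Q$: continuity for the uniform topology on $\Aut(H)$ follows from joint continuity of $G'\times H\to H$ together with compactness of $H$, and then $Q=c(G')$ is a continuous image of a compact group. The conjugation action of $\Gamma=\coprod_{\cat{Gp}}G_i$ on $\widehat H$ factors through $Q$, so each Clifford class $C_{\rho|(\iota_i)}$ sits inside a single $Q$-orbit of $\widehat H$; and $Q$-orbits in the discrete set $\widehat H$ are finite, since the $\Aut(H)$-stabilizer of any $\rho$ is open (two unitary representations at sufficiently small uniform distance have equal characters, hence are isomorphic). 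For the multiplicities, fix $\rho\in\widehat H$ and pick $\pi\in\widehat{G'}$ with $\rho\le\pi|_H$; by \cref{eq:pidec} the multiplicity $m$ of $\rho$ in $\pi|_H$ is finite, and for each $i$ the representation $\pi\circ f_i$ of $G_i$ restricts to $\pi|_H$ on $H$, so one of its irreducible constituents contains $\rho$ on restriction with multiplicity $\le m$; hence $m_{\rho|\iota_i}\le m$ for all $i$ and $m_{\rho|(\iota_i)}\le m<\infty$. Thus $\cat{FIN}_{(\iota_i)}$ holds.

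The substantial step is \Cref{item:fin}$\Rightarrow$ coherent embeddability. Let $P:=\coprod_{H,\cat{Gp}}G_i$ be the group-theoretic multi-pushout, with Bohr compactification $bP$; the structure maps $G_i\to P$ are injective (\Cref{res:gpnotemb} \Cref{item:5}) and agree on $H$, so it suffices to show each composite $G_i\to P\to bP$ is injective, the resulting \cat{CGp}-embeddings into $bP$ then providing the coherent-embeddability datum. Fix $g\in G_{i_0}\setminus\{1\}$, choose $\sigma\in\widehat{G_{i_0}}$ with $\sigma(g)\ne 1$ and $\rho\in\widehat H$ with $\rho\le\sigma|_H$, and put $C:=C_{\rho|(\iota_i)}=\{\rho_1,\dots,\rho_c\}$, which is finite by $\cat{FIN}_{(\iota_i)}$. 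Let $M:=\max_l m_{\rho_l|(\iota_i)}$ (finite, again by $\cat{FIN}_{(\iota_i)}$) and let $N$ be any common multiple of $1,\dots,M$ and of the multiplicity of $\rho$ in $\sigma|_H$. The heart of the argument is to produce, for every $i\in I$ simultaneously, a finite-dimensional unitary representation $\pi_i$ of $G_i$ with $\pi_i|_H\cong\Theta:=\bigoplus_{l=1}^c\rho_l^{\oplus N}$, and with $\sigma$ a summand of $\pi_{i_0}$: decompose $C$ into $G_i$-conjugacy orbits, choose for each orbit --- via \cref{eq:pidec} --- an irreducible $G_i$-representation whose restriction to $H$ is that orbit with its minimal multiplicity (which is $\le M$, hence divides $N$), take enough copies so that the orbit occurs with multiplicity exactly $N$, and for $i=i_0$ use $\sigma$ itself on the $G_{i_0}$-orbit of $\rho$; summing over the orbits gives $\pi_i$. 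Realizing all the $\pi_i$ on a single Hilbert space carrying $\Theta$ so that their restrictions to $H$ are literally equal to $\Theta$ turns $(\pi_i)_{i\in I}$ into a compatible family, i.e. a finite-dimensional unitary representation $\Pi$ of $P$ with $\Pi(g)\ne1$; hence $g$ survives in $bP$.

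The main obstacle is precisely this last construction: one must arrange the restrictions $\pi_i|_H$ to be pairwise isomorphic over all of $I$, and this is exactly where both halves of $\cat{FIN}_{(\iota_i)}$ are essential --- finiteness of the Clifford class $C$ keeps $\Theta$ finite-dimensional, while the uniform finiteness of Clifford multiplicities (over $C$ and over $i$) is what lets $N$ be a single finite integer. The remaining ingredients are routine: injectivity of the structure maps $G_i\to P$ for an amalgamated free product, the Clifford-theoretic shape \cref{eq:pidec} of restrictions of irreducibles, continuity of the conjugation map into $\Aut(H)$, and the fact that uniformly close unitary representations are isomorphic.
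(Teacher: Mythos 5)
Your proposal is correct and follows essentially the same route as the paper: the easy cycle through \Cref{item:wce}, the finiteness of Clifford classes and multiplicities via a compact group normalizing $H$, and then the key construction of representations $\pi_i$ of all $G_i$ with mutually isomorphic restrictions $\bigoplus_{\rho'\in C}\rho'^{\oplus N}$ to $H$ (with $N$ a common multiple of the finitely many Clifford multiplicities) that can be realized on one space and glued into a representation of the algebraic pushout detecting a given $g$. The only differences are cosmetic: the paper states the gluing claim per irreducible $\pi\in\widehat{G_{i_0}}$ and assembles products afterwards, and in \Cref{le:ce2fin} passes to the normalizer of $H$ rather than the subgroup generated by the images.
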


Note that \Cref{item:wce} is a formal strengthening of the local criterion of coherent embeddability: the $f_i$ themselves are no longer required to be injective. We refer to this condition as {\it weak} coherent embeddability, and first isolate the more straightforward implication.

\begin{lemma}\label{le:ce2fin}
  If a family $\iota_i:H\trianglelefteq G_i$ of normal compact-group embeddings is weakly coherently embeddable, then condition $\cat{FIN}_{(\iota_i)}$ of \Cref{th:whence} holds. 
\end{lemma}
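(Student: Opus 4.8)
The plan is to reduce the ``local'' embedding data to a single normal embedding of compact groups and then read off the finiteness statements from ordinary Clifford theory. First I would fix witnesses $f_i:G_i\to G$ of weak coherent embeddability: $G$ a compact group, all $f:=f_i\iota_i:H\to G$ equal and injective. Identifying $H$ with its image $f(H)\le G$, note that since $\iota_i(H)\trianglelefteq G_i$ and $f_i$ surjects onto $f_i(G_i)$, each $f_i(G_i)$ normalizes $H$ inside $G$. Replacing $G$ by the closed subgroup $\overline{\langle\,\bigcup_i f_i(G_i)\,\rangle}$ — which is still compact, still receives all the $f_i$, and still contains the embedded copy of $H$ — I may therefore assume $H\trianglelefteq G$, so that the Clifford-theoretic machinery recalled around \eqref{eq:pidec} applies to the single normal embedding $H\trianglelefteq G$ (and, in particular, every $\rho\in\widehat H$ is dominated by some $\pi\in\widehat G$, by Frobenius reciprocity applied to $\mathrm{Ind}_H^G\rho$).

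The second step is the elementary observation that for $g\in G_i$ and $h\in H$ the conjugation action of $g$ on $H$ through $\iota_i$ corresponds, under the identification $H\le G$, to conjugation by $f_i(g)\in G$ (a one-line computation using $f=f_i\iota_i$ and normality). Hence the conjugation action of $\Gamma:=\coprod_{\cat{Gp}}G_i$ on $H$, and so on $\widehat H$, factors through the morphism $\Gamma\to G$ induced by the $f_i$ via the universal property of the algebraic coproduct, followed by the conjugation map $G\to\Aut(H)$ (now legitimate because $H\trianglelefteq G$). Consequently every $\Gamma$-orbit on $\widehat H$ is contained in a $G$-conjugacy orbit; and for $H\trianglelefteq G$ with $G$ compact each such orbit is finite, since for any $\pi\in\widehat G$ dominating a given $\rho$ the restriction $\pi|_H$ is a sum of copies of the members of that orbit, so the orbit has cardinality at most $\dim\pi$. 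This gives finiteness of the Clifford classes $C_{\rho|(\iota_i)}$.

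For the multiplicities, fix $\rho\in\widehat H$ and choose $\pi\in\widehat G$ realizing the single-embedding Clifford multiplicity $m:=m_{\rho|(H\trianglelefteq G)}<\infty$, so that $\pi|_H$ contains $\rho$ with multiplicity exactly $m$. For each $i$, the continuous finite-dimensional $G_i$-representation $\pi\circ f_i$ decomposes into finitely many irreducibles $\tau_k\in\widehat{G_i}$, and $(\pi\circ f_i)\circ\iota_i=\pi\circ f=\pi|_H$; hence $\rho$ occurs in $\bigoplus_k\tau_k|_H$ with total multiplicity $m$, so some $\tau_k$ contains $\rho$ with a positive multiplicity $\le m$, whence $m_{\rho|\iota_i}\le m$. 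As the bound $m$ is independent of $i$, we get $m_{\rho|(\iota_i)}=\sup_i m_{\rho|\iota_i}\le m<\infty$, and $\cat{FIN}_{(\iota_i)}$ follows.

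This being the ``more straightforward implication'' of \Cref{th:whence}, I do not expect a serious obstacle; the only points requiring a little care are the bookkeeping in the first step (checking that after shrinking $G$ to the closed subgroup generated by the $f_i(G_i)$ one really does have $H$ normal while keeping all the $f_i$) and the standard — but worth stating explicitly — fact that a normal embedding of compact groups has all Clifford classes and all Clifford multiplicities finite, which is what makes both halves of the argument run.
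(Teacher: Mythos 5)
Your proposal is correct and follows essentially the same route as the paper's proof: reduce to a single normal embedding $H\trianglelefteq G$ (the paper passes to the normalizer of $H$ in $G$, you to the closed subgroup generated by the $f_i(G_i)$ — the same reduction in effect), then observe that the conjugation action of $\coprod_{\cat{Gp}}G_i$ on $H$ factors through $G$, so the family's Clifford classes and multiplicities are dominated by those of the single embedding, which are finite. Your third paragraph merely spells out the multiplicity bound $m_{\rho|\iota_i}\le m_{\rho|f}$ that the paper states without detail; no gap.
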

\begin{proof}
  Fix maps $f_i:G_i\le G$ as in \Cref{th:whence} \Cref{item:wce}, assuming furthermore that the resulting embedding $f_i\iota_i:H\le G$ is normal (as in the proof of \Cref{th:normasound}: substitute the normalizer of $H$ in $G$ for the latter).

  
  To conclude, simply note that the conjugacy action of $\coprod_{\cat{Gp}}G_i$ on $H$ factors through that of $G$, so that 
  \begin{equation*}
    C_{\rho|(\iota_i)}\subseteq C_{\rho|j_i\iota_i} = \text{finite set of $G$-conjugates of $\rho$},
  \end{equation*}
  and
  \begin{equation*}
    m_{\rho|(\iota_i)}\le m_{\rho|(j_i\iota_i)}<\infty.
  \end{equation*}
  The finiteness of the values on the right, then, entails that of the left-hand items.
\end{proof}

\pf{th:whence}
\begin{th:whence}
  \Cref{le:ce2fin} allows us to focus on one implication: $\cat{FIN}_{(\iota_i)}$ implies coherent embeddability (or, equivalently by \Cref{th:normasound}, algebraic soundness). It will be enough, moreover, to prove a statement formally weaker than coherent embeddability:

  {\bf Claim:} For a fixed $i_0\in I$ and an irreducible unitary representation $\pi\in\widehat{G_{i_0}}$, there are unitary representations $\varphi_{i,\pi}:G_i\to U(V_{\pi})$ on the same finite-dimensional Hilbert space, agreeing on $H$, and such that $\pi\le \pi_{i_0}$.

  Assuming the claim for the moment, we can conclude as follows:
  \begin{itemize}
  \item First consider, for each $i$, the map
    \begin{equation}\label{eq:prodvarphi}
      G_i\xrightarrow{\quad\prod_{\pi}\varphi_{i,\pi}\quad} \prod_{\pi}U(V_{\pi});
    \end{equation}
    these (for differing $i$) agree on $H$ by construction, and at $i_0$ the map in question is an embedding (because the direct sum of the irreducible representations of $G_{i_0}$ is faithful; this follows, for instance, from \cite[Corollary 2.36]{hm4}).
  \item Then, repeat the procedure for each $i$, obtaining a product of maps of the form \Cref{eq:prodvarphi} over $I$. {\it Those} maps will give the desired coherent embedding.
  \end{itemize}
  
  It thus remains to prove the claim, writing `0' for `$i_0$' to avoid overburdening the notation (so $G_{i_0}$ becomes $G_0$, etc.). As in \Cref{eq:pidec}, we have
  \begin{equation*}
    \pi|_H\cong \bigoplus_{\rho\in C_{\pi}}\rho^{\oplus m}\text{ for a Clifford class }C_{\pi}\text{ of $\iota_0$ and some }m\in \bZ_{>0}.
  \end{equation*}  
  Now consider the possibly-larger but still finite (by assumption) Clifford class
  \begin{equation*}
    \widetilde{C}\supseteq C_{\pi}\text{ associated to the entire family }(\iota_i). 
  \end{equation*}
  By the multiplicity-finiteness half of $\cat{FIN}_{(\iota_i)}$, for every $\rho\in\widetilde{C}$ the Clifford multiplicities $m_{\rho|\iota_i}$ range over some finite set of positive integers. We can thus construct a finite-dimensional $H$-representation
  \begin{equation*}
    \bigoplus_{\rho\in \widetilde{C}}\rho^{\oplus M},\quad
    M:=m\cdot\mathrm{lcm}(m_{\rho|\iota_i},\ i\in I). 
  \end{equation*}
  It is
  \begin{itemize}
  \item obtainable as a restriction of a $G_i$-representation for every $i$;
  \item and furthermore, for $G_0$ that representation can be chosen so as to contain the original $\pi\in \widehat{G_0}$.
  \end{itemize}
  Realizing said $G_i$-representations on a common space, and adjusting them by unitary conjugation so they agree on $H$ (since their restrictions to it are by construction isomorphic), we have the sought-after claim.
\end{th:whence}

A number of conclusions follow from \Cref{th:whence}. As \Cref{ex:heis} (and its contrast to finite families of central embeddings: \Cref{re:km}), positive results will generally be stronger for {\it finite} families of normal embeddings.

Throughout, the automorphism group $\Aut(G)$ of $G\in\cat{CGp}$ is topologized in the one sensible fashion (e.g. \cite[paragraph following Lemma 6.62]{hm4}): with the uniform topology on
\begin{equation*}
  \Aut(G)\subset \left(\text{continuous maps }G\to G\right). 
\end{equation*}
Because the domain $G$ of the maps is itself compact, This is also the {\it compact-open} topology of \cite[Definition 43.1]{wil-top} or \cite[Definition preceding Theorem 46.8]{mun}. 

\begin{corollary}\label{cor:fincpctorb}
  A finite family of normal embeddings $\iota_i:H\trianglelefteq G_i$, $i\in I$ in \cat{CGp} is algebraically sound if and only if the images of the morphisms
  \begin{equation}\label{eq:adgi}
    Ad_i:G_i\xrightarrow[]{\quad\text{conjugation action on }H\quad} \Aut(H),\ i\in I
  \end{equation}
  are all contained in a compact subgroup of $\Aut(H)$.  
\end{corollary}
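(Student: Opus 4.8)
The plan is to read off the corollary from \Cref{th:normasound,th:whence}: for a family of normal embeddings, algebraic soundness is equivalent to the representation-theoretic condition $\cat{FIN}_{(\iota_i)}$, so it suffices to prove that $\cat{FIN}_{(\iota_i)}$ holds if and only if the subgroup $\Delta\le\Aut(H)$ generated by the images $Ad_i(G_i)$ is contained in a compact subgroup of $\Aut(H)$. (That last reformulation is harmless: a compact subgroup containing all the $Ad_i(G_i)$ contains $\overline\Delta$, and conversely $\overline\Delta$ will be the compact subgroup we build.) One standing fact used throughout is that the action of $\Aut(H)$ on $\widehat H$ has open point stabilizers: a small uniform perturbation of an automorphism perturbs a fixed finite-dimensional unitary representation by a small amount, and two finite-dimensional unitary representations that are close in the uniform metric are isomorphic (compare characters), so $\{\alpha:\rho\circ\alpha\cong\rho\}$ is a subgroup of $\Aut(H)$ containing a neighbourhood of the identity, hence open.

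For the ``if'' direction, suppose every $Ad_i(G_i)$ lies in a compact subgroup $L\le\Aut(H)$. The Clifford class $C_{\rho\mid(\iota_i)}$ is the $\Delta$-orbit of $\rho$ on $\widehat H$, hence is contained in the $L$-orbit of $\rho$; since $\mathrm{Stab}_L(\rho)$ is open in the compact group $L$ it has finite index, so the $L$-orbit, and a fortiori $C_{\rho\mid(\iota_i)}$, is finite. The multiplicity half of $\cat{FIN}_{(\iota_i)}$ is automatic: each $m_{\rho\mid\iota_i}$ is a (finite) positive integer, and $I$ being finite, $m_{\rho\mid(\iota_i)}=\max_{i\in I}m_{\rho\mid\iota_i}<\infty$. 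Thus $\cat{FIN}_{(\iota_i)}$ holds and \Cref{th:whence,th:normasound} deliver algebraic soundness. It is exactly this last step that exploits $|I|<\infty$: for infinite families the orbit condition can hold while the multiplicities blow up, as in \Cref{ex:heis}.

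For the ``only if'' direction, assume algebraic soundness; by \Cref{th:whence} every Clifford class $C_{\rho\mid(\iota_i)}$ — i.e.\ every $\Delta$-orbit on $\widehat H$ — is finite. I will show $\Delta$, viewed as a family of self-maps of $H$, is equicontinuous, and then invoke Ascoli--Arzel\`a. By Peter--Weyl a basic neighbourhood of $1_H$ has the form $\{h:\|\rho_l(h)-I\|<\varepsilon,\ l=1,\dots,k\}$ for finitely many $\rho_l\in\widehat H$. For $\alpha\in\Delta$ we have $\rho_l\circ\alpha\cong\sigma$ for some $\sigma$ in the finite orbit $C_{\rho_l}$ of $\rho_l$, and conjugation by a unitary is norm-preserving, so $\|\rho_l(\alpha(h))-I\|=\|\sigma(h)-I\|$ for that $\sigma$. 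Hence controlling $\|\tau(h)-I\|$ for all $\tau$ in the finite set $\bigcup_l C_{\rho_l}$ controls $\|\rho_l(\alpha(h))-I\|$ for every $\alpha\in\Delta$ simultaneously, which is equicontinuity of $\Delta$ at $1_H$; by translation (the elements of $\Delta$ being group homomorphisms) this propagates to equicontinuity on all of $H$. Ascoli--Arzel\`a then makes the closure of $\Delta$ inside $C(H,H)$ compact, and a routine check — uniform limits of continuous homomorphisms are continuous homomorphisms, and joint continuity of composition along equicontinuous nets together with $\Delta^{-1}=\Delta$ shows each such limit is invertible with inverse again a limit — identifies this closure with a compact subgroup of $\Aut(H)$ containing every $Ad_i(G_i)$.

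The main obstacle is this last direction's passage from ``finite orbits on $\widehat H$'' to ``relatively compact in $\Aut(H)$''. The crux is the equicontinuity estimate, which rests on two points that should be isolated carefully: the rigidity of a finite-dimensional unitary representation under small uniform perturbations of the acting automorphism (this is what makes $\Aut(H)$ act on $\widehat H$ with open stabilizers, and is used in both directions), and the norm-preservation of unitary conjugation, which is what converts the soft statement ``$\rho_l\circ\alpha$ stays within a fixed finite set of isomorphism classes'' into an honest uniform bound. Everything else is bookkeeping with \Cref{th:normasound,th:whence} and standard compact-group and function-space generalities.
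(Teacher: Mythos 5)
Your proof is correct, and the ``if'' half coincides with the paper's: reduce to the condition $\cat{FIN}_{(\iota_i)}$ via \Cref{th:normasound,th:whence}, note that the multiplicity half is automatic for finite $I$, and get finiteness of Clifford classes from the open-stabilizer/finite-orbit property of a compact subgroup of $\Aut(H)$ acting on $\widehat H$. Where you genuinely diverge is the ``only if'' direction. The paper disposes of it in one line: algebraic soundness makes all the conjugation actions $Ad_i$ restrictions of the single conjugation action of the compact multi-pushout $\coprod_{H,\cat{CGp}}G_i$ on its normal subgroup $H$, so the required compact subgroup of $\Aut(H)$ is simply the (continuous, hence compact) image of that pushout. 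You instead use only the weaker consequence that all $\Delta$-orbits on $\widehat H$ are finite and reconstruct relative compactness of $\Delta$ by hand: the unitary-conjugation-invariance of the norm turns finiteness of the orbits $C_{\rho_l}$ into a uniform equicontinuity estimate at $1_H$, Ascoli--Arzel\`a gives compact closure in $C(H,H)$, and the equicontinuous-net argument shows the limits are genuine automorphisms. This is more work, but it buys something the paper's argument does not state: the intrinsic equivalence, for a subgroup of $\Aut(H)$, between having finite orbits on $\widehat H$ and being relatively compact --- so $\cat{FIN}$ and the corollary's condition are matched directly, without routing through the pushout. You are also right to treat the invertibility of the limits with care, since $\Aut(H)$ is not closed in $\End(H)$ (\Cref{ex:conotclosed}); your use of $\Delta^{-1}=\Delta$ together with equicontinuity of both $\Delta$ and $\Delta^{-1}$ correctly closes that gap.
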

\begin{proof}
  One implication is clear: algebraic soundness implies means that all conjugation actions by the $G_i$ are pulled back from a single action on $H$ by $\coprod_{H,\cat{CGp}}G_i$.

  Conversely, by \Cref{th:normasound,th:whence} algebraic soundness is equivalent to $\cat{FIN}_{(\iota_i)}$, and for finite families the multiplicity-finiteness half of $\cat{FIN}$ is satisfied automatically. Algebraic soundness thus boils down to the Clifford-class-finiteness condition in \Cref{eq:fincond}. Because a compact group acting on $H$ will have finite orbits on $\widehat{H}$, so that finiteness condition follows from the compactness of the closed subgroup of $\Aut(H)$ generated by all $Ad_i(G_i)$.
\end{proof}

A variant, following immediately from \Cref{cor:fincpctorb}:

\begin{corollary}\label{cor:cpctautgp}
  A finite family of $\cat{CGp}$-embeddings $H\trianglelefteq G_i$ is algebraically sound if $\Aut(H)$ is compact.  \qedhere
\end{corollary}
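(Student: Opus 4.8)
The plan is to read this off \Cref{cor:fincpctorb} with essentially no extra work. That corollary asserts that a finite family of normal \cat{CGp}-embeddings $\iota_i:H\trianglelefteq G_i$, $i\in I$, is algebraically sound precisely when the images $Ad_i(G_i)$ of the conjugation morphisms \Cref{eq:adgi} are all contained in a single compact subgroup of $\Aut(H)$. When $\Aut(H)$ happens to be compact this condition is automatic: one takes the common compact subgroup to be $\Aut(H)$ itself. So the hypothesis of \Cref{cor:fincpctorb} is met and algebraic soundness follows. The finiteness of the family is simply inherited from the hypothesis of \Cref{cor:fincpctorb} (where, recall, it is the multiplicity-finiteness half of $\cat{FIN}_{(\iota_i)}$ that genuinely uses it).

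There is one small point worth recording, although it is not even logically necessary for the argument, since the relevant hypothesis in \Cref{cor:fincpctorb} asks for \emph{containment} in a compact subgroup rather than that the $Ad_i(G_i)$ themselves be compact: each $Ad_i$ really is a \cat{CGp}-morphism $G_i\to\Aut(H)$, i.e. continuous for the uniform topology (equivalently, since $H$ is compact, the compact-open topology) fixed throughout the discussion. This holds because $H$ is a compact normal subgroup of the compact group $G_i$, so conjugation $G_i\times H\to H$ is jointly continuous; and joint continuity of an action of a group on a locally compact Hausdorff space forces continuity of the induced map $G_i\to C(H,H)$, whose image then lands in $\Aut(H)$.

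I anticipate no obstacle whatsoever: the entire content of the statement already resides in \Cref{cor:fincpctorb}, and through it in \Cref{th:whence,th:normasound}. The proof in the manuscript can accordingly be a single sentence invoking \Cref{cor:fincpctorb}.
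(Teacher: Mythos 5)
Your proposal is correct and matches the paper exactly: the paper likewise derives \Cref{cor:cpctautgp} as an immediate consequence of \Cref{cor:fincpctorb}, the point being that when $\Aut(H)$ is compact the required common compact subgroup containing all the images $Ad_i(G_i)$ can be taken to be $\Aut(H)$ itself. Your aside about the continuity of the maps \Cref{eq:adgi} is a reasonable sanity check but, as you note, not needed for the argument.
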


This applies, in particular, to finite $H$. It also shows that in \Cref{ex:notemb} the smallest torus $H$ that could have been used was $\bT^2$: the 1-torus $\bT^1\cong \bS^1$ has finite automorphism group ($\Aut(\bS^1)\cong \bZ/2$).

The discussion also offers some motivation for the following dichotomous classification of compact groups: some always generate algebraically sound finite families of normal embeddings, and some do not.

\begin{corollary}\label{cor:lgcpct}
  For a compact group $H$, the following conditions are equivalent:
  \begin{enumerate}[(a)]
  \item\label{item:14} Every finite family of normal \cat{CGp}-embeddings $\iota_i:H\trianglelefteq G_i$ is algebraically sound.
  \item\label{item:15} The poset of compact subgroups of $\Aut(H)$ is filtered.
  \end{enumerate}
\end{corollary}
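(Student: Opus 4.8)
The plan is to read everything off \Cref{cor:fincpctorb}, which already reduces algebraic soundness of a finite family of normal embeddings $\iota_i\colon H\trianglelefteq G_i$, $i\in I$, to a single condition inside $\Aut(H)$: the images $Ad_i(G_i)$ all lie in one compact subgroup. Through this lens, \Cref{item:14} says exactly that any finite family of compact subgroups of $\Aut(H)$ of the special form $\{Ad_i(G_i)\}$ is dominated by a compact subgroup, while \Cref{item:15} is the same assertion without the restriction on the shape of the subgroups. So the proof consists of two observations: that the images $Ad_i(G_i)$ are themselves compact subgroups, and that \emph{every} compact subgroup of $\Aut(H)$ arises as such an image, up to enlargement.

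For \Cref{item:15}$\Rightarrow$\Cref{item:14}, start from a finite family $\iota_i\colon H\trianglelefteq G_i$. Each conjugation morphism $Ad_i\colon G_i\to\Aut(H)$ is a continuous group homomorphism — continuity being the standard fact (recalled just before \Cref{cor:fincpctorb}) that a continuous action on the compact group $H$ induces a continuous map into $\Aut(H)$ for the uniform topology — so each image $Ad_i(G_i)$ is a compact subgroup of $\Aut(H)$. Filteredness of the poset of compact subgroups, applied to this finite collection, yields a compact $C\le\Aut(H)$ containing all $Ad_i(G_i)$, and \Cref{cor:fincpctorb} then gives algebraic soundness.

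For \Cref{item:14}$\Rightarrow$\Cref{item:15}, let $C_1,\dots,C_n$ be finitely many compact subgroups of $\Aut(H)$ (the poset is nonempty, containing the trivial subgroup, so this is all that needs dominating). To each $C_k$ attach the normal embedding $H\trianglelefteq H\rtimes C_k$ formed with the tautological action of $C_k\le\Aut(H)$ on $H$; this action is continuous by continuity of evaluation on the compact $H$, and $H\rtimes C_k$ is compact as a semidirect product of compact groups, so $(\iota_k)_{k=1}^n$ is a genuine finite family of normal \cat{CGp}-embeddings. By \Cref{item:14} it is algebraically sound, whence by \Cref{cor:fincpctorb} there is a compact subgroup $C\le\Aut(H)$ with $Ad_k(H\rtimes C_k)\subseteq C$ for every $k$. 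A direct computation shows that $(e,c)\in H\rtimes C_k$ acts on $H$ precisely as $c$, so $C_k\subseteq Ad_k(H\rtimes C_k)\subseteq C$; hence $C$ dominates $C_1,\dots,C_n$ and the poset is filtered.

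I do not expect a genuine obstacle beyond \Cref{cor:fincpctorb} itself: the one non-formal ingredient is the realization $H\trianglelefteq H\rtimes C$ with $Ad(H\rtimes C)=\mathrm{Inn}(H)\cdot C$ (in particular compact, as \Cref{cor:fincpctorb} would in any case force), and everything else is the routine verification that the actions in play are continuous and the groups in play compact.
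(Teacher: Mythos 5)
Your proposal is correct and follows essentially the same route as the paper: both directions are read off \Cref{cor:fincpctorb}, with \Cref{item:15}$\Rightarrow$\Cref{item:14} obtained by dominating the compact images $Ad_i(G_i)$ and the converse obtained from the semidirect products $H\rtimes C_k$ built from the tautological actions of compact subgroups of $\Aut(H)$. The only (immaterial) difference is that you argue the second implication directly for $n$ subgroups where the paper argues the contrapositive for a pair.
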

\begin{proof}
  This is a fairly straightforward application of \Cref{cor:fincpctorb}: on the one hand, \Cref{item:15} implies that the images of the (finitely many) maps \Cref{eq:adgi} are all contained in a single compact subgroup of $\Aut(H)$. On the other, if $H_i\le \Aut(H)$, $i=0,1$ are two compact subgroups of $\Aut(H)$ not contained in any common compact subgroup, then the pair
  \begin{equation*}
    H\trianglelefteq G_i:=H\rtimes H_i,\ i=0,1
  \end{equation*}
  will be algebraically unsound by \Cref{cor:fincpctorb}. 
\end{proof}

\begin{remarks}\label{res:coenough}
  \begin{enumerate}[(1)]
  \item A small subtlety is involved in topologizing $\Aut(G)$. The discussion following \cite[Lemma 6.62]{hm4} is a little more involved than alluded to above. That source
    \begin{itemize}
    \item first equips $\Aut(G)$ with the compact-open topology, denoted there by $\cO_1$;
    \item and then strengthens that to what is then denoted by $\cO_1\vee \cO_2$, where $\cO_2$ is the pullback of $\cO_1$ through the inverse map. 
    \end{itemize}
    This last step is intended to ensure the continuity of the inverse on $\Aut(G)$ (since the latter ought to be a topological {\it group}), but does not seem to me to be necessary. In other words, it appears that the compact-open topology already makes $\Aut(G)$ into a topological group.

    The (joint) continuity of composition is already noted in \cite[Lemma 6.62]{hm4}, so the only issue is the continuity of the inverse. To check it, consider a uniformly convergent {\it net} \cite[Definition 11.2]{wil-top} $\alpha_i\to \alpha$ in $\Aut(G)$. If $\alpha_i^{-1}$ were {\it not} uniformly convergent to $\alpha^{-1}$, we would have
    \begin{equation*}
      \alpha_i^{-1}(x_i)\in \alpha^{-1}(x_i)F,\quad \text{for some closed }F\subset G\setminus\{1\}
    \end{equation*}
    and a net $(x_i)\in G$. Set
    \begin{equation*}
      y_i:=\alpha_i^{-1}(x_i),\quad z_i:=\alpha^{-1}(x_i). 
    \end{equation*}
    Passing, if needed, to convergent subnets (which nets always have, since the ambient space is compact \cite[Theorems 11.5 and 17.4]{wil-top}), we can assume that $y_i\to y$ and $z_i\to z$. But then
    \begin{equation*}
      \begin{aligned}
        \alpha_i\to \alpha
        \Longrightarrow
        & x_i=\alpha_i(y_i)\to \alpha(y)\text{ and}\\
        & x_i=\alpha(z_i)\to \alpha(z).
      \end{aligned}    
    \end{equation*}
    It follows that $y=z$ (because $\alpha$ is one-to-one), but on the other hand
    \begin{equation*}
      y_i\in z_iF\Longrightarrow y\in zF;
    \end{equation*}
    this is the desired contradiction.
  \item One reason why the stronger topology might have appeared necessary is that $\Aut(G)\subset \End(G)$ is not generally {\it closed} in the uniform topology: see \Cref{ex:conotclosed}.
  \end{enumerate}
\end{remarks}

\begin{example}\label{ex:conotclosed}
  Let $G:=(\bZ/2)^{\bZ_{\ge 0}}$ (the group of binary $\bZ_{\ge 0}$-indexed sequences), and for each positive integer $n$ denote by $\alpha_n\in \Aut(G)$ the automorphism that cycles the first $n$ terms of a sequence leftward and leaves the rest fixed. The sequence converges uniformly to the surjective, 2-to-1 endomorphism
  \begin{equation*}
    (t_n)_{n\in \bZ_{\ge 0}}
    \xmapsto{\quad}
    (t_{n+1})_{n\in \bZ_{\ge 0}}.
  \end{equation*}
  of $G$. 
\end{example}

In general, the equivalent conditions of \Cref{cor:lgcpct} do not imply that $\Aut(H)$ itself is compact:

\begin{example}\label{ex:qhat}
  Take $H:=\widehat{\bQ}$, the Pontryagin dual of the discrete additive group $(\bQ,+)$. We have
  \begin{equation*}
    \Aut(H)\cong \Aut(\bQ,+)\cong\text{ the multiplicative group }(\bQ^{\times},\cdot),
  \end{equation*}
  acting by multiplication. Because $\Aut(H)$ is discrete torsion-free it has only trivial compact subgroups, so $H$ meets the criteria of \Cref{cor:lgcpct}. 
\end{example}

Note, though, that in \Cref{ex:qhat} the automorphism group does have a {\it largest} compact subgroup: the trivial one. Naturally, if $\Aut(H)$ has a largest compact group, then the filtration condition in \Cref{cor:lgcpct} \Cref{item:15} holds. One might then ask whether the converse holds; that it does not can be seen from \Cref{ex:nolgst}.

\begin{example}\label{ex:nolgst}
  \cite[Lemma 2.1]{fourn_elem-2} gives an example of an abelian group $D$ with $\Aut(D)\cong \bZ/2$. If we modify that construction (as we will momentarily) so as to obtain $\Aut(D)\cong (\bZ/2)^{\aleph_0}$ for discrete abelian $D$, then we can set $H:=\widehat{D}$:
  \begin{equation*}
    \Aut(H)\cong \Aut(D)\cong (\bZ/2)^{\aleph_0}\text{ (with the discrete topology)},
  \end{equation*}
  which means that every compact subgroup of $\Aut(H)$ is finite, $\Aut(H)$ is the union of these finite subgroups, but it is not itself compact.

  On to the construction then: recall first the groups in \cite[Lemma 2.1]{fourn_elem-2}: for a set
  \begin{equation*}
    \cP = \{p_i\ |\ 1\le i\le k\}\sqcup \{p_{ij}\ |\ 1\le i<j\le k\}
  \end{equation*}
  of distinct primes and a set $\cS=\{s_i\ |\ 1\le i\le k\}$ of $\bQ$-linearly independent reals, define $D_{\cS,\cP}$ to be the subgroup of $(\bR,+)$ generated by
  \begin{equation*}
    \frac {s_i}{p_i^n},\ 1\le i\le k
    \quad\text{and}\quad
    \frac {s_i+s_j}{p_{ij}^n},\ 1\le i<j\le k
    \quad\text{for}\quad
    n\in \bZ_{>0}.
  \end{equation*}
  To modify that example as desired, take
  \begin{equation*}
    D=\bigoplus_{t=1}^{\infty} D_{\cS_t,\cP_t}
  \end{equation*}
  for mutually disjoint sets $\cS_t$ with $\coprod_{t}\cS_t$ still $\bQ$-linearly independent and mutually disjoint sets $\cP_t$ of primes. A simple adaptation of the argument proving \cite[Lemma 2.1]{fourn_elem-2} then shows that the automorphisms of $D$ are precisely those acting separately as automorphisms of the individual summands $D_{\cS_t,\cP_t}$, so indeed
  \begin{equation*}
    \Aut(D)\cong \prod_{t}\Aut(D_{\cS_t,\cP_t})\cong (\bZ/2)^{\aleph_0}. 
  \end{equation*}
\end{example}

For ``smaller'' groups things are different. Per common practice (e.g. \cite[Definition A1.59]{hm4}), define the {\it rank} of a (discrete) abelian group $D$ by
\begin{equation*}
  \rk(D):=\dim_{\bQ} D\otimes_{\bZ}\bQ.
\end{equation*}


\begin{theorem}\label{th:cpctconn}
  Let $H$ be a compact connected Lie group, $Z:=Z(H)$ its center and $Z_0$ the identity component of that center. Consider the following conditions:
  \begin{enumerate}[(a)]
  \item\label{item:16} $H$ has no central 2-torus.
  \item\label{item:17} The Pontryagin dual $\widehat{Z}$ of the center of $H$ has rank $\le 1$.
  \item\label{item:18} The automorphism group $\Aut(H)$ is compact.
  \item\label{item:20} The automorphism group $\Aut(H)$ has a largest compact subgroup.
  \item\label{item:23} The finite-order automorphisms of $Z_0$ induced by $\Aut(H)$ are precisely the identity and the inversion automorphism $z\mapsto z^{-1}$.
  \item\label{item:19} The equivalent criteria of \Cref{cor:lgcpct}. 
  \end{enumerate}
  We then have the implications
  \begin{equation*}
    \Cref{item:16} \iff \Cref{item:17} \iff \Cref{item:18}
    \qquad
    \Longrightarrow
    \qquad
    \Cref{item:20} \iff \Cref{item:23} \iff \Cref{item:19}.
  \end{equation*}
\end{theorem}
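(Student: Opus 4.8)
The plan is to route everything through the restriction homomorphism $\rho\colon\Aut(H)\to\Aut(Z_0)\cong GL_n(\bZ)$, where $n:=\dim Z_0=\rk\widehat Z$, using the structure of a compact connected Lie group: $H=Z_0\cdot H'$ with $H':=[H,H]$ compact connected semisimple, $F_0:=Z_0\cap H'$ a finite subgroup of $Z(H')$, and $H\cong(Z_0\times H')/F_0$ with $F_0$ embedded diagonally. The equivalence \Cref{item:16}$\iff$\Cref{item:17} is bookkeeping: $Z(H)$ is a compact abelian Lie group with identity component $Z_0$, hence $\widehat Z$ is finitely generated of free rank $\dim Z_0$, and ``no central $2$-torus'' says precisely $\dim Z_0\le1$. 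For \Cref{item:17}$\iff$\Cref{item:18} I will use two properties of $\rho$: its kernel is compact, being identifiable with the closed subgroup $\{\beta\in\Aut(H')\colon\beta|_{F_0}=\id\}$ of the compact group $\Aut(H')$; and its image $\Gamma:=\rho(\Aut(H))$ has finite index in $GL_n(\bZ)$, since it contains the principal congruence subgroup $\Gamma(N)$ for $N$ an exponent of $F_0$ (an automorphism of $Z_0$ fixing $F_0$ extends to $H$ by the identity on $H'$). Then $n\le1$ forces $\Gamma$ finite, so $\Aut(H)$ is an extension of a finite group by a compact one and is compact, while for $n\ge2$ the group $\Gamma$ is infinite and discrete and thus not a continuous image of a compact group, so $\Aut(H)$ is non-compact. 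The implication \Cref{item:18}$\Rightarrow$\Cref{item:20} is free: a compact group is its own largest compact subgroup.

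For the remaining three conditions I reduce to the arithmetic of $\Gamma\le GL_n(\bZ)$. Since $\ker\rho$ is compact and $\Gamma$ discrete, $\rho^{-1}(\bar M)$ is compact for every finite $\bar M\le\Gamma$ (a finite union of $\ker\rho$-cosets) and $\rho(M)$ is finite for every compact $M\le\Aut(H)$; hence $M\mapsto\rho(M)$ and $\bar M\mapsto\rho^{-1}(\bar M)$ match compact subgroups of $\Aut(H)$ with finite subgroups of $\Gamma$ compatibly with inclusion, the existence of a largest element, and filteredness. By \Cref{cor:lgcpct}, condition \Cref{item:19} becomes ``the finite subgroups of $\Gamma$ form a filtered poset'' and \Cref{item:20} becomes ``$\Gamma$ has a largest finite subgroup''; and since, by Minkowski's lemma, the finite subgroups of $\Gamma\le GL_n(\bZ)$ embed into $GL_n(\bZ/3)$ and so have bounded order, a filtered poset of them has a largest element (any member of maximal order). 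So all of \Cref{item:19}, \Cref{item:20}, \Cref{item:23} will follow once each is shown equivalent to ``$\Gamma$ has a largest finite subgroup''.

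It remains to bring in \Cref{item:23}. Identifying $\Aut(Z_0)$ with $GL_n(\bZ)$ through its action on $\widehat{Z_0}$, inversion is $-I$, so \Cref{item:23} says exactly that the torsion elements of $\Gamma$ are precisely $\{\pm I\}$; note $-I\in\Gamma$ always, since the Chevalley involution of $H'$ (inverting a maximal torus, hence $F_0$) glues with inversion of $Z_0$ to an automorphism of $H$. If the torsion of $\Gamma$ is $\{\pm I\}$, then $\{\pm I\}$ contains every finite subgroup and is the largest one. Conversely, let $M$ be the largest finite subgroup of $\Gamma$; it is normal in $\Gamma$ (its conjugates are finite of the same order), so $C_\Gamma(M)$ has finite index in $\Gamma$, hence in $GL_n(\bZ)$; being finite-index, $C_\Gamma(M)$ meets each one-parameter group of transvections $\{I+tE_{ij}\colon t\in\bZ\}$ nontrivially, so contains some $I+k_{ij}E_{ij}$ with $k_{ij}\neq0$, and a matrix commuting with all such transvections is scalar; thus $M$ consists of integral scalars, i.e. $M=\{\pm I\}$, and then the torsion of $\Gamma$, which generates finite subgroups sitting inside $M$, equals $\{\pm I\}$. (For $n\le1$, where $\Gamma=\{\pm1\}$, this is immediate.) Together with the previous paragraph this gives \Cref{item:20}$\iff$\Cref{item:23}$\iff$\Cref{item:19}.

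The crux is the converse just used, \Cref{item:20}$\Rightarrow$\Cref{item:23}: extracting the normality of the maximal finite subgroup and then collapsing it to $\{\pm I\}$ via the structure of finite-index subgroups of $GL_n(\bZ)$. Beyond that, two routine but not vacuous inputs deserve care: the compact-open topology on $\Aut(H)$ and the continuity of restriction to characteristic subgroups (handled by the discussion preceding this theorem), and the standard compactness of $\Aut(H')$ for $H'$ compact connected semisimple, which is what makes $\ker\rho$ compact.
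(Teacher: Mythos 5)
Your argument is correct, and its skeleton is the same as the paper's: everything is funneled through the restriction map $\rho:\Aut(H)\to\Aut(Z_0)\cong GL_n(\bZ)$, whose kernel is compact and whose image has finite index (the paper's \Cref{pr:cpctoncent}, \Cref{cor:haslgst-hasfin}, \Cref{pr:cpctlie-lgstcpct} and \Cref{pr:justzz1} package exactly the correspondence between compact subgroups of $\Aut(H)$ and finite subgroups of $\rho(\Aut(H))$, the bounded-order argument, and the normality-plus-centralizer collapse of the largest finite subgroup to $\{\pm I\}$). There is one genuine and worthwhile divergence: for \Cref{item:20}$\Rightarrow$\Cref{item:23} one must actually exhibit $-I$ in the image of $\rho$, and where the paper verifies type by type (using the classification of simple simply connected compact groups and tables of their centers and outer automorphisms) that each simple factor admits an automorphism inverting its center, you invoke the Chevalley involution of $H'=[H,H]$, which inverts an entire maximal torus and hence $F_0=Z_0\cap H'$, and glue it with inversion on $Z_0$. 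This is a uniform, classification-free argument and is arguably cleaner; its only cost is that one must know (or prove) that the Chevalley involution preserves the compact real form and descends from the universal cover to $H'$ --- both standard. Two smaller cosmetic differences also work in your favor: using the full principal congruence subgroup $\Gamma(N)$ (rather than only upper-triangular unipotents) lets you centralize against all transvections $I+k_{ij}E_{ij}$, so the ``centralizer is scalar'' step is immediate, and Minkowski's lemma replaces the finite-index torsion-free subgroup of $GL_n(\bZ)$ in bounding orders of finite subgroups; both routes are sound.
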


We make a detour for a number of preparatory results. For the following statement, recall \cite[\S I.9.3, D\'efinition 3]{bourb_top-1-4} that a subspace $A\subseteq X$ of a topological space is {\it relatively compact} if it is contained in a compact subspace. Equivalently, since we will be working with Hausdorff spaces, this means that the closure $\overline{A}$ is compact.

\begin{proposition}\label{pr:cpctoncent}
  Let $H$ be a compact Lie group and $Z_0(H_0)\cong \bT^k$ the identity component of the center of its identity component.

  A subgroup of $\Aut(H)$ is relatively compact if and only if its image in
  \begin{equation*}
    \Aut(Z_0(H_0))\cong \Aut(\bT^k)\cong GL(k,\bZ)
  \end{equation*}
  is finite.
\end{proposition}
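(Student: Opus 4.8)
The plan is to isolate the single source of non-compactness in $\Aut(H)$ via the restriction homomorphism to the central torus. First I would observe that $H_0$ and $Z_0(H_0)$ are characteristic subgroups of $H$, so there is a restriction homomorphism
\begin{equation*}
  r\colon \Aut(H)\longrightarrow \Aut\bigl(Z_0(H_0)\bigr)\cong \Aut(\bT^k)\cong GL(k,\bZ),
\end{equation*}
the last identification being via the induced action on the character lattice $\widehat{Z_0(H_0)}\cong\bZ^k$. It is continuous, because uniform convergence of automorphisms on $H$ forces uniform convergence on the closed subgroup $Z_0(H_0)$, while the compact-open topology on $\Aut(\bT^k)\cong GL(k,\bZ)$ is discrete; hence $\ker r$ is open and closed in $\Aut(H)$. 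The ``only if'' half is then immediate: for a relatively compact $\Gamma\le\Aut(H)$, $r(\ol\Gamma)$ is a compact subgroup of the discrete group $GL(k,\bZ)$, hence finite, so $r(\Gamma)$ is finite. For the converse, finiteness of $r(\Gamma)$ puts $\Gamma$ inside $r^{-1}(r(\Gamma))$, a union of finitely many cosets of $\ker r$; since $\Aut(H)$ is a topological group (composition jointly continuous by \cite[Lemma 6.62]{hm4}, inversion by \Cref{res:coenough}), each such coset is homeomorphic to $\ker r$. So it suffices to prove that $\ker r$ is compact.

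To that end I would pass to differentials. Write $\fh=\mathrm{Lie}(H)=\fz\oplus\fs$, with $\fz$ the centre (so $\dim\fz=k$ and $Z_0(H_0)=\exp\fz$) and $\fs=[\fh,\fh]$ semisimple, of compact type since $H$ is compact. Differentiation is a continuous homomorphism $\Aut(H)\to\Aut(\fh)$ (continuity because $d\alpha(X)=\exp^{-1}(\alpha(\exp X))$ for $X$ in a fixed small ball), and $\Aut(\fh)=GL(\fz)\times\Aut(\fs)$ — each automorphism of $\fh$ preserving both the radical $\fz$ of the Killing form and the derived algebra $\fs$, and every pair of maps on the two summands gluing back since $[\fz,-]=0$. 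The crucial fact is that $\Aut(\fs)$ is \emph{compact}: automorphisms preserve the Killing form $B$ of $\fs$, and $-B$ is positive definite, so $\Aut(\fs)$ is a closed subgroup of the compact orthogonal group of $-B$. Now for $\alpha\in\ker r$ we have $\alpha|_{Z_0(H_0)}=\id$, hence $d\alpha|_{\fz}=\id$ and $d\alpha\in\{\id_{\fz}\}\times\Aut(\fs)$. Fixing coset representatives $h_1,\dots,h_r$ of $H_0$ in $H$ (finitely many, as $H$ is a Lie group), the automorphism $\alpha$ is completely determined by
\begin{equation*}
  \Psi(\alpha):=\bigl(d\alpha|_{\fs};\ \alpha(h_1),\dots,\alpha(h_r)\bigr)\in \Aut(\fs)\times H^r,
\end{equation*}
since $\alpha|_{H_0}$ is the unique automorphism of the connected group $H_0$ with differential $\id_{\fz}\oplus d\alpha|_{\fs}$ and $\alpha(h_jh_0)=\alpha(h_j)\alpha(h_0)$ for $h_0\in H_0$. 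The map $\Psi$ is a continuous injection ($\alpha\mapsto d\alpha$ and the evaluations $\alpha\mapsto\alpha(h_j)$ are continuous), it is a homeomorphism onto its image, and the image is closed in the compact space $\Aut(\fs)\times H^r$, being carved out by the closed conditions expressing that the map reconstructed from a tuple is a well-defined group automorphism. Hence $\ker r$ is homeomorphic to a closed subspace of a compact space, so it is compact, completing the proof.

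The one genuinely load-bearing input is the compactness of $\Aut(\fs)$, the precise reflection of the slogan that every non-compact direction of $\Aut(H)$ already originates from the $GL(k,\bZ)$-action on the central torus — exactly the datum recorded by $r$. The remaining ingredients are routine Lie-theoretic bookkeeping: that for the connected group $H_0$ the passage between automorphisms and differentials is a homeomorphism onto its image (needed for the continuity of $\Psi^{-1}$), and that $\Psi$ has closed image. The mildly fiddly — not hard — part is arranging the disconnected case so that all of the $H/H_0$ information is stored in the manifestly compact factor $H^r$.
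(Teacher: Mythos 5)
Your proof is correct and rests on the same two pillars as the paper's: the reductive decomposition of $H$ into central torus and semisimple part, and the compactness of the automorphism group of the semisimple part (you take this at the Lie-algebra level via the Killing form, the paper at the group level via \cite[Theorem 6.61]{hm4} applied to $S=\prod S_i$ in the decomposition $H_0\cong (Z_0\times S)/\Delta$). The only substantive divergence is the disconnected case --- you store the $\pi_0$-data as a closed subspace of the compact space $\Aut(\fs)\times H^r$, whereas the paper identifies the kernel of $\Aut(H)\to\Aut(H_0)\times\Aut(\pi_0(H))$ with the compact cocycle space $Z^1(\pi_0(H),Z(H))$ --- and the one glib spot in your write-up, namely justifying continuity of $\alpha\mapsto d\alpha$ by $d\alpha(X)=\exp^{-1}(\alpha(\exp X))$ on a \emph{fixed} small ball (which presupposes a uniform bound on $\lVert d\alpha\rVert$), is harmless where you actually use it, since on $\ker r$ the differentials range over the compact set $\{\id_{\fz}\}\times\Aut(\fs)$.
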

\begin{proof}
  The restriction morphism
  \begin{equation*}
    \Aut(H)\ni \alpha\mapsto \alpha|_{Z_0(H_0)} \in GL(k,\bZ)
  \end{equation*}
  is continuous, so the forward implication is immediate; we thus focus on the converse.

  \begin{enumerate}[(I)]
  \item {\bf Reducing the problem to connected groups.} Let $\pi_0(H):=H/H_0$ be the group of connected components of $H$. An automorphism of $H$ induces one on both $H_0$  and $\pi_0(H)$, hence a morphism
    \begin{equation}\label{eq:htoh0pi0}
      \Aut(H)\to \Aut(H_0)\times \Aut(\pi_0(H)). 
    \end{equation}
    I claim that the kernel of that morphism is compact, so relative compactness transports back and forth between $\Aut(H)$ and $\Aut(H_0)$ (since $\pi_0(H)$ is finite); this will effect the desired reduction.

    To prove the claim, consider an automorphism $\alpha$ in the kernel of \Cref{eq:htoh0pi0} and set
    \begin{equation}\label{eq:phia}
      \varphi(x):=\alpha(x)x^{-1},\ \forall x\in H. 
    \end{equation}
    The map is constant on each left $H_0$-coset, and hence also on {\it right} $H_0$-cosets. This means that $\varphi$ descends to a map (denoted by the same symbol) $\varphi:\pi_0(H)\to Z(H)$. Furthermore, the fact that $\alpha$ was a group automorphism implies that $\varphi$ is a {\it 1-cocycle} \cite[\S 2.3]{ev-coh} with respect to the action
    \begin{equation*}
      \pi_0(H)\times Z(H) \ni (x,y)\xmapsto[]{\quad} {}^x y\in Z(H)
    \end{equation*}
    of $\pi_0(H)$ on $Z(H)$ (action induced by conjugation):
    \begin{equation*}
      \varphi(xy) = \varphi(x)\cdot {}^x\varphi(y),\ \forall x,y.
    \end{equation*}
    Conversely, every 1-cocycle $\varphi$ gives an automorphism $\alpha$ in the kernel of \Cref{eq:phia} via the same \Cref{eq:phia}. We thus have an identification of that kernel with the space $Z^1(\pi_0(G),Z(H))$ of 1-cocycles, which is plainly compact.

  \item {\bf Connected $H$.} We have \cite[Theorem 9.24]{hm4}
    \begin{equation}\label{eq:hprod}
      H\cong  H^* \big/ \Delta,\quad H^*:= Z_0\times \prod_{i=1}^{\ell}S_i,
    \end{equation}
    where
    \begin{itemize}
    \item $Z_0$ is the identity component of the center $Z(H)$;
    \item $S_i$ are compact, connected, simple and simply-connected Lie groups;
    \item and $\Delta$ is a finite central subgroup of the product in question.
    \end{itemize}
    The product of the $S_i$ is precisely the universal cover of the commutator group $H'$ \cite[Corollary 9.6 and Theorem 9.19]{hm4}, so an automorphism of $H$ also induces one on that product $S:=\prod_i S_i$. $Z_0$ is also, of course, preserved by automorphisms, so
    \begin{equation*}
      \Aut(H)\cong \{\alpha\in \Aut(Z_0)\times \Aut(S)\ |\ \alpha(\Delta)=\Delta\}. 
    \end{equation*}
    The second factor $\Aut(S)$ is always compact \cite[Theorem 6.61]{hm4}, so the relative compactness of a subgroup of $\Aut(H)$ is equivalent to that of its image into $\Aut(Z_0)\cong GL(k,\bZ)$, i.e. to its finiteness.    
  \end{enumerate}
  This finishes the proof. 
\end{proof}

Observe, next, that the phenomenon operative in \Cref{ex:nolgst} does not occur for Lie groups:

\begin{proposition}\label{pr:cpctlie-lgstcpct}
  A compact Lie group $H$ satisfies the equivalent conditions of \Cref{cor:lgcpct} if and only if $\Aut(H)$ has a largest compact subgroup.
\end{proposition}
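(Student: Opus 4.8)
The plan is to transport the statement, via restriction to the central torus, into a purely arithmetic assertion about finite subgroups of $GL(k,\bZ)$, where it becomes elementary. Recall that by \Cref{cor:lgcpct} the two conditions there are equivalent, so it suffices to compare filteredness of the poset of compact subgroups of $\Aut(H)$ with the existence of a largest compact subgroup. The ``if'' direction (largest compact subgroup $\Rightarrow$ filtered poset) is immediate, so the content is the converse.

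First I would introduce the restriction homomorphism $r:\Aut(H)\to\Aut\bigl(Z_0(H_0)\bigr)\cong GL(k,\bZ)$ and put $\Gamma:=r\bigl(\Aut(H)\bigr)$. Since $\Aut(\bT^k)=GL(k,\bZ)$ is discrete, $r$ is continuous into a discrete group, so preimages of finite subsets are closed; combined with \Cref{pr:cpctoncent} this yields that (i) $\ker r$ is closed with finite image, hence compact, and (ii) for every finite subgroup $F\le\Gamma$ the subgroup $r^{-1}(F)$ is closed with finite image, hence compact, and satisfies $r\bigl(r^{-1}(F)\bigr)=F$. Conversely, every compact subgroup $A\le\Aut(H)$ has $r(A)$ compact in a discrete group, hence finite, so $A\le r^{-1}\bigl(r(A)\bigr)$. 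Thus $F\mapsto r^{-1}(F)$ is an order isomorphism from the poset of finite subgroups of $\Gamma$ onto a \emph{cofinal} subposet of the poset of compact subgroups of $\Aut(H)$ (injective because $r$ is onto $\Gamma$, cofinal by the last displayed inequality).

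Since passing to a cofinal subposet preserves, in both directions, filteredness as well as the presence of a greatest element, the proposition reduces to the claim that for a subgroup $\Gamma\le GL(k,\bZ)$ the poset of finite subgroups of $\Gamma$ is directed if and only if it has a greatest element. Here the hypothesis that $H$ is a Lie group is what makes the codomain $GL(k,\bZ)$ of $r$ available, and the relevant input is the classical Minkowski bound: the principal congruence subgroup of level $3$ in $GL(k,\bZ)$ is torsion-free of finite index $n=n(k)$, so every finite subgroup of $GL(k,\bZ)$ has order at most $n$. Granting this, assume the finite subgroups of $\Gamma$ form a directed poset and pick one of them, $F_{\max}$, of maximal order; for any finite $F'\le\Gamma$, directedness provides a finite $F''\le\Gamma$ containing both $F_{\max}$ and $F'$, and $|F''|\le|F_{\max}|$ forces $F''=F_{\max}$, hence $F'\le F_{\max}$. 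So $F_{\max}$ is the greatest finite subgroup of $\Gamma$, and $r^{-1}(F_{\max})$ is the largest compact subgroup of $\Aut(H)$.

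I do not anticipate a genuine obstacle: the Lie-theoretic content is entirely absorbed by \Cref{pr:cpctoncent} (already proved) together with the off-the-shelf fact that torsion in $GL(k,\bZ)$ is bounded, and the only mildly fiddly point is the elementary cofinality bookkeeping relating the poset of compact subgroups of $\Aut(H)$ to that of finite subgroups of $\Gamma$.
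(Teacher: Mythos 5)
Your proposal is correct and follows essentially the same route as the paper: restrict to the central torus $Z_0(H_0)\cong\bT^k$ via \Cref{pr:cpctoncent}, invoke the existence of a finite-index torsion-free subgroup of $GL(k,\bZ)$ to bound the orders of finite subgroups, and conclude that a filtered poset with bounded chains has a greatest element. The only cosmetic difference is that you make the cofinal-poset bookkeeping explicit where the paper argues directly that chains of compact subgroups of $\Aut(H)$ stabilize (and cites Cassels rather than Minkowski for the torsion bound).
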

\begin{proof}
  The backward implication is immediate, so we focus on the converse.
  
  $Z_0:=Z_0(H_0)$ is some torus, say $\bT^k$. By \Cref{pr:cpctoncent}, a closed subgroup of $\Aut(H)$ is compact if and only if its image in
    \begin{equation*}
      \Aut(Z_0)\cong \Aut(\bT^k)\cong \Aut(\bZ^k)\cong GL(k,\bZ)
    \end{equation*}
    is finite (or equivalently, compact).
    
    Being a finitely generated (e.g. \cite[Lemma 3.1]{miln_k}) subgroup of $GL(k,\bC)$, the integer general linear group has a finite-index torsion-free subgroup \cite[Chapter 5, Theorem 4.1]{cass_lf}. Every chain of finite subgroups must thus stabilize, so filtration of the set of compact subgroups of $\Aut(H)$ implies that that set must have a largest element (`largest' in the sense of inclusion).
\end{proof}

Per \Cref{pr:cpctoncent}, the non-compactness of $\Aut(H)$ is all still ``visible'' after an application of the restriction
\begin{equation}\label{eq:restoz0}
  \rho:\Aut(H)\to \Aut(Z_0(H_0)). 
\end{equation}
This observation extends to the property of having a largest compact subgroup, of interest here because of \Cref{pr:cpctlie-lgstcpct}. 

\begin{corollary}\label{cor:haslgst-hasfin}
  Let $H$ be a compact Lie group and set $Z_0:=Z_0(H_0)$.

  $\Aut(H)$ has a largest compact subgroup if and only if its image $\rho(\Aut(H)) \le \Aut(Z_0)$ through the map \Cref{eq:restoz0} has a largest finite subgroup.
\end{corollary}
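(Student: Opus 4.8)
The plan is to push everything through the restriction map $\rho$ of \eqref{eq:restoz0}, using \Cref{pr:cpctoncent} as the sole substantive input: that result identifies the relatively compact subgroups of $\Aut(H)$ with exactly those whose image in $\Aut(Z_0)$ is finite. Since $\Aut(H)$ is Hausdorff (see \Cref{res:coenough}), a closed subgroup of it equals its own closure and is therefore compact precisely when $\rho$ carries it to a finite set. Moreover $\Aut(Z_0)\cong GL(k,\bZ)$ is discrete, so every finite subset is closed, and hence $\rho^{-1}$ of a finite subgroup is again a closed subgroup of $\Aut(H)$.

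First I would treat the forward implication. Assume $M\le\Aut(H)$ is the largest compact subgroup; then $\rho(M)$ is finite, and I claim it is the largest finite subgroup of $\rho(\Aut(H))$. Indeed, for any finite subgroup $F\le\rho(\Aut(H))$ the preimage $\rho^{-1}(F)$ is closed with finite image, hence compact by the preceding paragraph, hence contained in $M$; applying $\rho$ and using $F\subseteq\rho(\Aut(H))$ yields $F=\rho(\rho^{-1}(F))\subseteq\rho(M)$.

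For the converse, suppose $\rho(\Aut(H))$ has a largest finite subgroup $N$. Then $\rho^{-1}(N)\le\Aut(H)$ is closed with image $N$, hence compact, and I claim it is the largest compact subgroup of $\Aut(H)$: any compact $C\le\Aut(H)$ has finite image $\rho(C)\le\rho(\Aut(H))$, so $\rho(C)\subseteq N$ and therefore $C\subseteq\rho^{-1}(N)$.

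I do not expect a genuine obstacle here; the only delicate point is the ``closed $+$ relatively compact $\Rightarrow$ compact'' step, which rests on the Hausdorffness of $\Aut(H)$ (discussed in \Cref{res:coenough}) together with \Cref{pr:cpctoncent}, and on the routine bookkeeping that $\rho^{-1}$ of a subgroup is a subgroup and that $\rho$ is surjective onto its image.
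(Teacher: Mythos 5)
Your proof is correct and follows essentially the same route as the paper's: both reduce the question to finite subgroups of $\rho(\Aut(H))$ via \Cref{pr:cpctoncent}. The paper phrases this by noting that $\ker\rho$ is compact and normal, so that compact subgroups of $\Aut(H)$ correspond to finite subgroups of the image; your direct verification of the ``largest element'' property on each side (using that $\rho^{-1}$ of a finite subgroup is closed with finite image, hence compact) is the same idea spelled out, and if anything handles the bookkeeping slightly more carefully than the paper's appeal to a poset isomorphism.
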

\begin{proof}
  The proof of \Cref{pr:cpctoncent} shows that the kernel of \Cref{eq:restoz0} is compact. It is of course also normal, so there is an isomorphism between the (inclusion-ordered) poset of compact subgroups of $\Aut(H)$ and that of compact (hence finite) subgroups of
  \begin{equation*}
    \rho(\Aut(H))\le \Aut(Z_0)\cong GL(k,\bZ),\quad k:=\dim Z_0. 
  \end{equation*}
\end{proof}

\begin{proposition}\label{pr:justzz1}
  Let $H$ be a compact connected Lie group, $Z_0:=Z_0(H)$, and $\rho:\Aut(H)\to \Aut(Z_0)$ the restriction map \Cref{eq:restoz0}.

  $\Aut(H)$ has a largest compact subgroup if and only if the torsion of $\rho(\Aut(H))$ is contained in the order-$(\le 2)$ group generated by
  \begin{equation}\label{eq:zz1}
    Z_0\ni z\mapsto z^{-1}\in Z_0. 
  \end{equation}
\end{proposition}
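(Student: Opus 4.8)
The plan is to reduce, via \Cref{cor:haslgst-hasfin}, to a purely arithmetic statement about the image $G:=\rho(\Aut(H))\le\Aut(Z_0)\cong GL(k,\bZ)$, where $k:=\dim Z_0$: namely that $G$ has a largest finite subgroup if and only if the torsion of $G$ is contained in $\{\pm I_k\}$, which is precisely the image of the order-$(\le 2)$ group generated by the inversion \Cref{eq:zz1}. The cases $k\le 1$ are vacuous, since $GL(k,\bZ)$ is then $\{I\}$ or $\{\pm I\}$ and both conditions hold automatically; so I would assume $k\ge 2$ throughout. The easy implication is immediate: if every torsion element of $G$ lies in $\{\pm I_k\}$, then every finite subgroup of $G$ is contained in $G\cap\{\pm I_k\}$, which is therefore its largest finite subgroup.

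The key structural input for the converse is that $G$ contains a principal congruence subgroup $\Gamma(N):=\ker\big(GL(k,\bZ)\to GL(k,\bZ/N)\big)$ for a suitable $N$; in particular $G$ is of finite index in $GL(k,\bZ)$, hence ``big enough'' to conjugate within. To prove this I would invoke the decomposition \Cref{eq:hprod}, writing $H\cong(Z_0\times S)/\Delta$ with $S$ semisimple simply connected and $\Delta$ finite, and take $N$ to be the exponent of the finite image $\bar\Delta$ of $\Delta$ under the projection $Z_0\times S\to Z_0$. For $\phi\in GL(k,\bZ)$ with $\phi\equiv I\pmod N$, the condition on $\phi$ forces $\phi$ to fix $\bar\Delta$ pointwise, so the automorphism $\phi\times\id_S$ of $Z_0\times S$ fixes every element of $\Delta$ and descends to an automorphism of $H$ restricting to $\phi$ on $Z_0$; thus $\phi\in G$. (One similarly checks $-I\in G$ by picking an automorphism of $S$ inverting $Z(S)$, matching condition \Cref{item:23} of \Cref{th:cpctconn}, although this is not logically needed here.)

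For the converse proper, suppose $G$ has a largest finite subgroup $F$. Then $\langle t\rangle\le F$ for every torsion $t\in G$, so $G$ has only finitely many torsion elements; it thus suffices to derive a contradiction from the existence of a torsion $g\in G$ with $g\ne\pm I$, by exhibiting infinitely many torsion elements of $G$. Since $g$ is not scalar it cannot commute with every elementary matrix $I+e_{ij}$ ($i\ne j$) — otherwise it would centralize $SL(k,\bZ)$, hence be scalar by irreducibility, hence $\pm I$. Fix such a pair $i\ne j$; then $u^N:=I+Ne_{ij}\in\Gamma(N)\le G$, and the conjugates $u^{Nn}gu^{-Nn}$ ($n\in\bZ$) all lie in $G$ and share the order of $g$. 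They are pairwise distinct, because $u^{Nc}\in C_{GL(k,\bZ)}(g)$ is equivalent to $Nc\,(ge_{ij}-e_{ij}g)=0$ in $M_k(\bZ)$, hence (for $c\ne0$, using torsion-freeness) to $I+e_{ij}\in C_{GL(k,\bZ)}(g)$, which we have excluded. This yields infinitely many torsion elements in $G$, the desired contradiction, so the torsion of $G=\rho(\Aut(H))$ is contained in $\{\pm I_k\}$.

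The step I expect to be the crux is the structural claim $\Gamma(N)\le\rho(\Aut(H))$: once that foothold is in place, the remainder is an elementary conjugation argument inside $GL(k,\bZ)$ and in fact only uses that $\rho(\Aut(H))$ has finite index. Everything hinges on having enough room in the image to spread a putative ``extra'' finite-order automorphism of $Z_0$ into infinitely many distinct ones.
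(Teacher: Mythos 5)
Your proposal is correct and follows essentially the same route as the paper: the same reduction via \Cref{cor:haslgst-hasfin}, and the same use of the decomposition \Cref{eq:hprod} to lift congruence--unipotent matrices (the paper's $T_d$, your $\Gamma(N)$) to automorphisms of $H$ fixing $\Delta$ pointwise. Your endgame is a mild variant of the paper's -- where the paper observes that the largest finite subgroup is normal, hence centralized by a deep enough unipotent congruence subgroup whose centralizer has torsion $\{\pm I\}$, you instead conjugate a putative non-scalar torsion element by powers of a single $I+Ne_{ij}$ to produce infinitely many torsion elements -- but both hinge on exactly the same fact, that a non-scalar matrix fails to commute with some elementary unipotent, so the arguments are interchangeable.
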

\begin{proof}
  According to \Cref{cor:haslgst-hasfin}, $\Aut(H)$ has a largest compact subgroup precisely when its restriction $\rho(\Aut(H))$ to $Z_0$ has a largest finite subgroup. In turn, this is equivalent to the requirement that all finite-order elements of $\rho(\Aut(H))$ constitute a subgroup.

  In particular, the backward implication ($\Longleftarrow$) is obvious, so we focus on the converse. Furthermore, $Z_0$ is some torus $\bT^k$; if $k\le 1$ there is nothing to prove (since the only non-trivial automorphism of a circle is \Cref{eq:zz1}), so we assume $k\ge 2$ throughout.

  
   Recall the decomposition \Cref{eq:hprod}:
  \begin{equation*}
    H= H^*/\Delta,\quad H^*:= Z_0\times \prod_{i=1}^{\ell}S_i,
  \end{equation*}
  where $S:=\prod S_i$ is the universal cover of the commutator group $H'$ of $H$. What is more, the identification $\Delta\cong Z_0\cap H'$ of \cite[Theorem 9.24]{hm4} implies that $\Delta\le H^*=Z_0\times S$ is the graph
  \begin{equation*}
    \{(\varphi z,z)\ |\ z\in \Delta_S\le Z(S)\}
    \quad\text{of a surjection}\quad
    \Delta_S\to Z_0\cap H'
  \end{equation*}
  of finite abelian groups, where $\Delta_S\le Z(S)$ is some subgroup of the center of $S$. In particular, the left-hand components $\varphi(z)$ of $\Delta$ precisely make up the finite group $\Delta_0:=Z_0\cap H'$.
  
  Now, because $\Delta_0$ is a finite subgroup of the $k$-torus $Z_0$, it must be contained in the $d$-torsion subgroup $Z_0[d]<Z_0$ for some $d$. Denote by
  \begin{equation*}
    T_d < GL(k,\bZ)\cong \Aut(Z_0)\cong \Aut(\bT^k) 
  \end{equation*}
  the group of upper-triangular {\it unipotent} matrices (i.e. diagonal entries 1) whose off-diagonal entries are divisible by $d$. Any $\alpha_0\in T$, regarded as an automorphism of $Z_0$, will induce an automorphism $(\alpha_0,\id)$ of $H^*\cong Z_0\times S$ that fixes $\Delta$ pointwise (because $\alpha_0$ fixes $\Delta_0$ pointwise). It follows that $(\alpha_0,\id)$ descends to an
  \begin{equation*}
    \alpha\in \Aut(H) = \Aut(H^*/\Delta)
    \quad\text{with}\quad
    \rho(\alpha)=\alpha_0. 
  \end{equation*}
  In short, we now have a subgroup $\widetilde{T}_d\le \Aut(H)$, consisting of the lifts $\alpha$ for $\alpha\in T_d$, as just constructed, mapped by $\rho$ isomorphically onto $T_d<GL(k,\bZ)$:
  \begin{equation*}
    \Aut(H) > \widetilde{T}_d\xrightarrow[\cong]{\quad\rho\quad} T_d<GL(k,\bZ).
  \end{equation*}
  If $\rho(\Aut(H))$ has a largest finite subgroup then that subgroup must of course be normal, and hence normalized by $\widetilde{T}_d$. It will then also be centralized by the analogous group $\widetilde{T}_{d'}$ by $d'\in \bZ_{>0}$ sufficiently large in the sense of divisibility (i.e. divisible by $d$ and by sufficiently high powers of sufficiently many primes). To conclude, observe that the centralizer of $T_{d'}$ in $GL(k,\bZ)$ (for any $d'$) consists precisely of the scalar matrices $\pm 1$.
\end{proof}

There are two obvious ways to (try to) extend \Cref{pr:justzz1} to disconnected $H$: $Z_0$ could be either $Z_0(H_0)$ or $Z_0(H)$. Neither version goes through.

\begin{example}\label{ex:torusorder3}
  Let $H:=\bT^2\rtimes \bZ/3$, with the order-3 automorphism
  \begin{equation*}
    \alpha\in GL(2,\bZ)\cong \Aut(\bZ^2)\cong \Aut(\bT^2)
  \end{equation*}
  attached to a generator of $\bZ/3$ given by the matrix $
  \begin{pmatrix}
    \phantom{-}0&\phantom{-}1\\
    -1&-1
  \end{pmatrix}
  $. $\Aut(H)$ has a finite-index subgroup consisting of automorphisms fixing $\bZ/3$ pointwise and identifiable with the centralizer of $\alpha$ in $\Aut(\bT^2)$. That centralizer is the order-6 cyclic group generated by $-\alpha$, so $\Aut(H)$ itself is finite. Automorphisms of $H$ restrict to automorphisms of $Z_0(H_0)=\bT^2$ other than \Cref{eq:zz1}, however.
  
  This shows that the requirement of \Cref{pr:justzz1} with $Z_0:=Z_0(H_0)$ is not generally necessary, for disconnected $H$, in order to have compact $\Aut(H)$.
\end{example}

On the other hand, meeting the requirement of \Cref{pr:justzz1} with $Z_0:=Z_0(H)$ is typically not {\it sufficient}, as the following variant of \Cref{ex:torusorder3} shows. 

\begin{example}\label{ex:torus2order3}
  This time set $H:=\bT^4\rtimes \bZ^3$, with a generating automorphism $\alpha$ operating as a block-diagonal matrix, with two blocks identical to the $2\times 2$ matrix of \Cref{ex:torusorder3}:
  \begin{equation*}
    \alpha=
    \begin{pmatrix}
      \phantom{-}0&\phantom{-}1&\phantom{-}0&\phantom{-}0\\
      -1&-1&\phantom{-}0&\phantom{-}0\\
      \phantom{-}0&\phantom{-}0&\phantom{-}0&\phantom{-}1\\
      \phantom{-}0&\phantom{-}0&-1&-1
    \end{pmatrix}
    \in GL(4,\bZ)\cong \Aut(\bT^4). 
  \end{equation*}
  The center of $H$ consists of the $\alpha$-invariants in $\bT^4$, and is thus trivial: $Z(H)=\{1\}$. So, then, is $Z_0(H)$, and the requirement of \Cref{pr:justzz1} is satisfied trivially. On the other hand, the $4\times 4$ matrices 
  \begin{equation*}
    \begin{pmatrix}
      -I&0\\
      \phantom{-}0&I
    \end{pmatrix}
    \quad\text{and}\quad
    \begin{pmatrix}
      I&I\\
      0&I
    \end{pmatrix}
  \end{equation*}
  (with the symbols depicting $2\times 2$ blocks) both commute with $\alpha$ and hence induce automorphisms of $H$. Conjugating the former repeatedly with the latter shows that there are infinitely many distinct order-2 automorphisms of $H$ (so no largest compact subgroup of $\Aut(H)$). 
\end{example}

\begin{example}\label{ex:lgstcpct-notcpct}
  We will construct a compact connected Lie group $H$ as a quotient $H^*/\Delta$ of the form \Cref{eq:hprod}, where
  \begin{itemize}
  \item $Z_0\cong \bT^2$;
  \item the semisimple factor $S=S_1\times S_2$ will be of the form $SU(3^k)\times SU(3^l)$ for $k>l\ge 2$ (so the two $S_i$ are the two special unitary groups in question);
  \item and $\Delta$ will be the graph of a morphism
    \begin{equation*}
      Z(S)\cong Z(S_1)\times Z(S_2) \cong \bZ/3^k\times \bZ/3^l\xrightarrow[]{\quad\varphi\quad}\Delta_0<\bT^2
    \end{equation*}
    which identifies the order-3 subgroups of the two factors $\bZ/3^k$ and $\bZ/3^l$ (so the kernel of $\varphi$ is a copy of $\bZ/3$ embedded anti-diagonally in the product $\bZ/3^k\times \bZ/3^l$).
  \end{itemize}
  The automorphism group of $SU(d)$, $d\ge 3$ is $PSU(d)\rtimes (\bZ/2)$ with $\bZ/2$ acting as complex conjugation \cite[\S IX.5, Theorem 5]{jc}. Automorphisms of $S_i\cong SU(\bullet)$ will thus act either trivially on the centers of $S_i$ or by inversion, and the identification of the two $\bZ/3$ copies in those centers ensures that every element of $\Aut(H)$ either inverts all of $\Delta$ or fixes it pointwise.

  I claim that the only finite-order automorphisms of $\bT^2$ induced from automorphisms of $H$ are the identity and $z\mapsto z^{-1}$, so that $\Aut(H)$ has a largest compact subgroup by \Cref{pr:justzz1}.
  
  To check the claim, Consider an automorphism $\alpha\in \Aut(H)$ inducing a finite-order automorphism
  \begin{equation*}
    \rho \alpha\in GL(2,\bZ)\cong \Aut(\bT^2). 
  \end{equation*}
  We have a pushout decomposition
  \begin{equation*}
    SL(2,\bZ)\cong \bZ/6\coprod_{\bZ/2}\bZ/4
  \end{equation*}
  with
  \begin{equation*}
    \bZ/6\text{ generated by }
    \begin{pmatrix}
      0&-1\\
      1&\phantom{-}1
    \end{pmatrix}
    ,\quad
    \bZ/4\text{ generated by }
    \begin{pmatrix}
      0&-1\\
      1&\phantom{-}0
    \end{pmatrix}
  \end{equation*} 
  \cite[equation (3) and attached footnote 2 following Proposition 1.18]{Hain11}, so any finite subgroup of $SL(2,\bZ)$ can be conjugated inside either $\bZ/6$ or $\bZ/4$ \cite[\S I.4.3, Theorem 8]{ser_tr}. The square $\rho\alpha^2$ is a finite-order element of $SL(2,\bZ)$, so it can be so conjugated. But the $GL(2,\bZ)$-action
  \begin{equation*}
    \bT^2\ni (z,w)
    \xmapsto[]{\quad
      \begin{pmatrix}
        a&b\\
        c&d
      \end{pmatrix}
      \quad}
    (z^a w^b,\ z^c w^d)
  \end{equation*}
  on the torus is such that no non-trivial element of either $\bZ/6$ or $\bZ/4$ fixes (pointwise) a subgroup of $\bT^2$ as large as
  \begin{equation}\label{eq:imphi}
    \mathrm{im}(\varphi)\cong \bZ/3^k\times \bZ/3^{l-1},
  \end{equation}
  so $\varphi\alpha^2=1$. It follows, then, that the finite-order $\varphi\alpha$ are all involutions. Furthermore, the same argument shows that the only $\varphi\alpha$ in $SL(2,\bZ)$ are the identity and $z\mapsto z^{-1}$, so it remains to argue that no finite-order $\varphi\alpha$ have determinant $-1$ (i.e. eigenvalues $\pm 1$, each with multiplicity 1). 

  Indeed, every $\bZ/2$-action on $\bT^2$ with eigenvalues $\pm 1$ is conjugate to either
  \begin{equation*}
    \begin{pmatrix}
      -1&0\\
      \phantom{-}0&1
    \end{pmatrix}
    \quad\text{or}\quad
    \begin{pmatrix}
      -1&1\\
      \phantom{-}0&1
    \end{pmatrix}
  \end{equation*}
  \cite[Theorem 74.3]{cr_rep}. Either way, such a group fixes (pointwise) a direct summand $\bS^1< \bT^2$, whereas \Cref{eq:imphi} is not contained in a circle. 
\end{example}

\pf{th:cpctconn}
\begin{th:cpctconn}
  \begin{enumerate}[]
  \item {\bf \Cref{item:16} $\Longleftrightarrow$ \Cref{item:17}} This is basic structure theory: the center is a compact abelian Lie group and hence a torus $\bT^k$ times a finite group $F$ \cite[Corollary 4.2.6]{de}, so its dual isomorphic to $\bZ^d\cong \widehat{F}$. The rank $d$ of $\widehat{Z(H)}$, then, is the dimension of the largest central torus.

  \item {\bf \Cref{item:17} $\Longrightarrow$ \Cref{item:18}} Reprise the setup and notation of \Cref{eq:hprod}, in the proof of \Cref{pr:cpctoncent}. We can then conclude similarly: $\Aut(H)$ as a whole is compact if $Z_0$ is either trivial or a circle, as the current branch of the proof assumes.

  \item {\bf \Cref{item:18} $\Longrightarrow$ \Cref{item:16}} We saw in the proof of \Cref{pr:justzz1} that if $Z_0$ is at least 2-dimensional then $\Aut(H)$ contains infinite torsion-free subgroups (denoted there by $\widetilde{T}_d$).

    This settles the mutual equivalence of \Cref{item:16}, \Cref{item:17} and \Cref{item:18}.
    
  \item {\bf \Cref{item:18} $\Longrightarrow$ \Cref{item:20}} Obvious, leaving the mutual equivalence of \Cref{item:20}, \Cref{item:23} and \Cref{item:19}.
    
  \item {\bf \Cref{item:20} $\Longleftrightarrow$ \Cref{item:19}} This follows from \Cref{pr:cpctlie-lgstcpct} (which does not need connectedness).

  \item {\bf \Cref{item:23} $\Longrightarrow$ \Cref{item:20}} A consequence of \Cref{pr:justzz1}.

  \item {\bf \Cref{item:20} $\Longrightarrow$ \Cref{item:23}} This too will follow from \Cref{pr:justzz1}, once we show that the inversion automorphism $z\mapsto z^{-1}$ is in fact achievable by restricting an automorphism of $H$.

    To see this, recall the realization \Cref{eq:hprod} of $H$ as a quotient, with $\Delta$ the graph of a surjective morphism
    \begin{equation*}
      \Delta_S\xrightarrow[]{\quad\varphi\quad} Z_0\cap H'
    \end{equation*}
    defined on a central subgroup $\Delta_S$ of $S:=\prod_i S_i$. If there were an automorphism of $S$ operating as inversion on $Z(S)$, we would be done: $\varphi$ would intertwine it and inversion on $Z_0$, so everything in sight would descend to an automorphism of $H=H^*/\Delta$.

    Since $S$ is a product of simply-connected, simple compact Lie groups, it is enough to check that these have automorphisms (finite-order, in fact) acting as inversion on their centers. This can be done on a case-by-case basis, using the classification of such compact Lie groups (e.g. \cite[Chapter 3, table following Corollary 5]{stein_lec}, which also conveniently lists, in the second column, the centers of the respective groups):
    \begin{itemize}
    \item For types where the center is either trivial or an elementary abelian 2-group (i.e. a product of factors $\bZ/2$), the claim is self-evident. This is the case for $B_l$, $C_l$, $D_{2n}$, $E_7$, $E_8$, $F_4$ and $G_2$.
    \item For type $A_{l}$, $S_i$ would be the special unitary group $SU(l+1)$, with center $\bZ/(l+1)$. It does have an automorphism inverting the center, namely that sending a matrix to its complex conjugate (equivalently: to its inverse transpose).
    \item In type $D_{2n+1}$ the center is $\bZ/4$, and the compact group is the {\it spin group} $Spin_{4n+2}$. That it has an automorphism (necessarily outer) inverting its center is verified in \cite[discussion surrounding equation (8.20), pp.72-73]{bince_lie}.
      
    \item Finally, in type $E_6$ with center $\bZ/3$ an outer automorphism again inverts the center, for instance by \cite[Theorem 3.7.1]{yok_exc} (which computes the invariant subgroup of such an automorphism to be of type $F_4$, and hence center-less).
    \end{itemize}

  \end{enumerate}
  The proof is complete.
\end{th:cpctconn}

\subsection{Families of split embeddings}\label{subse:split}

There are (at least) two categories of interest on a group-theoretic multi-pushout $G:=\coprod_{H,\cat{Gp}}G_i$:
\begin{itemize}
\item the finest group topology making the maps $G_i\to G$ continuous, which we will refer to as the {\it final (group) topology} by analogy to, say \cite[\S I.2.4]{bourb_top-1-4};
\item the topology inherited from the map into the Bohr compactification: henceforth the {\it Bohr topology}.
\end{itemize}

\begin{remark}\label{re:topord}
  The two topologies mentioned above will generally not coincide. Suppose $H$ is trivial, for instance, and $G_i$ are metric with at least one infinite (equivalently: non-discrete) and at least one other non-trivial. \cite[Theorem 4.4]{ord_free-0} then shows that the final topology is strictly finer that the topology of \cite[I, \S 3]{ord_free-1-2}, constructed by means of pseudometrics topologizing the $G_i$. We will show below (\Cref{pr:taufiner}), in turn, that the latter is at least as fine as the Bohr topology.
\end{remark}

For a family of compact groups $G_i$, $i\in I$, we recall briefly the topology $\tau$ constructed in \cite[I, \S 3]{ord_free-1-2}.

\begin{construction}\label{con:tautop}
  First, the $G_i$ being compact, their topologies are induced by families of bi-invariant {\it pseudometrics} \cite[Chapter 6, Problem O]{kel_top}. Having chosen a pseudometric $d_i$ on each $G_i$, we can then equip the group-theoretic coproduct $\coprod_{\cat{Gp}}G_i$ with its own (again bi-invariant \cite[I, Lemma 2]{ord_free-1-2}) pseudometric defined uniquely, for a word
  \begin{equation*}
    g_1 g_2\cdots g_n\in \coprod_{\cat{Gp}}G_i,\quad 1\ne g_k\in G_{i_k},\quad i_k\ne i_{k+1},
  \end{equation*}
  by
  \begin{equation}\label{eq:dfromdi}
    d(g_1 g_2\cdots g_n,\ 1) = \inf_{(e_k)}\sum_{k=1}^n d_{i_k}(g_k,e_k),
  \end{equation}
  where the infimum runs over all tuples $e_k\in G_{i_k}$ with
  \begin{equation*}
    e_1 e_2\cdots e_k = 1 \in \coprod_{\cat{Gp}}G_i. 
  \end{equation*}
  The topology $\tau$ is the one induced by all of these pseudometrics $d$, as the $d_i$ vary.
\end{construction}

We can now justify the claim made in passing in \Cref{re:topord}:

\begin{proposition}\label{pr:taufiner}
  For any family of compact groups $G_i$, $i\in I$, the topology $\tau$ on the algebraic coproduct $G:=\coprod_{\cat{Gp}}G_i$ described above is at least as fine as the Bohr topology.
\end{proposition}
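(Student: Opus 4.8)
The plan is to show that every continuous homomorphism $G = \coprod_{\cat{Gp}}G_i \to K$ into a compact group $K$ is automatically continuous for the topology $\tau$; since $\tau$ is a group topology and the Bohr topology is by definition the coarsest making all such maps continuous, this gives $\tau \ge$ Bohr. So fix a continuous morphism $f\colon G\to K$. Because $K$ is compact, its topology is induced by a family of bi-invariant pseudometrics (\cite[Chapter 6, Problem O]{kel_top}); it therefore suffices to check that the pullback $\partial(x,y) := d_K(f(x),f(y))$ of each such bi-invariant pseudometric $d_K$ on $K$ is continuous for $\tau$, i.e. is dominated (near the identity, by bi-invariance it is enough to control $\partial(\cdot,1)$) by one of the generating pseudometrics $d$ of \Cref{con:tautop}.

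The key point is to choose the right pseudometrics $d_i$ on the factors $G_i$. Given $d_K$, restrict $f$ to each $G_i\hookrightarrow G$; the composite $G_i\to K$ is continuous, so $d_i(g,g') := d_K(f(g),f(g'))$ is a bi-invariant pseudometric on $G_i$ (bi-invariance is inherited from that of $d_K$ together with $f$ being a homomorphism). Let $d$ be the pseudometric on $G$ built from this tuple $(d_i)_{i\in I}$ via formula \Cref{eq:dfromdi}. I claim $\partial(g,1)\le d(g,1)$ for every $g\in G$, which by bi-invariance of both pseudometrics upgrades to $\partial(x,y)\le d(x,y)$ for all $x,y$, and hence $\partial$ is $\tau$-continuous. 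Granting the claim, running over all bi-invariant pseudometrics $d_K$ on $K$ shows $f$ is $\tau$-continuous, completing the argument.

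To prove the claimed inequality, write $g = g_1 g_2\cdots g_n$ in reduced form with $g_k\in G_{i_k}$, $i_k\ne i_{k+1}$, and let $(e_k)$ be any tuple with $e_k\in G_{i_k}$ and $e_1 e_2\cdots e_n = 1$ in $G$. Then, using that $f$ is a homomorphism, $f(g) = f(g_1)\cdots f(g_n)$ and $f(e_1)\cdots f(e_n) = 1$ in $K$, so by the triangle inequality together with bi-invariance of $d_K$ we get the telescoping estimate
\begin{equation*}
  d_K(f(g),1) = d_K\!\big(f(g_1)\cdots f(g_n),\ f(e_1)\cdots f(e_n)\big) \le \sum_{k=1}^n d_K\!\big(f(g_k),f(e_k)\big) = \sum_{k=1}^n d_{i_k}(g_k,e_k).
\end{equation*}
Taking the infimum over all admissible tuples $(e_k)$ yields $\partial(g,1) = d_K(f(g),1) \le d(g,1)$, as claimed. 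The main obstacle — really the only place any care is needed — is the bookkeeping in this telescoping step: one must insert the partial products correctly so that each application of the triangle inequality swaps a single $f(g_k)$ for $f(e_k)$ while the bi-invariance of $d_K$ absorbs the surrounding (already-matched or not-yet-matched) factors, and one must note that the resulting bound is independent of how the reduced word interacts with the factorization $e_1\cdots e_n = 1$ (the $e_k$ need not be in reduced form, but \Cref{eq:dfromdi} already accounts for that by the infimum).
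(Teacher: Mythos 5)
Your proposal is correct and follows essentially the same route as the paper: in both cases one pulls back the target's (bi-invariant) metric structure along the restrictions $f|_{G_i}$ to obtain the generating pseudometrics $d_i$, forms the induced $d$ via \Cref{eq:dfromdi}, and checks that $d$ dominates the pulled-back pseudometric on the coproduct. The only differences are cosmetic: the paper phrases the target as a finite-dimensional unitary representation and bounds the discrepancy of the two operator products by $\delta/(1-\delta)$ via a perturbation expansion, whereas your telescoping estimate using bi-invariance gives the sharper (and cleaner) $1$-Lipschitz bound $d_K(f(g),1)\le d(g,1)$ directly.
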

\begin{proof}
  We have to argue that if a net
  \begin{equation*}
    (g_{\alpha} = g_{\alpha,1} g_{\alpha,2} \cdots g_{\alpha,n_{\alpha}})_{\alpha} \subset G,\ g_{\alpha,k}\in G_{i_{\alpha,k}}
  \end{equation*}
  approaches $1$ with respect to every pseudometric $d$ on $G$ induced by individual pseudometrics $d_i$ on $G_i$, then it also approaches 1 in every finite-dimensional unitary representation of $G$.

  Consider, then, a unitary representation $\rho:G\to U(V)$ (unitary group on the finite-dimensional Hilbert space $V$), with its restrictions $\rho_i:=\rho|_{G_i}$. It induces pseudometrics $d_i$ on $G_i$, for instance by pulling back the usual metric of $U(V)$, induced by the norm on the finite-dimensional $C^*$-algebra $\End(V)$.
  
  In turn, the $d_i$ induce a pseudometric $d$ on $G$; I claim that $\rho$ is continuous for the pseudometric $d$. Indeed, consider
  \begin{equation*}
    g=g_1\cdots g_n,\quad g'=g'_1\cdots g'_n,\quad g_k,g'_k\in G_{i_k},\quad \sum_k d_{i_k}(g_k,g'_k)\text{ small}.
  \end{equation*}
  We can then write
  \begin{equation*}
    \rho(g'_k) = \rho(g_k)+\varepsilon_k T_k,\quad
    T_k\in \End(V)\text{ of norm }\le 1,\quad
    \sum_k \varepsilon_k\text{ small}. 
  \end{equation*}
  The two products we are comparing are
  \begin{equation*}
    \rho_1(g_1)\cdots \rho_n(g_n)
    \quad\text{and}\quad
    (\rho_1(g_1)+\varepsilon_1 T_1)\cdots (\rho_n(g_n)+\varepsilon_n T_n),
  \end{equation*}
  and the conclusion follows that the difference is dominated in norm by
  \begin{equation*}
    \delta+\delta^2+\delta^3+\cdots = \frac{\delta}{1-\delta},\quad
    \delta:=\sum_{k=1}^n \varepsilon_k. 
  \end{equation*} 
\end{proof}

\begin{remark}\label{re:taustronger}
  In general, the topology $\tau$ of \Cref{con:tautop} will be {\it strictly} finer than the Bohr topology. Indeed, if the $G_i$ are finite, then $\tau$ will be discrete. On the other hand, assuming at least two $G_i$ are non-trivial, the coproduct $G:=\coprod_{\cat{Gp}}G_i$ cannot be discrete in its Bohr topology:

  There is some infinite-order element $g\in G$ (e.g. the product of two non-trivial elements in two of the $G_i$: \cite[Theorem 11.69]{rot-gp}), and the sequence $(g^n)_n$, homeomorphic to $\bZ$ in the $\tau$ topology, will have a Bohr-convergent subsequence $(g^{n_k})_k$ \cite[Theorem 17.4]{wil-top} (to some element of its Bohr compactification), and hence also a subsequence $(g^{n_{k+1}-n_k})_k$ Bohr-convergent to 1.
\end{remark}

Consider a compact group $H$, acting continuously on compact groups $H_i$. It is a natural question (and one related to \Cref{res:hulan} \Cref{item:9} above, as will soon become apparent), whether the resulting action
\begin{equation*}
  H\times \coprod_{\cat{Gp}}H_i\to \coprod_{\cat{Gp}}H_i
\end{equation*}
is continuous for the Bohr topology; this is not the case, in general:

\begin{example}\label{ex:splitnogood}
  Let $H$ be the product $\prod\bZ/p$ over all primes $p$, and equip, for every $p$, the $p$-dimensional torus $H_p:=\bT^p$ with the action of $H$ whereby the index-$p$ factor acts on the Pontryagin dual
  \begin{equation}\label{eq:zptp}
    \bZ^p\cong \widehat{\bT^p}=\widehat{H_p}
  \end{equation}
  via the permutation matrix corresponding to a $p$-cycle and the other factors act trivially.

  Now, suppose $H$ {\it does} act continuously on the Bohr-topologized coproduct $F=\coprod_{\cat{Gp}}H_p$, and hence also on the compact-group coproduct $\coprod_{\cat{CGp}} H_p$. Every unitary representation of $F$ would extend to the pushout \Cref{eq:pushoverh} by \Cref{pr:iffactcont} below.

  To reach a contradiction, consider a character $\chi$ of $\coprod_{\cat{CGp}} H_p$ which restricts, on every $H_p$, to a character whose orbit under $H$ has full size $p$. Naturally, {\it this} representation will not extend to a finite-dimensional one on all of \Cref{eq:pushoverh}: such a representation would have to contain all $p$ of the $H$-iterates of every $\chi|_{H_p}$, for every $p$.
\end{example}

As for the relevance of \Cref{ex:splitnogood} (and the question it answers negatively) to \Cref{res:hulan} \Cref{item:9}, it stems from the following result.

\begin{proposition}\label{pr:iffactcont}
  Let $H$ be a compact group acting continuously on compact groups $H_i$, $i\in I$. The following conditions are equivalent:

  \begin{enumerate}[(a)]

  \item\label{item:12} The resulting action of $H$ on the algebraic coproduct $\coprod_{\cat{Gp}}H_i$ is continuous for some group topology intermediate between the final and Bohr topologies.

  \item\label{item:13} We have a decomposition
    \begin{equation}\label{eq:pushoverh}
      \coprod_{H,\cat{CGp}} (H_i\rtimes H)\cong \left(\coprod_{\cat{CGp}}H_i\right)\rtimes H.  
    \end{equation}
    via the obvious maps.

  \item\label{item:8} The action of $H$ on the algebraic coproduct $\coprod_{\cat{Gp}}H_i$ is continuous for the Bohr topology.

  \end{enumerate}
  Moreover, they imply the algebraic soundness of the family of embeddings $H\le H_i\rtimes H$.

\end{proposition}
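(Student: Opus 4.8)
\textbf{Proof plan for \Cref{pr:iffactcont}.}

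The plan is to establish the cycle of implications $\Cref{item:8}\Rightarrow\Cref{item:13}\Rightarrow\Cref{item:12}\Rightarrow\Cref{item:8}$, with the last arrow carrying essentially all of the content; then deduce algebraic soundness as a corollary of \Cref{item:13}. The implication $\Cref{item:8}\Rightarrow\Cref{item:12}$ is trivial (the Bohr topology is one admissible choice), so the real work lies in $\Cref{item:12}\Rightarrow\Cref{item:8}$ and in extracting the semidirect-product decomposition.

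For $\Cref{item:8}\Rightarrow\Cref{item:13}$: if $H$ acts continuously on the Bohr-topologized coproduct $F:=\coprod_{\cat{Gp}}H_i$, then it acts continuously on the associated compact group $\coprod_{\cat{CGp}}H_i$ (by the universal property: the composite $F\to\coprod_{\cat{CGp}}H_i$ is $H$-equivariant into a compact group, so for each $h\in H$ conjugation extends, and continuity in $h$ is inherited). Forming the semidirect product $\left(\coprod_{\cat{CGp}}H_i\right)\rtimes H$ gives a compact group equipped with compatible maps from each $H_i\rtimes H$ agreeing on $H$, hence a canonical map from $\coprod_{H,\cat{CGp}}(H_i\rtimes H)$. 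Conversely the structure maps $H_i\rtimes H\to\coprod_{H,\cat{CGp}}(H_i\rtimes H)$ all restrict to the same copy of $H$ and the $H_i$-parts together with that copy of $H$ generate, so one builds the inverse; checking the two composites are identities is a routine universal-property chase. The reverse implication $\Cref{item:13}\Rightarrow\Cref{item:12}$ is similar: from the decomposition \Cref{eq:pushoverh} the conjugation action of the $H$-factor on $\coprod_{\cat{CGp}}H_i$ pulls back, via the structure map $F\to\coprod_{\cat{CGp}}H_i$, to a continuous action of $H$ on $F$ for the Bohr topology, which is certainly intermediate between final and Bohr.

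The hard step is $\Cref{item:12}\Rightarrow\Cref{item:8}$: upgrading continuity of the $H$-action from an unspecified intermediate topology to the (a priori coarsest relevant) Bohr topology. The natural route is to use \Cref{pr:taufiner}: the $\tau$-topology of \Cref{con:tautop} refines the Bohr topology, so it suffices to know the action is $\tau$-continuous — but that is not what is assumed either, since the intermediate topology need only sit below the final topology, not below $\tau$. Instead I would argue representation-theoretically. Continuity of the $H$-action for \emph{some} intermediate topology $\sigma$ with $\text{Bohr}\le\sigma\le\text{final}$ means that for each finite-dimensional unitary representation $\rho$ of $F$ (which factors through the Bohr compactification, hence is $\sigma$-continuous), and each $h\in H$, the conjugate ${}^h\rho$ is again $\sigma$-continuous; moreover $h\mapsto{}^h\rho$ is continuous into the space of such representations. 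The key point is then that conjugating $\rho$ by $H$ keeps it finite-dimensional of the \emph{same} dimension, so one can form $\bigoplus_{\text{orbit}}$ only when the orbit is finite — and here is where one must show the $H$-orbit of each $\widehat{H_i}$-component entering $\rho|_{H_i}$ is in fact finite (this is exactly the phenomenon that fails in \Cref{ex:splitnogood}). Under the continuity hypothesis the stabilizer in $H$ of any such component is open (by continuity of $h\mapsto{}^h\rho$ and discreteness of $\widehat{H_i}$), hence of finite index since $H$ is compact, so the orbit is finite; one then averages/induces to realize ${}^h\rho$ inside a representation of $\coprod_{\cat{CGp}}H_i$ that is $H$-stable, producing enough $H$-equivariant finite-dimensional representations to separate points of $F$ and conclude Bohr-continuity of the action.

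Finally, for the last clause: given \Cref{eq:pushoverh}, the algebraic coproduct $\coprod_{\cat{Gp}}(H_i\rtimes H)$ surjects onto $\coprod_{\cat{Gp}}H_i\rtimes H$ (algebraically), which in turn maps to $\left(\coprod_{\cat{CGp}}H_i\right)\rtimes H\cong\coprod_{H,\cat{CGp}}(H_i\rtimes H)$; by \cite[Proposition 1]{hul_map} the algebraic coproduct $\coprod_{\cat{Gp}}H_i$ embeds in $\coprod_{\cat{CGp}}H_i$, and taking $\rtimes H$ preserves injectivity, so the composite $\coprod_{\cat{Gp}}(H_i\rtimes H)\to\coprod_{H,\cat{Gp}}(H_i\rtimes H)\to\coprod_{H,\cat{CGp}}(H_i\rtimes H)$ is injective on the kernel of the map to $H$, while elements surviving to $H$ are harmless since $H$ is compact — exactly the two-case split used in the proof of \Cref{th:normasound}. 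Hence the family $H\le H_i\rtimes H$ is algebraically sound. The main obstacle, as indicated, is the orbit-finiteness argument in $\Cref{item:12}\Rightarrow\Cref{item:8}$; everything else is formal manipulation of universal properties.
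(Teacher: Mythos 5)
Your cycle runs $\Cref{item:8}\Rightarrow\Cref{item:13}\Rightarrow\Cref{item:12}\Rightarrow\Cref{item:8}$ and places all the weight on $\Cref{item:12}\Rightarrow\Cref{item:8}$, and it is precisely there that the argument breaks down. You propose to show that the relevant $H$-orbits attached to a finite-dimensional unitary representation $\rho$ of $F=\coprod_{\cat{Gp}}H_i$ are finite because ``the stabilizer in $H$ of any $\widehat{H_i}$-component of $\rho|_{H_i}$ is open, hence of finite index''. But the finiteness of the $H$-orbit of each individual $\widehat{H_i}$-component is automatic from the standing hypothesis that the compact group $H$ acts continuously on the compact group $H_i$ (such an action factors through a compact subgroup of $\Aut(H_i)$, which has finite orbits on the discrete set $\widehat{H_i}$ --- the paper itself uses this in the proof of \Cref{cor:fincpctorb}), and it holds in \Cref{ex:splitnogood}, where the conclusion of the proposition fails: there every orbit is finite, of size $p$. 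So, contrary to what you assert, componentwise orbit-finiteness is \emph{not} the phenomenon that fails in that example. What you actually need in order to form the $H$-stable finite-dimensional representation $\bigoplus_{\mathrm{orbit}}{}^h\rho$ is finiteness of the orbit of $\rho$ itself, i.e.\ openness of $\mathrm{Stab}_H(\rho)=\bigcap_i\mathrm{Stab}_H\bigl([\rho|_{H_i}]\bigr)$ --- an intersection of infinitely many open subgroups, which need not be open (in \Cref{ex:splitnogood} it is trivial). Your argument never explains how hypothesis \Cref{item:12} forces this intersection to have finite index, and that is exactly where all the content of the implication lies; as written, the argument would apply verbatim to \Cref{ex:splitnogood} and prove something false.

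The paper avoids this difficulty entirely by running the cycle the other way. The hard arrow is $\Cref{item:12}\Rightarrow\Cref{item:13}$, done by a universal-property computation: if the $H$-action on $(\coprod_{\cat{Gp}}H_i,\sigma)$ is continuous for an intermediate group topology $\sigma$, then the topological semidirect product $(\coprod_{\cat{Gp}}H_i,\sigma)\rtimes H\cong\coprod_{H,\cat{Gp}}(H_i\rtimes H)$ has the same continuous morphisms to compact groups as both sides of \Cref{eq:pushoverh}, so its Bohr compactification is simultaneously both of them. Then $\Cref{item:13}\Rightarrow\Cref{item:8}$ is immediate, because the Bohr topology on $\coprod_{\cat{Gp}}H_i$ is induced from the compact group $\coprod_{\cat{CGp}}H_i$, on which $H$ visibly acts continuously (conjugation inside the compact group \Cref{eq:pushoverh}); and $\Cref{item:8}\Rightarrow\Cref{item:12}$ is trivial. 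Your treatment of the soundness clause is essentially the paper's (\Cref{eq:pushoverh} plus Hulanicki), and your $\Cref{item:8}\Rightarrow\Cref{item:13}$ is plausible modulo a density/extension argument, but without a correct proof of $\Cref{item:12}\Rightarrow\Cref{item:8}$ (or of $\Cref{item:12}\Rightarrow\Cref{item:13}$) the equivalence with \Cref{item:12} is not established.
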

\begin{proof}
  We have a decomposition
  \begin{equation*}
    \coprod_{\cat{Gp}}(H_i\rtimes H)\cong \left(\coprod_{\cat{Gp}} H_i\right)\rtimes H.
  \end{equation*}
  If the action in the rightmost term were continuous for some ``intermediate topology'', as in \Cref{item:12}, \Cref{eq:pushoverh} would follow from the respective universality properties of the two sides. This proves that \Cref{item:12} implies \Cref{item:13}, with
  \begin{equation*}
    \text{\Cref{item:13}}
    \Longrightarrow
    \text{\Cref{item:8}}
    \Longrightarrow
    \text{\Cref{item:12}}
  \end{equation*}
  being straightforward.
  
  As for the last statement (regarding algebraic soundness), it is an immediate consequence of \Cref{eq:pushoverh} and the already-cited \cite[Proposition 1]{hul_map} of Hulanicki's to the effect that families of trivial embeddings in \cat{CGp} are algebraically sound.
\end{proof}

All of this, moreover, happens frequently enough to be of some use:

\begin{theorem}\label{th:whenextcont}
  The equivalent conditions of \Cref{pr:iffactcont} hold either
  \begin{enumerate}[(a)]
  \item\label{item:21} when $H$ is finite and $(H_i)$ is an arbitrary family of compact groups equipped with $H$-actions;
  \item\label{item:22} or when $H$ is arbitrary and $(H_i)$ is finite.
  \end{enumerate}  
\end{theorem}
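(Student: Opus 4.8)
The plan is to verify the equivalent conditions of \Cref{pr:iffactcont}, and the most convenient of these to establish directly is the continuity of the $H$-action on the Bohr-topologized coproduct $\coprod_{\cat{Gp}}H_i$ (condition \Cref{item:8} of that proposition). Equivalently, by \Cref{pr:taufiner}, it suffices to show that the action is continuous for the topology $\tau$ of \Cref{con:tautop}, since $\tau$ refines the Bohr topology and the quotient action on the Bohr compactification is then automatically continuous. So the target in both cases becomes: for every continuous choice of bi-invariant pseudometrics $d_i$ on the $H_i$ and every $h\in H$, the map $g\mapsto h\cdot g$ is $\tau$-continuous, and moreover jointly continuous in $(h,g)$.

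\emph{Case \Cref{item:22} ($(H_i)$ finite).} Here I would argue more directly, bypassing $\tau$: with only finitely many $H_i$, the compact-group coproduct $\coprod_{\cat{CGp}}H_i$ is an honest compact group, and the point is to see that $H$ acts on it continuously. The $H$-action permutes/transforms the finite-dimensional unitary representations of $\coprod_{\cat{CGp}}H_i$, and since there are finitely many factors, every such representation restricts to each $H_i$ as a genuine representation; the $H$-action on $H_i$ is continuous, so the induced action on $\widehat{H_i}$ has finite orbits (a compact group acting on another compact group by automorphisms moves a given irreducible only through finitely many isomorphism classes — this is exactly the fact used in \Cref{cor:fincpctorb}). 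One then assembles, for each irreducible $\pi$ of $\coprod_{\cat{CGp}}H_i$, the finite sum over its $H$-orbit, obtains an $H$-stable finite-dimensional representation, and checks continuity of the $H$-action on that finite-dimensional piece (which reduces to continuity of $H\times H_i\to H_i$ composed with a fixed representation). Passing to the product over all $\pi$ realizes $H$ acting continuously by automorphisms of $\coprod_{\cat{CGp}}H_i$, which gives \Cref{item:13} and hence the claim.

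\emph{Case \Cref{item:21} ($H$ finite).} When $H$ is finite the $\tau$-continuity is essentially formal: a finite group acts on $\coprod_{\cat{Gp}}H_i$ by permuting and transporting the factors $H_i$ according to continuous automorphisms $h:H_{i}\to H_{i}$, so for a fixed choice of pseudometrics $d_i$ one can replace each $d_i$ by the ($H$-averaged, still bi-invariant) pseudometric $\overline{d_i}(x,y):=\max_{h\in H} d_i(h\cdot x,\, h\cdot y)$; every $h\in H$ is then an isometry of $(H_i,\overline{d_i})$, hence by the defining formula \Cref{eq:dfromdi} an isometry of $(\coprod_{\cat{Gp}}H_i,\overline d)$, where $\overline d$ is the pseudometric built from the $\overline{d_i}$. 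Since the family of pseudometrics $\{\overline d\}$ obtained this way is cofinal among all pseudometrics defining $\tau$ (each $\overline{d_i}$ still induces the topology of $H_i$, as $H$ is finite and each $h$ a homeomorphism), the $H$-action is $\tau$-continuous; joint continuity in $(h,g)$ is free because $H$ is discrete. Then \Cref{pr:taufiner} and \Cref{pr:iffactcont} finish it.

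The main obstacle I anticipate is the finite-orbit/boundedness bookkeeping in Case \Cref{item:22}: one must be careful that the representations of the individual $H_i$ appearing in a given irreducible of the coproduct, together with their $H$-translates, can be organized into a genuinely $H$-stable \emph{finite-dimensional} representation with a uniform-in-everything bound — this is precisely the kind of multiplicity/class-finiteness that fails for infinite families (cf.\ \Cref{ex:splitnogood}) and works here only because $|I|<\infty$ forces every coproduct-representation to restrict to each $H_i$. Everything else (the pseudometric averaging, the appeals to \Cref{pr:taufiner} and \Cref{pr:iffactcont}) is routine.
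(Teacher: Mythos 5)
Your treatment of case \Cref{item:21} is correct, though heavier than necessary: the paper simply observes that for discrete $H$ joint continuity reduces to continuity of each individual automorphism $\alpha_h$, which holds already for the final topology, so condition \Cref{item:12} of \Cref{pr:iffactcont} is immediate; your max-averaged pseudometrics $\overline{d_i}$ do also yield $\tau$-continuity, and the cofinality claim is fine since $\overline{d_i}\ge d_i$. One caveat on your framing: $\tau$-continuity does \emph{not} ``automatically'' give continuity for the Bohr topology (a finer topology on the source makes continuity easier, not harder); what it gives directly is condition \Cref{item:12}, which is all the paper ever verifies before handing off to \Cref{pr:iffactcont}.

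Case \Cref{item:22} is where the proposal breaks, at exactly the step you flagged as the anticipated obstacle. The claim that each irreducible $\pi$ of $\coprod_{\cat{CGp}}H_i$ has a finite $H$-orbit does not follow from finiteness of the $H$-orbits on each $\widehat{H_i}$: an irreducible of the coproduct is a \emph{simultaneous} unitary-conjugacy class of a tuple of representations of the $H_i$ on one space, and the relative position of the tuple can move continuously under $H$ even when each coordinate stays in its own finite orbit. Concretely, take $H_1=H_2=SU(2)$ with $H=SO(3)$ acting on both by conjugation; the irreducibles of $SU(2)\coprod_{\cat{CGp}}SU(2)$ given by the standard representation on the first factor and its conjugate by a unitary $w$ on the second are classified by $[w]\in PU(2)$, and $H$ translates $[w]$ by conjugation, so generic orbits are $2$-spheres. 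Hence the ``finite sum over the $H$-orbit of $\pi$'' you want to assemble does not exist, and indeed the $H$-action on the compact group $\coprod_{\cat{CGp}}H_i$ need not be continuous at all, so \Cref{item:13} cannot be attacked head-on this way. The paper's proof of \Cref{item:22} avoids the compact coproduct entirely: it only establishes continuity of the $H$-action on the \emph{algebraic} coproduct for the pseudometric topology $\tau$ of \Cref{con:tautop}, the point being that finiteness of the index set lets a single identity-neighborhood $U\subseteq H$ control $d_i(gx,x)<\varepsilon$ uniformly over all $i$ and all $x\in H_i$ simultaneously; this verifies condition \Cref{item:12} (with $\tau$ as the intermediate topology, legitimate by \Cref{pr:taufiner}), and the remaining work is delegated to \Cref{pr:iffactcont}. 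You should either adopt that route or find a genuinely different argument; the representation-theoretic one as written cannot be repaired.
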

\begin{proof}
  Part \Cref{item:21} is clear: actions of discrete groups are always continuous. As for \Cref{item:22}, I claim that for finite families the $H$-action on the group-theoretic coproduct $\coprod_{\cat{Gp}}H_i$ is continuous for the topology $\tau$ of \Cref{con:tautop} (whence the conclusion, by \Cref{pr:taufiner}).

  Indeed, for every choice of respective pseudometrics $d_i$ on $H_i$, $H$ acts on the $H_i$ {\it uniformly} $d_i$-continuously, because the family is finite: for any $\varepsilon>0$, there is a neighborhood $U\ni 1\in H$ such that $d_i(gx,x)<\varepsilon$ whenever $g\in U$ and $x\in H_i$. The point here is that a single $U$ works for all $i$. But then the $H$-action on $\coprod_{\cat{Gp}}H_i$ is continuous for the induced pseudometric $d$ on that coproduct, as is easily seen from the definition \Cref{eq:dfromdi} of $d$.
\end{proof}

\begin{theorem}\label{th:splitasnd}
  Any family of split embeddings $\iota_i:H\le G_i$ of compact groups is algebraically sound.
\end{theorem}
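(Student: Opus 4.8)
The strategy is a formal bootstrap from the finite case. By definition a split embedding is one of the form $H\le G_i:=H_i\rtimes H$ coming from a continuous action $H\times H_i\to H_i$, so what must be shown is that an arbitrary family $\iota_i:H\le H_i\rtimes H$, $i\in I$, is algebraically sound. For a \emph{finite} index set this is already available: \Cref{th:whenextcont} \Cref{item:22} supplies the equivalent conditions of \Cref{pr:iffactcont}, whose closing assertion is precisely that the family $H\le H_i\rtimes H$ is then algebraically sound. The remaining task is to pass from finite subfamilies to the whole family, and \Cref{le:famsubfam} \Cref{item:subfam-fam} is tailored to exactly this.

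Accordingly I would let $J$ range over the finite subsets of $I$, ordered by inclusion: this is a filtered poset, $\bigcup_J J=I$, and each subfamily $(\iota_j)_{j\in J}$ is algebraically sound by the previous paragraph. The only hypothesis of \Cref{le:famsubfam} \Cref{item:subfam-fam} still to verify is that, for every finite $J\subseteq I$, the comparison morphism
\[
  \coprod_{H,\cat{Gp}}(H_j\rtimes H)\ \longrightarrow\ \coprod_{H,\cat{Gp}}(H_i\rtimes H)
  \qquad(j\in J,\ i\in I)
\]
of group-theoretic multi-pushouts splits; granting this, \Cref{le:famsubfam} \Cref{item:subfam-fam} gives the theorem.

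To obtain the splitting I would construct a left inverse directly via the universal property of the amalgamated free product: a homomorphism out of $\coprod_{H,\cat{Gp},\,i\in I}(H_i\rtimes H)$ is the same as a family of homomorphisms $f_i:H_i\rtimes H\to\coprod_{H,\cat{Gp},\,j\in J}(H_j\rtimes H)$ that agree on $H$. For $i\in J$ take $f_i$ to be the corresponding structure map; for $i\notin J$ take $f_i$ to be the quotient homomorphism $H_i\rtimes H\twoheadrightarrow H$ (available precisely because the embedding $H\le H_i\rtimes H$ is split) followed by the canonical amalgam map $H\to\coprod_{H,\cat{Gp},\,j\in J}(H_j\rtimes H)$. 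Every $f_i$ restricts on $H$ to that canonical map, so the family is compatible and defines a homomorphism $r$; a check on structure maps shows at once that $r$ is a left inverse of the comparison morphism. This verifies the hypothesis of \Cref{le:famsubfam} \Cref{item:subfam-fam}, completing the proof.

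There is no serious obstacle: the genuine content sits in \Cref{th:whenextcont}, \Cref{pr:iffactcont} and \Cref{le:famsubfam}, and the present statement is a short deduction. The two points worth keeping in mind are that it is the \emph{group-theoretic} (not compact-group) pushout maps that must be split --- which is exactly what \Cref{le:famsubfam} \Cref{item:subfam-fam} asks for and what the construction above provides --- and that the quotients $H_i\rtimes H\twoheadrightarrow H$, hence the whole argument, rely on the embeddings being split rather than merely normal.
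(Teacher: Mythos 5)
Your proof is correct and follows essentially the same route as the paper's: algebraic soundness of finite subfamilies via \Cref{th:whenextcont} and \Cref{pr:iffactcont}, then passage to the full family by \Cref{le:famsubfam}~\Cref{item:subfam-fam}, with the required splitting of the group-theoretic pushout maps given by exactly the retraction you describe (identity on the $J$-indexed factors and on $H$, the projection $H_i\rtimes H\twoheadrightarrow H$ on the remaining factors). Your write-up spells out the universal-property verification slightly more explicitly than the paper does, but there is no substantive difference.
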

\begin{proof}
  The splittings provide decompositions
  \begin{equation*}
    G_i\cong H_i\rtimes H.
  \end{equation*}
  Now, {\it finite} subfamilies of $(\iota_i)$ meet the requirements of \Cref{pr:iffactcont} by \Cref{th:whenextcont}, and we can conclude by \Cref{le:famsubfam} \Cref{item:subfam-fam} once we observe that {\it its} hypothesis holds for split embeddings: for a subfamily $(\iota_j)_{j\in J}\subseteq (\iota_i)_{i\in I}$ the map
  \begin{equation*}
    \coprod_{H,\cat{Gp}}G_j = \coprod_{H,\cat{Gp}}H_j\rtimes H
    \to
    \coprod_{H,\cat{Gp}}H_i\rtimes H = \coprod_{H,\cat{Gp}}G_i
  \end{equation*}
  has a left inverse, acting as the identity on each $H_j$ and on the common embedded copy $H$ and as the trivial morphism on $H_i$, $i\not\in J$. 
\end{proof}

\begin{remark}
  The earlier \Cref{ex:notemb} shows that in general, a diagram
  \begin{equation*}
    \begin{tikzpicture}[auto,baseline=(current  bounding  box.center)]
      \path[anchor=base] 
      (0,0) node (l) {$H$}
      +(2,.5) node (u) {$H_0$}
      +(2,-.5) node (d) {$H_1$}
      +(4,.5) node (ur) {$G_0$}
      +(4,-.5) node (dr) {$G_1$}
      ;
      \draw[->] (l) to[bend left=6] node[pos=.5,auto] {$\scriptstyle $} (u);
      \draw[->] (l) to[bend right=6] node[pos=.5,auto,swap] {$\scriptstyle $} (d);
      \draw[->] (u) to[bend left=6] node[pos=.5,auto] {$\scriptstyle $} (ur);
      \draw[->] (d) to[bend right=6] node[pos=.5,auto,swap] {$\scriptstyle $} (dr);
    \end{tikzpicture}
  \end{equation*}
  of embeddings in \cat{CGp} does not produce an embedding $H_0\coprod_H H_1\to G_0\coprod_H G_1$: simply take $H_0=H_1=H=\bT^2$ and $G_i$ as in \Cref{ex:notemb}.

  On the other hand, even for plain coproducts (i.e. multi-pushouts over the trivial group), \Cref{ex:splitnogood} can be leveraged to produce an instance of this misbehavior: see \Cref{ex:coprodnotemb}. Compare once more with \cite[Theorem 4.2]{ped-aut}, which implies that coproducts of $C^*$-algebra embeddings are embeddings (and of course, the familiar pure group-theoretic analogue, which follows from \cite[Theorem 11.66]{rot-gp}, say).
\end{remark}

\begin{example}\label{ex:coprodnotemb}
  For primes $p$ let $H_p=\bT^p$, the $p$-torus, as in \Cref{ex:splitnogood}, and consider embeddings
  \begin{equation*}
    H_p\subset G_p:=H_p\rtimes (\bZ/p),
  \end{equation*}
  with the action of $\bZ/p$ once more as in \Cref{ex:splitnogood} (on the Pontryagin dual \Cref{eq:zptp} by an order-$p$-cycle permutation matrix). The argument for the non-injectivity of
  \begin{equation*}
    \coprod_{\cat{CGp}}H_p\to \coprod_{\cat{CGp}} G_p
  \end{equation*}
  is exactly the same as in \Cref{ex:splitnogood}: a character $\chi$ of the left-hand side that restricts to characters $\chi|_{H_p}$ with size-$p$ orbits under the respective actions of $\bZ/p$ cannot be a restriction of a finite-dimensional unitary representation of $\coprod_{\cat{CGp}} G_p$, because such a representation would absurdly have to contain the entire $(\bZ/p)$-orbit of each $\chi|_{H_p}$. 
\end{example}

The infinitude of the family of embeddings $H_p\le G_p$ is crucial in \Cref{ex:coprodnotemb}:

\begin{proposition}\label{pr:coprodembcgp}
  The coproduct in \cat{CGp} of a finite family of embeddings $H_i\le G_i$ is an embedding.
\end{proposition}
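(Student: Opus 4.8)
The plan is to argue at the level of finite-dimensional unitary representations, in the spirit of Hulanicki's theorem \cite[Proposition 1]{hul_map}. Write $A:=\coprod_{\cat{CGp}}H_i$ and $B:=\coprod_{\cat{CGp}}G_i$; these are the Bohr compactifications of the group-theoretic coproducts $\coprod_{\cat{Gp}}H_i\le\coprod_{\cat{Gp}}G_i$ (the inclusion by \cite[Theorem 11.66]{rot-gp}), both of which sit densely in their compact-group counterparts. Since $A$ and $B$ are compact, the canonical morphism $f:A\to B$ is an embedding as soon as $\ker f$ is trivial. So fix $1\ne x\in A$; by Peter--Weyl there is a finite-dimensional continuous unitary representation $\rho:A\to U(V)$ with $\rho(x)\ne\id$, and it suffices to produce a continuous representation $\widetilde\sigma:B\to U(U)$ together with an injective intertwiner $T:V\to U$ between $\rho$ and $\widetilde\sigma\circ f$: then $f(x)=1$ would force $\rho(x)=\id$.

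To build $\widetilde\sigma$, restrict $\rho$ along the maps $H_i\to A$ to get continuous representations $\rho_i$ of $H_i$; the restriction of $\rho$ to the dense subgroup $\coprod_{\cat{Gp}}H_i$ is then the free-product representation $\overline\rho$ of the family $(\rho_i)$. For each $i$, the Frobenius-reciprocity argument already used in the proof of \Cref{th:cpcteq} (for instance, realise $\rho_i$ inside $\mathrm{Ind}_{H_i}^{G_i}\rho_i$ and decompose via Peter--Weyl) yields a finite-dimensional continuous unitary representation $\tau_i:G_i\to U(W_i)$ and an isometric $H_i$-equivariant embedding $\iota_i:V\hookrightarrow W_i$; write $W_i=\iota_i(V)\oplus V_i^\perp$ for the resulting orthogonal, $H_i$-invariant decomposition. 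Now amalgamate the $W_i$ along their common copy of $V$: set
\begin{equation*}
  U:=V\oplus\bigoplus_{i}V_i^\perp,
\end{equation*}
and, using the identification $V\oplus V_i^\perp\cong\iota_i(V)\oplus V_i^\perp=W_i$, let $\sigma_i:G_i\to U(U)$ act through $\tau_i$ on that $W_i$-summand and trivially on the remaining $V_j^\perp$, $j\ne i$. Each $\sigma_i$ is a continuous unitary representation of $G_i$, so the family assembles into a continuous representation $\sigma$ of $\coprod_{\cat{Gp}}G_i$, which extends uniquely to a continuous $\widetilde\sigma:B\to U(U)$.

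The inclusion $T:V\hookrightarrow U$ of the first summand intertwines $\rho_i$ with $\sigma_i|_{H_i}$ for every $i$, since for $h\in H_i$ the operator $\tau_i(h)$ preserves $\iota_i(V)$ and acts there as $\rho_i(h)$. Hence $T$ intertwines $\overline\rho$ with $\sigma|_{\coprod_{\cat{Gp}}H_i}$, which equals $(\widetilde\sigma\circ f)|_{\coprod_{\cat{Gp}}H_i}$ by the construction of $f$. As $\rho$ and $\widetilde\sigma\circ f$ are both continuous on $A$ and the identity $T\rho(\cdot)=\widetilde\sigma(f(\cdot))\,T$ holds on the dense subgroup $\coprod_{\cat{Gp}}H_i$, it holds on all of $A$; since $T$ is injective this gives $\ker f=\{1\}$, and a continuous injection between compact groups is an embedding.

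The only real obstacle is the amalgamation step: one cannot simply take $\sigma=\bigoplus_i\tau_i$ acting on $\bigoplus_i W_i$, because an intertwiner $V\to\bigoplus_i W_i$ for the free-product representations would be forced, in each coordinate $j$, to factor through the $H_i$-coinvariants of $V$ for all $i\ne j$, which is typically far too small. Gluing the $W_i$ over a single shared copy of $V$ is precisely what lets $\overline\rho$ sit inside the restriction of the free-product representation of the $\sigma_i$, and this construction is finite-dimensional only because the family is finite --- in accordance with \Cref{ex:coprodnotemb}, where the statement fails for an infinite family. (One may alternatively induct on the number of factors, performing the amalgamation on two spaces at a time.)
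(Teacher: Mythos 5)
Your proof is correct and is essentially the paper's argument: both reduce the problem to showing that every finite-dimensional unitary representation of $\coprod_{\cat{CGp}}H_i$ is contained in the restriction of one of $\coprod_{\cat{CGp}}G_i$, and both achieve this by extending the $H_i$-representation to $G_i$ on a larger space while letting the other factors act trivially on the newly added orthogonal complement. The only difference is organizational: the paper enlarges one factor at a time (using induction and symmetry to reduce to $H\coprod K\le G\coprod K$), whereas you amalgamate all the enlarged spaces $W_i$ over the shared copy of $V$ in a single step --- your own parenthetical remark notes the two are interchangeable.
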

\begin{proof}
  Induction reduces this to a pair of embeddings, say $H_i\le G_i$ for $i=0,1$, and since we can apply the argument symmetrically, once for each of the two indices, it is enough to argue that
  \begin{equation*}
    H\le G\Longrightarrow H\coprod_{\cat{CGp}}K\le G\coprod_{\cat{CGp}}K
  \end{equation*}
  for compact groups $H$, $G$ and $K$.

  Because the Bohr topology on the group-theoretic coproduct $H\coprod_{\cat{Gp}}K$ is induced by that group's finite-dimensional unitary representations, it is enough to argue that any such representation, say $\rho:H\coprod_{\cat{Gp}}K\to U(V)$ (unitary group on a finite-dimensional complex Hilbert space), extends to a unitary representation of $G\coprod_{\cat{Gp}}K$ on a possibly larger space.

  $H\le G$ being an embedding of compact groups, we can certainly extend $\rho|_{H}$ in such a fashion, to $\rho':G\to U(W)$. Now let $K$ act as before (via $\rho$) on $V$ and trivially on its orthogonal complement $V^{\perp}\le W$ to complete the extension of $\rho$ to all of $G\coprod_{\cat{Gp}}K$.
\end{proof}

\begin{remark}\label{re:filteremb}
  \cite[Theorem 4.2]{ped-aut}, mentioned in the discussion preceding \Cref{pr:coprodembcgp} also only handles {\it finite} pushouts of $C^*$-algebras. The reason why this is enough, in the category $\cC^*_1$ of unital $C^*$-algebras, to ensure the injectivity of {\it arbitrary} coproducts of embeddings is that in $\cC^*_1$ the structure maps
  \begin{equation*}
    A_j\to \varinjlim_{i}A_i
  \end{equation*}
  into a filtered colimit of embeddings are again embeddings. This is manifestly not the case in \cat{CGp}: consider the chain
  \begin{equation*}
    SU(2)\subset SU(3)\subset\cdots\subset SU(n)\subset\cdots
  \end{equation*}
  of upper-left-hand-corner embeddings of special unitary groups, whose colimit in \cat{CGp} is trivial.
\end{remark}

\addcontentsline{toc}{section}{References}

\Addresses

\end{document}